\newtheorem{theorem}{Theorem}[section]
\newtheorem{thm}{Theorem}[section]
\newtheorem{lem}[theorem]{Lemma}
\newtheorem{prop}[theorem]{Proposition}
\theoremstyle{definition}
\newtheorem{defn}[theorem]{Definition}
\newtheorem{cor}[theorem]{Corollary}
\theoremstyle{remark}
\newtheorem{remark}[theorem]{Remark}
\numberwithin{equation}{section}
\newcommand\NN{\mathbb N}
\newcommand\RR{\mathbb R}
\newcommand\ZZ{\mathbb Z}
\newcommand\cB{\mathfrak{B}}
\newcommand\tcB{\widetilde{{\mathfrak{B}}}}
\newcommand\cG{\mathcal{G}}
\newcommand\cS{\mathfrak{S}}
\newcommand\tcS{\widetilde{{\mathfrak{S}}}}
\newcommand\cF{\mathfrak{F}}
\newcommand\tcF{\widetilde{\mathfrak{F}}}
\newcommand\cN{\mathcal{N}}
\newcommand\cX{\mathcal{X}}
\newcommand\abs[1]{\left|#1\right|}
\newcommand\set[1]{\left\{{#1}\right\}}
\def\cc{\curvearrowright}
\def\fh{{\mathfrak{v}}}
\def\sM{{\mathbb{M}}}
\def\Stab{{\rm St_\Gamma}}
\def\R{{\mathbb{R}}}
\def\Z{{\mathbb{Z}}}
\def\A{{\mathbb{A}}}
\def\M{{\mathbb{M}}}
\def\E{{\mathbb{E}}}
\def\L{{\mathbb{L}}}
\begin{document}
\title{von-Neumann and Birkhoff ergodic theorems for negatively curved  groups}

\author{Lewis Bowen\footnote{supported in part by NSF grant DMS-0968762, NSF CAREER Award DMS-0954606 and BSF grant 2008274}  ~and Amos Nevo\footnote{supported in part by ISF grant, and BSF grant 2008274}}







\maketitle
\begin{abstract}
We prove maximal inequalities for concentric ball and  spherical shell averages on  a general Gromov hyperbolic group,  in arbitrary probability preserving actions of the group. Under an additional condition, satisfied for example by all groups  acting isometrically and properly discontinuously  on $CAT(-1)$ spaces,  we prove a pointwise ergodic theorem with respect to a sequence of probability measures supported on concentric spherical  shells. 

%

\end{abstract}

\tableofcontents

\section{Introduction}
\subsection{Basic definitions}
Let $\Gamma$ be a countable group and $\{\zeta_r\}_{r>0}$ a family of probability measures on $\Gamma$. Given a pmp (probability-measure-preserving) action $\Gamma \cc (X,m)$, we can associate to  each $\zeta_r$  an operator $\pi_X(\zeta_r)$ on $L^p(X,m)$, acting  by:
$$\pi_X(\zeta_r)(f)(u) = \sum_{g\in \Gamma} \zeta_r(g) f(g^{-1}u).$$
We also consider the associated maximal function:
$$\M_\zeta[f](u):= \sup_{r>0}\pi_X( \zeta_r)(|f|).$$
We will usually suppress $\pi_X$ from the notation and write simply $\pi_X(\zeta_r)f=\zeta_r f$.  
Let us recall the following definitions : 
\begin{itemize}
\item $\{\zeta_r\}_{r>0}$ satisfies the {\em strong $L^p$ maximal inequality} if there is a constant $C_p>0$ such that $\|\M_\zeta[f]\|_p \le C_p \|f \|_p$ for every $f \in L^p(X,m)$;
\item $\{\zeta_r\}_{r>0}$ satisfies the {\em $L\log L$ maximal inequality} if there is a constant $C_1>0$ such that $\|\M_\zeta[f]\|_{L\log L} \le C_1 \|f \|_1$ for every $f \in L^1(X,m)$;
\item $\{\zeta_r\}_{r>0}$ is a {\em pointwise convergent family in $L^p$} if $\zeta_r(f)$ converges pointwise a.e. for every $f \in L^p(X,m)$;
\item $\{\zeta_r\}_{r>0}$ is a {\em pointwise ergodic family in $L^p$} if $\zeta_r(f)$ converges pointwise a.e. to $\E[f|\Gamma]$, the conditional expectation of $f$ on the sigma-algebra of $\Gamma$-invariant Borel sets (for every $f \in L^p(X,m))$.
\end{itemize}

The purpose of the present work is to establish maximal and pointwise ergodic theorems for natural geometric averages on word hyperbolic groups. Before describing the probability measures we will be interested in let us recall the following definitions. Given a proper left-invariant metric $d$ on $\Gamma$, the {\em Gromov product} of $x,y \in \Gamma$ relative to $z\in \Gamma$ is
$$(x|y)_z := \frac{1}{2}\Big( d(x,z) + d(y,z) - d(x,y)\Big).$$
The pair $(\Gamma, d)$ is a {\em hyperbolic group} if for some $\delta\ge 0$, 
\begin{eqnarray}\label{eqn:gromov}
(x|y)_w \ge \min\{ (x|z)_w, (y|z)_w\} - \delta, \quad  \forall x,y,w,z \in \Gamma.
\end{eqnarray}

Let $I \subset \RR$ be an interval. A map $\gamma:I \to \Gamma$ is a $(\lambda,c)$-quasi-geodesic if $\lambda^{-1}|i-j| - c \le d(\gamma(i),\gamma(j))  \le \lambda |i-j| + c$ for every $i,j$. We say that $(\Gamma,d)$ is {\em uniformly quasi-geodesic} if there exists a constant $c>0$ such that for every pair of elements $x,y \in \Gamma$ there exists a $(1,c)$-quasi-geodesic from $x$ to $y$. 

\subsection{Statement of main results}
Our first result concerns maximal inequalities for radial averages.  For $r>0$, let $\beta_r$ be probability measure on $\Gamma$ which is uniformly distributed on the ball $B(e,r)$ of radius $r$ centered at the identity. In other words, $\beta_r(g)=|B(e,r)|^{-1}$ if $g \in B(e,r)$ and $\beta_r(g)=0$ otherwise.  Similarly, for a fixed $a > 0$, we denote by $\sigma_{r,a}$ the uniform probability measure on the spherical shell $S_{r,a}(e)=\set{g\in \Gamma\,;\, r-a <  \abs{g}\le r+a}$. Finally we let $\mu_{r,a}=\frac{1}{r}\int_0^r \sigma_{s,a} ds$ be the uniform averages of the spherical shell measures. Our main maximal inequality is as follows. 

\begin{thm}\label{thm:maximal0}
Let $(\Gamma,d)$ be a non-elementary uniformly quasi-geodesic hyperbolic group. Then the family of ball averages $\{\beta_r\}_{r>0}$ satisfies the $L\log L$-maximal inequality and the strong $L^p$ maximal inequality for every $p>1$. The same holds for the families $\set{\sigma_{r,a}}_{r > 0}$ and $\set{\mu_{r,a}}_{r > 0}$, provided $a$ is larger than a fixed constant depending only on $\Gamma,d)$.  

\end{thm}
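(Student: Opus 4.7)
The plan is to first apply the Calder\'on--Coifman--Weiss transfer principle to reduce each claimed maximal inequality, in an arbitrary probability-measure-preserving action $\Gamma \cc (X,m)$, to the corresponding statement for the convolution operators $f \mapsto \zeta_r \ast f$ acting on $\ell^p(\Gamma)$ with counting measure. This reduction is by now standard for countable groups, and once it is carried out the problem becomes a purely geometric--combinatorial estimate on the Cayley graph of $(\Gamma,d)$.

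The heart of the argument is the shell maximal inequality for $\{\sigma_{r,a}\}_{r>0}$ when $a$ exceeds a threshold $a_0 = a_0(\delta,c)$ depending only on the hyperbolicity constant $\delta$ and the quasi-geodesic constant $c$. The key geometric input is an approximate factorization of shell measures. Given $g \in S_{r,a}(e)$ and a splitting $r = s + t$, the thin-triangle inequality \eqref{eqn:gromov} together with uniform $(1,c)$-quasi-geodesics guarantee many pairs $(h,h^{-1}g)$ with $h \in S_{s,a'}(e)$ and $h^{-1}g \in S_{t,a'}(e)$ for some $a' = a'(\delta,c)$. Counting such pairs using non-elementarity, which forces purely exponential growth, yields an approximate sub-multiplicativity
\begin{equation*}
\sigma_{r,a}(g) \le C\,(\sigma_{s,a} \ast \sigma_{t,a})(g), \qquad g \in S_{r,a},\ a \ge a_0.
\end{equation*}
Iterating, one approximates $\sigma_{r,a}$ by convolution powers of shell measures of uniformly bounded radius and controlled $\ell^2$ norm, producing exponential decay of $\|\sigma_{r,a}\|_{\ell^2 \to \ell^2}$. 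The strong $L^p$ maximal inequality for $p > 1$ follows via a Nevo--Stein-type square-function argument interpolating this $\ell^2$ decay against the trivial $L^\infty$ bound, and the $L\log L$ inequality from a Calder\'on--Zygmund-type decomposition adapted to exponential growth.

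The ball and averaged-shell cases reduce directly to the shell case. Non-elementarity gives $|B(e,r)| \asymp |S_{r,a}|$ for any fixed $a \ge a_0$ and all sufficiently large $r$. Writing $\beta_r$ as the convex combination $|B(e,r)|^{-1}\sum_{0\le s\le r}|S(e,s)|\,\sigma_s$ and using the exponential concentration of mass on outer spheres yields the pointwise bound $\pi_X(\beta_r)(|f|) \le C\, \M_\sigma[f]$, so $\M_\beta[f] \le C\,\M_\sigma[f]$. By the definition $\mu_{r,a} = \frac{1}{r}\int_0^r \sigma_{s,a}\,ds$, the averaged-shell maximal operator is dominated pointwise by $\M_\sigma$, and its maximal inequalities follow at once.

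The main obstacle is the uniform sub-multiplicativity of shells in the displayed inequality above: on a tree this would be essentially exact, but in a hyperbolic Cayley graph one must absorb additional slack arising from thin-triangle approximation, quasi-geodesic error, and ``corner'' contributions from pairs with large Gromov product at $e$. The choice $a \ge a_0$ is precisely what makes these errors uniformly controllable. Once this geometric estimate is in place, the transfer step and the reductions from the ball and averaged-shell cases are comparatively routine, and the full $L^p$ and $L\log L$ bounds for all three families follow by interpolation.
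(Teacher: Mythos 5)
Your reduction collapses at the very first step. The Calder\'on--Coifman--Weiss transfer principle moves maximal inequalities from $\ell^p(\Gamma)$ convolution operators to arbitrary pmp actions only for \emph{amenable} groups (the proof requires a F\o lner averaging), and a non-elementary hyperbolic group is non-amenable. For such groups the regular representation does not weakly contain the representations arising from general actions, so no bound on $\|\sup_r \zeta_r \ast f\|_{\ell^p(\Gamma)}$ controls $\|\M_\zeta[f]\|_{L^p(X,m)}$. The same obstruction kills the second half of your argument: exponential decay of $\|\sigma_{r,a}\|_{\ell^2\to\ell^2}$ is a statement about the regular representation, whereas in a general action one can have $\pi_X(\sigma_n)\phi=(-1)^n\phi$ for a unit vector $\phi\in L^2_0(X,m)$ (the paper recalls exactly this phenomenon for the free group), so $\|\pi_X(\sigma_{r,a})\|_{L^2_0(X)}$ need not decay at all and there is nothing to interpolate against the $L^\infty$ bound. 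The Nevo--Stein square-function method for free groups does not merely interpolate $\ell^2(\Gamma)$-decay; it exploits the commutative radial algebra generated by $\sigma_1$ and its spectral theory \emph{inside $L^2(X,m)$}, structure that is unavailable for a general quasi-geodesic hyperbolic metric. Your approximate sub-multiplicativity of shells is geometrically reasonable (it is close to what \cite{FN98} uses to get the $L^2$ maximal inequality for word metrics), but it cannot by itself produce the $L^p$ ($p>1$) and $L\log L$ inequalities in arbitrary actions.

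The paper's route avoids representation theory entirely. It works with the Patterson--Sullivan measure on $\partial\Gamma$ and the Maharam extension, shows that the leafwise sets $\cB_r(\xi,t)$ (intersections of balls with horoshells) satisfy the doubling-type \emph{regularity} condition via the volume estimate of Lemma \ref{lem:volume2}, invokes the Wiener-type covering/maximal theorem for measured equivalence relations (Theorem \ref{thm:pointwise-eq}, from \cite{BN2,BN3}, extended to actions with uniformly bounded stabilizers), and then proves in Proposition \ref{prop:maximalkey} that the resulting measures $\kappa^1_{r,a}$ dominate the uniform shell measures, with balls handled as convex combinations of shells. If you want to salvage your outline, the step you must replace is the transfer: you need a mechanism, valid in every pmp action, for converting geometric doubling into a maximal inequality, which is precisely what the equivalence-relation covering argument supplies.
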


Our second result considers the case of word metrics $d_S$, where $S$ is a finite symmetric set of generators for $\Gamma$ and $d_S(g,e)=\abs{g}_S$ is the word length. 
We let $\sigma_n$ denote the uniform probability measure on the sphere $S_n(e)$  of radius $n$ and center $e$, and  $\mu_{n}=\frac{1}{n+1}\sum_{k=0}^n \sigma_{k}$ their uniform averages. We then have 

\begin{thm}\label{thm:word-metrics}  Let $\Gamma$ be a word-hyperbolic group, $S$ a symmetric set of generators.
Then $\mu_{n}$ satisfies the strong maximal inequality in $L^p$, $ 1 < p < \infty$, and in $L\log L$, and is a pointwise (and mean) convergent family in these spaces.
 \end{thm}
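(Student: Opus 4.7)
A word metric $d_S$ is geodesic, hence $(1,0)$-quasi-geodesic, so Theorem \ref{thm:maximal0} applies and yields $L\log L$ and strong $L^p$ maximal inequalities for $\{\mu_{r,a}\}_{r>0}$ once $a\ge a_0(\Gamma,d_S)$. Fix such an $a$. The key step is a pointwise comparison
$$\mu_n(g)\le C\,\mu_{n,a}(g), \qquad g\in\Gamma,\ n\ge 1,$$
for a constant $C=C(\Gamma,S,a)$. Indeed $\mu_n(g)=\mathbf{1}_{|g|\le n}/((n+1)|S_{|g|}|)$, while for $|g|=k\le n$
$$\mu_{n,a}(g)=\frac{1}{n}\int_{\max(0,k-a)}^{\min(n,k+a)}\frac{ds}{|S_{s,a}|}.$$
Non-elementary word-hyperbolic groups have strictly exponential sphere growth (Coornaert), so the ratio $|S_{s,a}|/|S_k|$ is bounded above by a constant depending only on $a$ for $s\in[k-a,k+a]$; this forces the integral to be at least $c_a/((n+1)|S_k|)$, proving the claim. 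Applying the comparison operator-wise to $|f|$ yields $\M_\mu[f]\le C\,\M_{\mu,a}[f]$, so the maximal inequalities transfer from Theorem \ref{thm:maximal0}.

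\textbf{Mean and pointwise convergence.} The next step is mean convergence $\pi_X(\mu_n)f\to\EE[f|\Gamma]$ in $L^2$, which then extends to $L^p$ ($1<p<\infty$) by interpolation with the strong maximal inequality, and to a.e.\ convergence via the Banach principle (mean convergence on a dense subspace together with the maximal inequality gives a.e.\ convergence on all of $L^p$). Orthogonally split $L^2(X,m)=L^2_\Gamma\oplus L^2_0$ into $\Gamma$-invariants and their complement. On $L^2_\Gamma$ the statement is trivial, so what is needed is $\pi_X(\mu_n)f\to 0$ in $L^2$ for $f\in L^2_0$. Since $S$ is symmetric each $\sigma_k$ is self-adjoint, and property (RD) for word-hyperbolic groups (Jolissaint, Haagerup) together with Nevo--Stein-type expressions of $\sigma_k$ via convolutions of $\sigma_1$ yields regular-representation norm bounds $\norm{\pi_{\mathrm{reg}}(\sigma_k)}\le Ck^d\theta^k$ with $\theta<1$, hence $\norm{\pi_{\mathrm{reg}}(\mu_n)}=O(1/n)$. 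This estimate is then transferred to the Koopman representation on $L^2_0$ via spherical function analysis on the hyperbolic Cayley graph.

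\textbf{Main obstacle.} The hardest step is the norm decay $\norm{\pi_X(\mu_n)|_{L^2_0}}\to 0$ in a general pmp action: word-hyperbolic groups need not have property (T), so one cannot invoke an abstract spectral gap, and the decay has to be extracted from the fine geometry of spheres in the Cayley graph (exponential growth, thin triangles, rapid decay). The reason Cesaro averaging succeeds here, even though the bare $\sigma_n$ require the additional hypothesis for pointwise convergence flagged in the introduction, is precisely that averaging over $k=0,\dots,n$ damps out the oscillations of $\sigma_k$, converting regular-representation decay into strong $L^2$ convergence in an arbitrary pmp action.
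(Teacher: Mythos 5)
Your maximal-inequality argument follows the same route as the paper (dominate the discrete averages by the continuous ones covered by Theorem \ref{thm:maximal0} using growth estimates), but note one point: the two-sided sphere estimate $C^{-1}e^{\fh k}\le |S_k(e)|\le Ce^{\fh k}$ that your comparison needs is \emph{not} Coornaert's theorem. Coornaert gives the ball estimate $C^{-1}e^{\fh r}\le |B(e,r)|\le Ce^{\fh r}$, and subtracting consecutive ball volumes does not yield a lower bound on $|S_k(e)|$ when $C^2>e^{\fh}$. The paper has to work for this: it uses the strongly Markov structure of $(\Gamma,S)$ and the Calegari--Fujiwara analysis of the adjacency matrix to upgrade the ball estimate to a sphere estimate. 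Your step ``the ratio $|S_{s,a}|/|S_k|$ is bounded'' silently relies on exactly this nontrivial lower bound for $|S_k|$, so you should either prove it or cite it correctly; with that repaired, this half of the proof is sound and essentially the paper's.

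The convergence half has a genuine gap. You propose to prove $\pi_X(\mu_n)f\to 0$ for $f\in L^2_0$ by establishing $\|\pi_{\mathrm{reg}}(\mu_n)\|=O(1/n)$ via property (RD) and then ``transferring'' this to the Koopman representation on $L^2_0$ ``via spherical function analysis.'' No such transfer exists for a general pmp action: operator-norm bounds in the regular representation control only representations weakly contained in $\pi_{\mathrm{reg}}$, and the Koopman representation on $L^2_0$ of an arbitrary ergodic action of a word-hyperbolic group need not be tempered. Worse, the norm decay you are aiming for is simply false in general: if the action is not strongly ergodic (e.g., many profinite or compact-group actions of a free group), then $L^2_0$ contains almost-invariant unit vectors, so $\|\pi_X(\mu_n)|_{L^2_0}\|=1$ for every $n$ since $\mu_n$ is a probability measure. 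What is true is strong (not norm) convergence $\pi_X(\mu_n)f\to\EE[f|\Gamma]$, but your proposed mechanism cannot produce it, and you correctly identify this as ``the main obstacle'' without actually overcoming it. The paper sidesteps the issue entirely: it quotes the pointwise a.e.\ convergence of $\pi_X(\mu_n)f$ for bounded $f$ from Bufetov--Khristoforov--Klimenko \cite{BKK11} (proved by Markov-coding/martingale methods, not spectral theory), and then uses the maximal inequality together with density of $L^\infty$ in $L^p$ (the Banach principle) to get a.e.\ convergence in $L^p$ and $L\log L$, with mean convergence following by dominated convergence. To complete your proof you would need either to import such an external convergence result on a dense subspace, or to supply a genuinely new argument for mean convergence that does not pass through operator norms on $L^2_0$.
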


We remark that Theorem \ref{thm:word-metrics} extends  the  $L^2$-maximal inequality  for $\mu_{n}$ established in \cite{FN98},  improves on the mean and pointwise convergence  in $L^p$, $1 < p< \infty$ proved for $\mu_n$ in  \cite[Corollary 1]{BKK11} and \cite[Thm. 1]{BK12}, and generalizes the pointwise convergence for bounded functions established for $\mu_n$  under an additional assumption also  in \cite{PS11}. 

Let us turn now to the problem of pointwise (and mean) convergence of the balls and spherical shell averages on a hyperbolic group $(\Gamma,d)$. We will require an additional assumption on the group, and we will comment on its prevalence and on the optimality of the ergodic theorem it gives rise to after stating the theorem.

We will say that the {\em Gromov boundary coincides with the horofunction boundary} if for every point $\xi$ in the Gromov boundary of $(\Gamma,d)$, every sequence $\{x_i\}_{i=1}^\infty \subset \Gamma$ converging to $\xi$ and every $y \in \Gamma$, the limit
$$h_\xi(y):=\lim_{i\to\infty} d(x_i,y) - d(x_i,e)$$
exists and depends only on $\xi, y$. Our main pointwise ergodic theorem is:

\begin{thm}\label{thm:main}
Let $(\Gamma,d)$ be a non-elementary uniformly quasi-geodesic  hyperbolic group whose Gromov boundary coincides with its horofunction boundary. Then for each $a>0$ larger than a fixed constant depending only on $(\Gamma,d)$,  there exists a family $\{\kappa_r\}_{r>0}$ of probability measures on $\Gamma$ such that
\begin{enumerate}
\item  each $\kappa_r$ is supported on the spherical shell $S_{r,a}(e)=\{g\in \Gamma:~ d(e,g) \in [r-a,r+a]\}$,
\item $\{\kappa_r\}_{r>0}$ is a pointwise (and mean) ergodic family in $L^p$ for every $p>1$ and in $L\log L$. 
\end{enumerate}
\end{thm}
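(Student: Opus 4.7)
The plan is to construct $\kappa_r$ as weights on the spherical shell $S_{r,a}(e)$ determined by the shadow masses of a Patterson--Sullivan type conformal density on the Gromov boundary, and then to deduce the pointwise ergodic theorem from Theorem~\ref{thm:maximal0} together with a mean ergodic theorem proved via approximate $\Gamma$-invariance.

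First, under the horofunction hypothesis the horofunctions $h_\xi$ extend continuously to $\xi \in \partial\Gamma$, and one builds a Patterson--Sullivan type measure $\nu$ on $\partial\Gamma$ satisfying $d(g_*\nu)/d\nu \asymp e^{-v\, h_\xi(g^{-1})}$, where $v$ is the exponential growth rate of $(\Gamma,d)$. Writing $O(g) \subset \partial\Gamma$ for the shadow of the ball of radius $a$ about $g$ seen from $e$, a shadow lemma (valid under the horofunction condition once $a$ exceeds a constant depending on $(\Gamma,d)$) gives $\nu(O(g)) \asymp e^{-v|g|}$ uniformly in $g$. I then define
\[
\kappa_r(g) = \frac{\nu(O(g))}{Z_r}, \qquad Z_r = \sum_{h \in S_{r,a}(e)} \nu(O(h)),
\]
for $g \in S_{r,a}(e)$. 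This is supported on the shell as required, and the shadow lemma together with $|S_{r,a}(e)| \asymp e^{vr}$ yields $\kappa_r(g) \le C\, \sigma_{r,a}(g)$ pointwise; hence the maximal inequality for $\kappa_r$ in $L^p$ ($p>1$) and in $L\log L$ follows directly from the corresponding statements for $\sigma_{r,a}$ in Theorem~\ref{thm:maximal0}.

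For mean convergence I reduce, via standard density arguments in the Koopman representation, to proving that $\kappa_r$ is asymptotically $\Gamma$-invariant, i.e. $\|g\kappa_r - \kappa_r\|_{\ell^1(\Gamma)} \to 0$ for every $g \in \Gamma$. Translating by $g$ sends the shadow $O(h)$ to $g \cdot O(h)$, while the conformal identity expresses the associated Radon--Nikodym cocycle as approximately $\exp(-v\, h_\xi(g^{-1}))$ for $\xi \in O(h)$. Because the horofunction hypothesis makes $\xi \mapsto h_\xi(g^{-1})$ a continuous function on $\partial\Gamma$, this factor is almost constant across each shadow $O(h)$ once $|h|$ is large, and summing over the shell reduces the $\ell^1$ defect between $g\kappa_r$ and $\kappa_r$ to boundary terms that vanish as $r \to \infty$.

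The pointwise ergodic theorem then follows by the standard Banach principle: mean convergence on the dense subspace $L^2 \cap L^\infty$ combined with the $L^p$ and $L\log L$ maximal inequalities yields pointwise and mean convergence in $L^p$ ($p>1$) and in $L\log L$, with limit $\E[f|\Gamma]$. I expect the main obstacle to be the approximate invariance step: it is precisely the horofunction hypothesis that guarantees the shadows $\{O(g):g\in S_{r,a}(e)\}$ form a cover of $\partial\Gamma$ with bounded overlap multiplicity and that the conformal cocycle extends continuously to $\partial\Gamma$, so that the $g$-translation estimate can be made uniform over the shell. Without this hypothesis both the shadow lemma and the asymptotic invariance calculation break down, and the construction is genuinely tied to the assumption.
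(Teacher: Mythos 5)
Your construction of $\kappa_r$ from shadow masses and the domination $\kappa_r\le C\,\sigma_{r,a}$ are fine, and they do transfer the maximal inequalities from Theorem \ref{thm:maximal0}. The fatal gap is the convergence step. You propose to prove mean convergence by showing $\|\delta_g * \kappa_r-\kappa_r\|_{\ell^1(\Gamma)}\to 0$ for every $g\in\Gamma$. This is exactly Reiter's condition (P1): the existence of such a sequence of probability measures is \emph{equivalent} to amenability of $\Gamma$, and a non-elementary hyperbolic group is nonamenable. So no choice of weights on the shells can make this quantity tend to zero. Concretely, translating by $g$ carries the support $S_{r,a}(e)$ to a shell centered at $g$; by exponential growth a definite proportion of the mass (bounded below uniformly in $r$) lands on group elements where $\kappa_r$ vanishes, and the continuity of $\xi\mapsto h_\xi(g^{-1})$ only controls the \emph{ratio} of weights at points common to both supports — it cannot repair the failure of the supports themselves to be almost invariant. (There is also a secondary slip: the Banach principle needs pointwise, not mean, convergence on a dense subspace; the standard fix uses bounded coboundaries, but that again requires the very $\ell^1$-invariance that cannot hold.)

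This obstruction is precisely why the paper never works with approximate invariance in $\ell^1(\Gamma)$. The asymptotic invariance that actually holds (Theorem \ref{thm:asymptoticinvariance}) is a \emph{leafwise} F{\o}lner property of the subset functions $\cS_{r,a}(\xi,t)$ along the equivalence relation induced by the Maharam extension of the boundary action $\Gamma\cc(\partial\Gamma,\nu)$ — an \emph{amenable} equivalence relation, even though $\Gamma$ is not amenable — and the horofunction hypothesis is what makes the leafwise F{\o}lner estimate (Proposition \ref{prop:key}) work. The probability measures on $\Gamma$ are then obtained by integrating these leafwise averages over $\partial\Gamma\times[0,T)_\L$, and the passage back to a pointwise \emph{ergodic} theorem on $\Gamma$ (Theorems \ref{thm:III_1} and \ref{thm:III_lambda}) requires identifying the limit, which in turn uses weak mixing of the boundary action together with its type and stable type. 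None of this machinery can be bypassed by an invariance argument on $\Gamma$ itself; it is the core of the proof rather than a technical detour.
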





\subsection{Comments on ergodic theorems for radial averages}

As to the hypotheses of Theorem \ref{thm:main}, let us note that the coincidence of the horofunction boundary and the Gromov boundary is a common phenomenon, but not a universal one. Let us give two examples where it is satisfied, and one where it may fail.  
\begin{enumerate}
\item {\bf CAT(-1) spaces}. Suppose $\Gamma$ acts properly discontinuously by isometries on a CAT(-1) space $(X,d_X)$. For $x \in X$ (with trivial stability group in $\Gamma$) define the metric $d$ on $\Gamma$ by $d(g,g'):=d_X(gx,g'x)$. For a proof of the coincidence of the boundaries for this metric see \cite[Chapter II.8, Theorem 8.13 and Chapter III.H]{BH99}.
\item {\bf The Green metric}. If $\zeta$ is a finitely supported symmetric probability measure on $\Gamma$ whose support generates $\Gamma$ then the {\em Green metric} induced by $\zeta$ is defined as follows. Let $X_1,X_2,\ldots$ be a sequence of independent, identically distributed random variables each with law $\zeta$. Let $Z_0=e$ and $Z_n=X_1\cdots X_n$ (for $n\ge 1$) be the random walk on $\Gamma$ induced by $\zeta$. For $g,g' \in \Gamma$, let $d_\zeta(g,g') = -\log( p(g,g'))$ where $p(g,g')$ is the probability that $gZ_n=g'$ for some $n\ge 0$. This is the Green metric induced by $\zeta$. The informative paper \cite{Bj10} explains why the horofunction boundary of $(\Gamma, d_\zeta)$ coincides with its Gromov boundary. This is based on work of Ancona \cite{An87, An88, An90} showing that the Martin boundary of $(\Gamma,d_\zeta)$ equals its Gromov boundary. In \cite{BHM11} it is shown that the Green metric is uniformly quasi-geodesic and Gromov hyperbolic if $\Gamma$ is word hyperbolic. It is obviously proper and left-invariant. It follows that every non-elementary finitely generated word hyperbolic group has a metric $d$ satisfying the hypotheses of Theorem \ref{thm:main}.
\item {\bf Word metrics}. By \cite{CP01}, the horofunction boundary of $\Gamma$ with an arbitrary word metric admits a canonical finite-to-1 $\Gamma$-equivariant map onto the Gromov boundary. However, this map need not be a homeomorphism. 
For example, if $\Gamma=\Gamma_0 \times F$ where $\Gamma_0$ is word hyperbolic and $F$ is a finite nontrivial group then for any word metric on $\Gamma$ that is induced by a generating set of the form $\{(g, e_F):~g\in S\} \cup \{(e_{\Gamma_0},f):~f\in F\}$ where $S$ is a generating set for $\Gamma_0$,  the horofunction boundary does not coincide with the Gromov boundary.
\end{enumerate}

Let us now comment briefly on the optimality of Theorem   \ref{thm:main} in the context of ergodic theorems for ball averages on hyperbolic groups. To begin with, recall the well-known fact  that the ball averages  on the free group, defined with respect to a free set of generators, actually  fail to converge, in general. 
Indeed, whenever $L^2_0(X,m)$ contains an eigenfunction $\phi$ with $\pi_X(\sigma_n)\phi=(-1)^n \phi$, clearly the sequences $\pi_X(\sigma_n )\phi$ and $\pi_X(\beta_n) \phi$ do not converge, and the same holds true for spherical shells. Thus even when the horofunction boundary and the Gromov boundary coincide, as in the case of the free group acting on its Cayley tree, the ball averages (and the spherical shell averages)  do not satisfy the ergodic theorem.  This  is not an isolated fact, and for example in the groups $\ZZ_p\ast \ZZ_q$ where $p \neq q$,  the ergodic theorem likewise fails for ball (and spherical shell) averages, see the discussion following \cite[Thm. 10.7]{Ne05}.  

 Let us now point out however that in the cases just mentioned there exists a  sequence of probability measures supported on the spherical shells $S_{n,1}(e)$ which is indeed  a pointwise ergodic sequence. It is given by $\sigma_{n}^\prime=\frac12\left(\sigma_n+\sigma_{n+1}\right) $ in these examples, see \cite{Ne94a}. By the previous comment, the probability measures in question supported on the shell $S_{n,1}(e)$,  are necessarily non-uniform in the examples mentioned. Thus Theorem \ref{thm:main} gives essentially the optimal result in this  case, namely pointwise convergence for probability measures supported on spherical shells.   

Finally, let us point out that in some cases, the ball averages associated with a hyperbolic metric $d$ on $\Gamma$ do in fact form a pointwise ergodic sequence in $L^p$, $ 1 < p <\infty$. Indeed, let $G$ be connected almost simple real Lie group of real rank one, and $\Gamma$ a uniform lattice in $G$. Let $\cal{S}$ denote the symmetric space associated with 
$G$, and fix a point $p\in \cal{S}$ whose stabilizer in $\Gamma$ is trivial. Let $D_{\cal{S}}$ be the $G$-invariant Riemannian metric on symmetric space, and define $d_{\cal{S}}(g,h)=D_{\cal{S}}(gp,hp)$ for $g,h\in \Gamma$. Then  $d_{\cal{S}}$ is a hyperbolic metric on $\Gamma$, and the associated ball averages $\beta_r$ are a pointwise (and mean) ergodic family in $L^p$, $1 < p < \infty$. This fact appears in \cite{GN10}, and its proof depends on detailed information regarding  the unitary representation  theory of the simple Lie group $G$.

\subsection{A brief sketch}


Let $\partial\Gamma$ denote the Gromov boundary of $(\Gamma,d)$. Via the Patterson-Sullivan construction, there is a quasi-conformal probability measure $\nu$ on $\partial\Gamma$. So there are constants $C,\fh>0$ such that
$$C^{-1}\exp(-\fh h_\xi(g^{-1})) \le \frac{d\nu \circ g}{d\nu}(\xi) \le C \exp(-\fh h_\xi(g^{-1}))$$
for every $g\in \Gamma$ and a.e. $\xi \in \partial \Gamma$ where
$$h_\xi(g^{-1}):= \inf \liminf_{n\to\infty} d(x_i,g^{-1}) - d(z_i,e)$$
where the infimum is over all sequences $\{x_i\} \subset \Gamma$ converging to $\xi$. 

The {\em type} of the action $\Gamma \cc (\partial \Gamma,\nu)$ encodes the essential range of the Radon-Nikodym derivative. In \cite{Bo12}, it is shown that this type is $III_\lambda$ for some $\lambda \in (0,1]$. If $\lambda \in (0,1)$ then we set
$$R_\lambda(g,\xi)= - \log_\lambda \left( \frac{d \nu \circ g}{d\nu}(\xi) \right).$$ Using standard results, it can be shown that then we can choose $\nu$ so that $R_\lambda(g,\xi) \in \ZZ$ for every $g$ and a.e. $\xi$. When $\lambda=1$, we set $R_1(g,\xi) = +\log \left( \frac{d \nu \circ g}{d\nu}(\xi) \right)$. 

In order to handle each case uniformly, we set $\L=\RR$ if $\lambda=1$ and $\L=\ZZ$ if $\lambda \in (0,1)$. Then we let $\Gamma$ act on $\partial\Gamma \times \L$ by
$$g(\xi,t) = (g \xi, t-R_\lambda(g,\xi)).$$
This action preserves the measure $\nu \times \theta_\lambda$ where $\theta_1$ is the measure on $\RR$ satisfying $d\theta_1(t) = \exp( t) dt$ and, for $\lambda \in (0,1)$, $\theta_\lambda$ is the measure on $\ZZ$ satisfying $\theta_\lambda(\{n\}) = \lambda^{-n}$. 

Given $a,b \in \L$, let $[a,b]_\L \subset \L$ denote the interval $\{a,a+1,\ldots, b\}$ if $\L=\Z$ and $[a,b] \subset \RR$ if $\L=\RR$. Similar considerations apply to open intervals and half-open intervals.

For any real numbers $r, T>0$, and $(\xi,t) \in \partial\Gamma\times [0,T)_L$, let 
$$\Gamma_r(\xi,t) = \{g \in \Gamma:~ d(g,e) - h_\xi(g)-t \le r, ~g^{-1}(\xi,t) \in \partial \Gamma \times [0,T)_\L \}$$
and
$$\cB^{}_r(\xi,t) := \{ g^{-1} (\xi,t):~ g\in \Gamma_r(\xi,t) \}.$$
$\Gamma_r(\xi,t)$ is approximately equal to the intersection of the ball of radius $r$ centered at the identity with the horoshell $\{g \in \Gamma:~ -t\le h_\xi(g) \le T-t\}$.  Of course, $\Gamma_r$ and $\cB_r$ depend on $T$, but we leave this dependence implicit. 


Our first main technical result is that if $T$ is sufficiently large then $\cB$ is {\em regular}: there exists a constant $C>0$ such that for every $r>0$ and a.e. $(\xi,t) \in \partial\Gamma \times [0,T)_\L$,
$$| \cup_{s\le r} \cB^{-1}_s \cB_r (\xi,t)| \le C |\cB_r(\xi,t)|.$$
Theorem \ref{thm:maximal0} now follows from an extension of the general results in \cite{BN2, BN3}. 
The idea is that we can use the regularity of the sets $\cB_r$ and prove a maximal inequality for them, and thus for the equivalence relation on $\partial\Gamma\times [0,T)_\L$ given by the intersection of the $\Gamma$-orbits with this subset. We can then average this maximal inequality over $\partial \Gamma$ to obtain a maximal inequality for the resulting family of probability measures on $\Gamma$. By geometric arguments, the family we obtain is sufficiently close to being uniform averages on balls that this implies a maximal inequality for the uniform averages on balls.

The results of \cite{BN2,BN3} do not directly apply because the action of $\Gamma$ on its boundary might not be essentially free. However, this action has uniformly bounded stabilizers. Using this hypotheses we generalize the needed theorems of \cite{BN2,BN3} in \S \ref{sec:er}-\ref{sec:general}.

Next we let $\cS^{}_a=\{\cS^{}_{r,a}\}_{r>0}$ be the family of subset functions on $\partial\Gamma\times [0,T)_\L$ defined by 
$$\cS^{}_{r,a}(\xi,t) := \cB_r(\xi,t) \setminus \cB_{r-a}(\xi,t)$$
and observe that $\cS_a$ is also regular if $a,T>0$ are sufficiently large. 

Our second main technical result is that $\cS_a$ is {\em asymptotically invariant} (modulo a minor technical issue) assuming that the horofunction boundary coincides with the Gromov boundary. To explain, we let $E$ denote the equivalence relation on $\partial\Gamma \times [0,T)_\L$ given by $(\xi,t)E(\xi',t')$ if there exists $g \in \Gamma$ such that $ (\xi,t)=g(\xi',t')$. The {\em full group} of $E$ is the group of all (equivalence classes of) Borel automorphisms on $\partial\Gamma \times [0,1]$ with graph contained in $E$, where two such automorphisms are equivalent if they agree almost everywhere. It is denoted by $[E]$. A subset $\Phi \subset [E]$ {\em generates $E$} if for a.e. $(\xi,t) \in \partial\Gamma \times [0,T)_\L$ and every $(\xi',t')$ with $(\xi,t)E(\xi',t')$ there is an element $\phi \in \langle \Phi \rangle$ (the subgroup generated by $\Phi$) such that $\phi(\xi,t)=(\xi',t')$. Finally, $\cS_a$ being asymptotically invariant means that there exists a countable generating set $\Phi \subset [E]$ such that 
$$\lim_{r \to \infty} \frac{ |\cS_{r,a}(\xi,t) \vartriangle \phi(\cS_{r,a}(\xi,t))|}{|\cS_{r,a}(\xi,t)|} = 0$$
for every $\phi \in \Phi$ and a.e. $(\xi,t)$. 

Using asymptotic invariance, it now follows from general results of \cite{BN2,BN3} (as generalized in \S \ref{sec:er}-\ref{sec:general}) that there is a pointwise {\em convergent} family $\{\kappa_r\}_{r>0}$ of probability measures on $\Gamma$ and a constant $a>0$ so that each $\kappa_r$ is supported on the annulus $\{g\in \Gamma:~ d(e,g) \in [r-a,r+a]\}$. However, this is not the end of the proof because at this stage we only know that for any pmp action $\Gamma \cc (X,m)$ and any $f \in L^p(X,m)$ that $\kappa_r(f)$ converges almost everywhere. We have not yet identified what it converges to!

The issue is that even if $\Gamma \cc (X,m)$ is ergodic, it does not necessarily follow that the product action $\Gamma \cc (X \times \partial\Gamma\times \L, m \times \nu \times \theta_\lambda)$ is ergodic. To resolve this, first we show that $\Gamma \cc (\partial\Gamma,\nu)$ is weakly mixing (so $\Gamma \cc (X\times \partial\Gamma, m \times \nu)$ is ergodic). This uses the fact that Poisson boundary actions are weakly mixing \cite{AL05} and that the action is equivalent to a Poisson boundary action \cite{CM07}. From \cite{Bo12}, it follows that $\Gamma \cc (\partial\Gamma,\nu)$ has type $III_\rho$ and stable type $III_\tau$ for some $\rho, \tau \in (0,1]$. From this it follows that the natural cocycle $\alpha:\tilde{E} \to \Gamma$ (where $\tilde{E}$ is the equivalence relation on $X \times \partial \Gamma \times [0,T)_\L$) is weakly mixing relative to a certain compact group. This is ultimately what is needed to invoke Theorems \ref{thm:III_1}, \ref{thm:III_lambda} (which generalize \cite{BN2,BN3}) and thereby complete the proof.





{\bf Organization of the paper}. \S \ref{sec:er} discusses maximal and ergodic theorems for measured equivalence relations. This is used in \S \ref{sec:general} to obtain some general ergodic theorems which will be used to prove the main results. Then \S \ref{sec:Gromov} reviews Gromov hyperbolic groups and sets some notation. \S \ref{sec:doubling} establishes the regularity of the averaging sets and proves Theorem \ref{thm:maximal0} and Theorem  \ref{thm:word-metrics}. In \S \ref{sec:invariance}, we prove asymptotic invariance of the averaging sets. The last section \S \ref{sec:main} uses asymptotic invariance to complete the proof of Theorem \ref{thm:main}.

{\bf Convention}. We have not attempted to produce explicit estimates for the constants appearing in the statements of the main results, and throughout the paper we use the "variable constant convention", namely in different occurrences of a constant  (even within the same argument) the values it assumes may be different.


\section{Equivalence relations and ergodic sequences}\label{sec:er}

\subsection{An ergodic theorem for equivalence relations}
The purpose of this section is to review and generalize the main theorems of \cite{BN2,BN3} so that we can later apply them to Gromov hyperbolic groups. 
To this end, let $(B,\nu)$ be a standard probability space and $E \subset B\times B$ a discrete measurable equivalence relation. Let $[E]$ denotes the {\em full group} of $E$, namely  the group of all measurable automorphisms of $B$ with graph contained in $E$ (discarding a null set).   We assume that $\nu$ is $E$-invariant, namely that $\phi_*\nu=\nu$ for every $\phi \in [E]$.

We will obtain  theorems for $E$ first and then push them forward via a cocycle $E \to \Gamma$. We begin by discussing ergodic theorems for $E$, which utilize finite subset functions, defined next.

Let $2^B$ be the set of all subsets of $B$. A (finite)  {\em subset function} for $E$ is a map $\cF:B \to 2^B$ such that $\cF(\xi)$ is finite and $\cF(\xi) \subset [\xi]_E=\{\eta \in B:~(\xi,\eta)\in E\}$ for almost every $\xi$.  $\cF$ is called measurable if the set $\{(r,\xi,\eta) \in \R \times B \times B:~ \eta\in \cF_r(\xi)\}$ is measurable. 

We let $\cF^{-1}$ be the subset function defined by
$$\cF^{-1}(\eta) = \{\xi \in B:~ \eta \in \cF(\xi)\}.$$
If $\cF,\cG$ are two subset functions on $B$ then their product is defined by
$$\cF\cG(\xi) = \bigcup_{\eta \in \cG(\xi)} \cF(\eta).$$
 The composition,  union, intersection and relative complement of two subsets functions (defined in the obvious way)  constitute subset functions in their own right, namely they are measurable and finite for almost every $\xi\in B$.

We will be interested in averaging over such subset functions. First, let $\alpha:E \to \textrm{Aut}(X,m)$ be a measurable cocycle into the automorphism group of a standard Borel space. In particular, we require $\alpha(\xi,\eta)\alpha(\eta,\xi^\prime)=\alpha(\xi,\xi^\prime)$ (for a.e. $\xi\in B$ and every $\eta,\xi^\prime \in [\xi]_E$). Let $E_\alpha$ be the induced equivalence relation on $B\times X$. So $(\xi,u)E_\alpha(\xi',u')$ if and only if $\xi E\xi'$ and $\alpha(\xi',\xi)u=u'$.

Let $f \in L^p(B\times X, \nu \times m)$ for some $p$. Given a family of $\cF=\{\cF_r\}_{r>0}$ of subset functions for $E$, we consider the averages
$$\A[f|\cF_r](\xi,u):= |\cF_r(\xi)|^{-1} \sum_{(\xi') \in \cF_r(\xi,u)} f(\xi', \alpha(\xi',\xi)^{-1}u')$$
and the maximal function
$$\M[f|\cF](\xi,u) := \sup_{r>0} \A[|f| | \cF_r](\xi,u).$$
Our assumption that the equivalence classes $[\xi]_E$ are almost always countable, and the subset  $\cF(\xi)\subset [\xi]_E$ almost always finite implies that the maximal function is measurable. We say : 
\begin{itemize}
\item $\cF$ satisfies the {\em weak $(1,1)$-type maximal inequality} if there is a constant $C>0$ such that for all $t>0$ and all $f\in L^1(B\times X)$,
$$\nu \times m(\{ (\xi,u):~ \M[f |\cF](\xi,u)>t\}) \le C\frac{ \|f \|_1}{t};$$
\item $\cF$ satisfies the {\em strong $p$-type maximal inequality} if there is a constant $C_p>0$ such that $\| \M[f|\cF] \|_p \le C_p \|f \|_p$ for every $f \in L^p(B\times X)$;
\item $\cF$ is a {\em pointwise ergodic family in $L^p$} if for every $f\in L^p(B\times X)$, $\A[f|\cF_r]$ converges pointwise a.e. to $\E[f| E_\alpha]$, the conditional expectation of $f$ on the $\sigma$-algebra of $E_\alpha$-saturated sets.
\end{itemize}

Next we provide conditions which imply the conditions above.

A family $\cF=\{\cF_r\}_{r>0}$ of subset functions on $B$ is {\em regular} if there exists a constant $C>0$ (called a {\em regularity constant}) such that for every $r>0$ and a.e. $\xi\in B$,
$$\left| \bigcup_{s\le r} \cF^{-1}_s\cF^{}_r(\xi) \right| \le C |\cF^{}_r(\xi)|.$$

A subset $\Phi \subset [E]$ {\em generates $E$} if for a.e. $\xi\in B$ and every $\eta \in [\xi]_E$, there is $\phi \in \langle \Phi \rangle$ (the subgroup generated by $\Phi$) such that $\phi(\xi)=\eta$. The family $\cF$ is {\em asymptotically invariant} if there exists a countable set $\Phi \subset [E]$ which generates $E$ such that 
$$\lim_{r\to\infty} \frac{ |\cF_r(\xi) \vartriangle \phi(\cF_r(\xi))| }{|\cF_r(\xi)|} = 0\quad \textrm{ for a.e. $\xi$}, \forall \phi \in \Phi.$$
We now recall the following, which is part of \cite[Theorems 2.4-2.6]{BN2}.
\begin{thm}\label{thm:pointwise-eq}
If $\cF$ is regular then it satisfies the weak $(1,1)$-type maximal inequality and the strong $p$-type maximal inequality for all $p>1$. If, in addition, it is asymptotically invariant then it is a pointwise ergodic family in $L^p$ (for every $p\ge 1$).
\end{thm}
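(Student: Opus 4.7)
The plan is to implement the Vitali--Calder\'on strategy at the level of the measured equivalence relation $E$, following the template of \cite{BN2, BN3}. First, for the maximal inequalities, the regularity of $\cF$ is tailored to provide a Vitali-type covering lemma: given a finite collection $\{\cF_{r_i}(\xi_i)\}_{i\in I}$, one orders by decreasing $r_i$ and greedily extracts a pairwise disjoint subcollection $\{\cF_{r_{i_k}}(\xi_{i_k})\}_k$ whose enlargements $\bigcup_{s\le r_{i_k}} \cF_s^{-1}\cF_{r_{i_k}}(\xi_{i_k})$ cover the original union, with regularity bounding the total cardinality of the enlargements by a constant multiple of that of the disjoint subfamily. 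Applying this to the super-level set $\{\M[f|\cF]>t\}$ in the standard way yields the weak $(1,1)$ inequality on $(B,\nu)$; Marcinkiewicz interpolation with the trivial $L^\infty$-contractivity of $\A[\cdot|\cF_r]$ then gives the strong $L^p$-inequality for each $p>1$.

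To transfer these inequalities from $(B,\nu)$ to $(B\times X,\nu\times m)$, we use that $\alpha$ takes values in $\textrm{Aut}(X,m)$, so the $\alpha$-twisted averages preserve $L^p$-norms fiberwise in the $X$-variable; a Fubini-type argument then reduces the maximal inequality on $B\times X$ to the one already established on $B$.

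For the pointwise ergodic theorem we invoke the Banach principle: the maximal inequality implies that the set of $f\in L^p(B\times X)$ for which $\A[f|\cF_r]$ converges a.e.\ is closed in $L^p$, so it suffices to exhibit convergence on a dense subspace. On $E_\alpha$-invariant functions, convergence to $\E[f|E_\alpha]$ is trivial. On the orthogonal complement, asymptotic invariance gives that for every $\phi\in\Phi$ and every bounded $f$,
$$\bigabs{\A[f|\cF_r](\xi,u) - \A[f|\cF_r]\bigl(\phi\xi,\alpha(\phi\xi,\xi)u\bigr)} \le \frac{|\cF_r(\xi)\,\vartriangle\,\phi(\cF_r(\xi))|}{|\cF_r(\xi)|}\cdot\|f\|_\infty \longrightarrow 0,$$
so any a.e.\ limit is $\Phi$-invariant and hence, by the generating hypothesis, $E_\alpha$-invariant; a mean ergodic argument then identifies the limit with $\E[f|E_\alpha]$.

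The principal obstacle I expect is the density step: one must show that the sum of the $E_\alpha$-invariants and the closed span of coboundaries $\{f - f\circ\tilde\phi:\phi\in\Phi\}$ is dense in $L^p(B\times X)$, where $\tilde\phi$ denotes the $\alpha$-twisted lift of $\phi$. This is essentially a von~Neumann-type mean ergodic theorem for $E_\alpha$, and is the step where the countable generation of $E$ by $\langle\Phi\rangle$ is used essentially; some care is also required because the action of $\Gamma$ on the boundary need not be essentially free, which is precisely the reason for generalizing \cite{BN2,BN3} rather than quoting them directly.
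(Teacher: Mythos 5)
The paper itself offers no proof of this statement: it is quoted as ``part of [Theorems 2.4--2.6]'' of \cite{BN2}, so your attempt has to be measured against the argument in that reference. Your overall architecture is the right one and matches it: the regularity constant is exactly what makes a greedy Wiener--Vitali selection work leafwise (every discarded $\cF_{r_i}(\xi_i)$ meeting a selected $\cF_{r_{i_k}}(\xi_{i_k})$ with $r_i\le r_{i_k}$ has its base point in $\bigcup_{s\le r_{i_k}}\cF_s^{-1}\cF_{r_{i_k}}(\xi_{i_k})$, whose cardinality regularity controls), the counting-measure estimate on classes integrates to the weak $(1,1)$ inequality because $\nu$ is $E$-invariant, interpolation with the trivial $L^\infty$ bound gives the strong $L^p$ inequality, and the transfer to $B\times X$ is immediate because the lifted subset functions $\cF_r^\alpha(\xi,u)=\{(\xi',\alpha(\xi',\xi)^{-1}u):\xi'\in\cF_r(\xi)\}$ are regular for $E_\alpha$ with the same constant. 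The Banach principle plus density of invariants and bounded coboundaries is likewise the intended route for the pointwise statement, and you are right that countable generation by $\Phi$ is what makes the von Neumann decomposition identify the orthocomplement of the coboundaries with the $E_\alpha$-invariant functions.

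The genuine gap is in your use of asymptotic invariance. Your displayed inequality compares $\A[f|\cF_r]$ at $(\xi,u)$ with its value at $\tilde\phi(\xi,u)$; the latter is an average over (the lift of) $\cF_r(\phi\xi)$, whereas asymptotic invariance controls the symmetric difference of $\cF_r(\xi)$ with $\phi(\cF_r(\xi))=\{\phi\eta:\eta\in\cF_r(\xi)\}$. These are different sets: nothing in the hypotheses relates $\cF_r(\phi\xi)$ to $\cF_r(\xi)$, and one can build asymptotically invariant families (e.g.\ orbit intervals of a $\ZZ$-action whose orientation depends measurably on the base point) for which $\cF_r(\xi)$ and $\cF_r(\phi\xi)$ are nearly disjoint, so the displayed bound is false in general. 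Moreover the inference ``any a.e.\ limit is $\Phi$-invariant'' presupposes the convergence you are trying to prove. The correct use of the hypothesis is at a \emph{fixed} point: for bounded $g$ and $\phi\in\Phi$, the cocycle identity gives
$$\A[g\circ\tilde\phi\,|\,\cF_r](\xi,u)=|\cF_r(\xi)|^{-1}\sum_{\eta\in\cF_r(\xi)}g\bigl(\phi\eta,\alpha(\phi\eta,\xi)^{-1}u\bigr),$$
an average of $g$ over the lift of $\phi(\cF_r(\xi))$, whence
$$\bigabs{\A[g-g\circ\tilde\phi\,|\,\cF_r](\xi,u)}\le 2\,\|g\|_\infty\,\frac{|\cF_r(\xi)\vartriangle\phi(\cF_r(\xi))|}{|\cF_r(\xi)|}\longrightarrow 0 .$$
This establishes a.e.\ convergence (to $0=\E[g-g\circ\tilde\phi\,|\,E_\alpha]$) on the coboundaries directly, which together with the trivial convergence on invariants, the density you flag in your last paragraph, and the maximal inequality closes the argument for $p\ge 1$ (using $L^2\cap L^\infty$ to reach the dense subspace for $p\ne 2$). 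With that correction your proof goes through.
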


Most of the effort in this paper goes towards showing that certain subset functions on equivalence relations obtained from the action of a hyperbolic group on its boundary are both regular and asymptotically invariant. Next we explain how these results imply pointwise ergodic theorems for  $\Gamma$.  We also need to generalize previous results because the action of $\Gamma$ on its boundary need not be essentially free.

\subsection{From equivalence relations to ergodic sequences }\label{sec:general}

We begin by recalling the definition of the Maharam extension of a general non-singular action, and the method of deriving ergodic theorems from it.  

\begin{defn}\label{defn:maharam}
The {\em Maharam extension} of a measure-class-preserving action $\Gamma \cc (B,\nu)$ is the action $\Gamma \cc (B \times \RR, \nu \times \theta)$ defined by
$$\gamma(\xi,t):=(\gamma \xi, t - R(g,\xi)), \quad R(g,\xi) := \log\left( \frac{d\nu \circ g}{d\nu}(\xi) \right)$$
and $\theta$ is the measure on $\RR$ satisfying $d\theta(t)=e^tdt$. This action is measure-preserving.

If $\Gamma \cc (B,\nu)$ is ergodic then we say $\Gamma \cc (B,\nu)$ has {\em type $III_1$} if the Maharam extension is also ergodic.
\end{defn}

\begin{defn}\label{defn:ubs}
A measure-class preserving action $\Gamma \cc (B,\nu)$ has {\em uniformly bounded stabilizers} if there is a  constant $C>0$ such that for a.e. $\xi \in B$, $|\Stab(\xi)|\le C$ where $\Stab(\xi)=\{g\in \Gamma:~g\xi=\xi\}$.
\end{defn}

\begin{thm}[The $III_1$ case]\label{thm:III_1}
Let $\Gamma$ be a countable group and $\Gamma \cc (B,\nu)$ a measure-class preserving action on a standard probability space with uniformly bounded stabilizers. Let $T>0$, $\cF=\{\cF_r\}_{r>0}$ be a measurable family of set functions for the equivalence relation $E$ on $B\times [0,T]$ (induced from the Maharam extension $\Gamma \cc B\times \R$ as above) and let $\psi\in L^q(B,\nu)$ be a probability density (so $\psi \ge 0$, $\int \psi~d\nu=1$). 

Define probability measures $\zeta_r$ on $\Gamma$ by
$$\zeta_r(g) = T^{-1} \int_0^T \int |\{w\in \Gamma:~w(\xi,t)\in \cF_r(\xi,t)\}|^{-1}1_{\cF_r(\xi,t)}(g^{-1}(\xi,t))\psi(\xi)~d\nu(\xi) dt.$$

Let $p>1$ be such that $\frac{1}{p}+\frac{1}{q}=1$. 
\begin{itemize}
\item If $\cF$ is regular then $\{\zeta_r\}$ satisfies the strong $L^p$ maximal inequality. If $\psi \in L^\infty(B,\nu)$ then $\{\zeta_r\}$ satisfies the $L\log L$ maximal inequality.
\item If $\cF$ is regular and asymptotically invariant then $\{\zeta_r\}$ is a pointwise convergent family in $L^p$ (and if $\psi \in L^\infty(B,\nu)$ then it is pointwise convergent in $L\log L$).
\item If $\cF$ is regular, asymptotically invariant, $\Gamma \cc (B,\nu)$ is weakly mixing, type $III_1$ and stable type $III_\lambda$ (where either $\lambda=1$ or $T$ is a positive integer multiple of $-\log(\lambda)>0$) then $\{\zeta_r\}$ is a pointwise ergodic family in $L^p$ (and if $\psi \in L^\infty(B,\nu)$ then it is pointwise ergodic in $L\log L$).
\end{itemize}
\end{thm}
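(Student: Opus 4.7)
The plan is to reduce Theorem~\ref{thm:III_1} to Theorem~\ref{thm:pointwise-eq}, applied to a lift of the subset functions $\cF$ to an equivalence relation on $B\times [0,T]\times X$, and then to recover statements about $\pi_X(\zeta_r)$ by integrating in the boundary variables against $\psi$.

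First I would set $\tilde B=B\times [0,T]$ with the $\Gamma$-invariant measure $\tilde\nu=\nu\times dt$ inherited from the Maharam extension, and let $\tilde E$ denote the orbit equivalence relation of the diagonal action $\Gamma\cc(\tilde B\times X,\tilde\nu\times m)$. The subset functions $\cF_r$ on $\tilde B$ lift to subset functions $\tilde\cF_r$ on $\tilde B\times X$ by declaring $(\xi',t',x')\in\tilde\cF_r(\xi,t,x)$ iff $(\xi',t')\in\cF_r(\xi,t)$ and $x'=g^{-1}x$ for some $g\in\Gamma$ realizing $g^{-1}(\xi,t)=(\xi',t')$. Because $\Gamma\cc(B,\nu)$ has uniformly bounded stabilizers, the cardinality $|\tilde\cF_r(\xi,t,x)|$ is comparable to $|\cF_r(\xi,t)|$ up to a fixed multiplicative constant, so regularity and asymptotic invariance transfer from $\cF$ to $\tilde\cF$. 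Applying Theorem~\ref{thm:pointwise-eq} to $\tilde\cF$ then yields the weak $(1,1)$- and strong $L^p$-maximal inequalities for $F\mapsto\A[F|\tilde\cF_r]$ on $L^p(\tilde B\times X)$, and under asymptotic invariance the pointwise convergence of $\A[F|\tilde\cF_r]$ to $\E[F|\tilde E]$.

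Next I would establish the transfer identity
\[
\pi_X(\zeta_r)f(x)=\int_{\tilde B}\A[\tilde f|\tilde\cF_r](\xi,t,x)\,\psi(\xi)\,d\nu(\xi)\,\frac{dt}{T},\qquad \tilde f(\xi,t,x):=f(x),
\]
in which the weight $|\{w\in\Gamma:w(\xi,t)\in\cF_r(\xi,t)\}|^{-1}$ built into the definition of $\zeta_r$ is precisely the factor that cancels stabilizer multiplicity in the sum defining $\A[\tilde f|\tilde\cF_r]$. Granting this identity, H\"older's inequality with conjugate exponents $p,q$ in the $(\xi,t)$-variables transfers the strong $L^p$ maximal inequality for $\tilde\cF$ to $\{\zeta_r\}$; the $L\log L$-endpoint case is analogous when $\psi\in L^\infty$, using the weak $(1,1)$ plus strong $L^p$ bounds for $\tilde\cF$ and a standard truncation argument. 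Fubini together with the pointwise convergence of $\A[\tilde f|\tilde\cF_r]$ produces pointwise convergence of $\pi_X(\zeta_r)f$, delivering the first two bullets.

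The hard part will be the third bullet: identifying the pointwise limit with $\E[f|\Gamma]$. Theorem~\ref{thm:pointwise-eq} only delivers convergence to $\E[\tilde f|\tilde E]$, so I must show that for a.e.\ $x$ this $\tilde E$-invariant function is constant in $(\xi,t)$ and equal to $\E[f|\Gamma](x)$. This requires the three dynamical hypotheses working together: weak mixing of $\Gamma\cc(B,\nu)$ upgrades ergodicity of $\Gamma\cc(X,m)$ to joint ergodicity on $B\times X$; type $III_1$ gives ergodicity of the full Maharam extension on $B\times\RR$; and the stable type $III_\lambda$ condition, with $T$ a positive integer multiple of $-\log\lambda$ (or $\lambda=1$), ensures that folding $\RR$ down to $[0,T]$ preserves ergodicity of the diagonal skew extension by $(X,m)$. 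Combining these yields $\E[\tilde f|\tilde E](\xi,t,x)=\E[f|\Gamma](x)$ a.e., and integrating against $\psi(\xi)\,dt/T$ gives the ergodic conclusion. The uniformly bounded stabilizer hypothesis appears again here, guaranteeing that the bounded-to-one discrepancy between the $\Gamma$-action and its orbit equivalence does not distort these invariant $\sigma$-algebras. I expect this last identification --- assembling weak mixing, type $III_1$, and the stable type matching condition into ergodicity of the correct skew product --- to be the principal technical obstacle.
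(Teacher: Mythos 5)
Your overall architecture (equivalence-relation maximal/ergodic theorem, a transfer identity integrating against $\psi\,d\nu\,dt/T$, H\"older in the boundary variables) matches the paper's in spirit, and the first two bullets are essentially within reach of your sketch. One structural difference: the paper does not lift $\cF$ to $B\times[0,T]\times X$ directly; instead it first reduces to the essentially free case by taking a product with a Bernoulli shift $\Gamma\cc(Y,p)$, lifts $\cF$ to $B\times Y\times[0,T]$, and then quotes \cite[Theorems 3.1 and 5.1]{BN3} wholesale for the free case. Your route avoids the Bernoulli extension, but it therefore cannot simply cite those results and must redo their content. This exposes two genuine gaps.

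First, the transfer of \emph{asymptotic invariance} to the lifted relation is not automatic from the cardinality comparison $|\cF_r|\le|\tcF_r|\le C|\cF_r|$. Asymptotic invariance is relative to a countable \emph{generating} subset of the full group, and a generating set of the lifted relation on $B\times[0,T]\times X$ must include elements that move the $X$-coordinate within a $\Stab(\xi,t)$-orbit while fixing $(\xi,t)$; one must exhibit such a generating set and verify the symmetric-difference estimates for it. This is exactly where the paper does real work (the Borel labeling $L$ and the lifts $\tilde\phi_n$), and your proposal passes over it. Second, and more seriously, your identification of the limit in the third bullet rests on the claim that $\E[\tilde f\,|\,\tilde E](\xi,t,x)$ is constant in $(\xi,t)$, i.e.\ that ``folding $\RR$ down to $[0,T]$ preserves ergodicity'' of the skew product. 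When the stable type is $III_\lambda$ with $\lambda<1$, the Maharam extension of $\Gamma\cc B\times X$ is \emph{not} ergodic: its ergodic components are permuted by the translation flow with period $-\log\lambda$, and restricting to the slab $[0,T]$ cannot restore ergodicity, so the conditional expectation genuinely depends on $t$. What saves the theorem is that $\tilde f$ is independent of $t$ and that $T$ is an integer multiple of the period, so that the average over $t\in[0,T]$ of the conditional expectations equals $\E[f|\Gamma](x)$; this averaging mechanism (relative weak mixing of the cocycle with respect to the compact group $K=\RR/T\ZZ$) is precisely the content of \cite[Theorems 3.1 and 5.1]{BN3} that the paper invokes and that your argument would need to prove. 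As written, the key step of the third bullet is both unproved and, in the $\lambda<1$ case, based on a false intermediate claim.
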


We refer to \cite{BN3} for background on type and stable type. We say that $\Gamma \cc (B,\nu)$ is {\em weakly mixing} if for any ergodic pmp action $\Gamma \cc (X,m)$, the product action $\Gamma \cc (B\times X,\nu\times m)$ is ergodic.

\begin{proof}
To begin, let us assume that $\Gamma \cc (B,\nu)$ is essentially free. We will show that this result follows from \cite[Theorems 3.1 and 5.1]{BN3}. By Theorem \ref{thm:pointwise-eq}, if $\cF$ is regular then it satisfies the weak $(1,1)$-type maximal inequality and if it is both regular and asymptotically invariant then it is poinwise ergodic in $L^p$ (for every $p\ge 1$).

Let $\alpha: E \to \Gamma$ be the cocycle $\alpha(\eta,\xi)=\gamma$ if $\gamma \xi =\eta$. For $(\eta,\xi)\in E$, let $\omega_r(\eta,\xi) = |\cF_r(\xi)|^{-1}$ if $\eta \in \cF_r(\xi)$ and $\omega_r(\eta,\xi)=0$ otherwise. If $\cF$ is regular then $\Omega=\{\omega_r\}$ satisfies the weak $(1,1)$-type maximal inequality and the strong $L^p$ maximal inequality (in the sense of \cite[\S 2.1]{BN3}). If $\cF$ is regular and asymptotically invariant, then $\Omega$ is a pointwise ergodic family in $L^p$ (for every $p\ge 1$). 

 Let $K=\R/T\Z$ act on $B \times [0,T]$ by $k(\xi,t)=(\xi,t+k)$ where $t+k$ is taken modulo $T$. By \cite[Theorem 5.1]{BN3}, if $\Gamma \cc (B,\nu)$ is weakly mixing, type $III_1$ and stable type $III_\lambda$ (where either $\lambda=1$ or $0<T=-\log(\lambda)<\infty$)  then $\alpha$ is weakly mixing relative to the action of $K$. The result now follows from \cite[Theorem 3.1]{BN3}.

Let us suppose now that $\Gamma \cc (B,\nu)$ is not necessarily essentially free but does have uniformly bounded stabilizers. Let $\Gamma \cc (Y,p)$ be a nontrivial Bernoulli shift action. This action is essentially free, pmp and strongly mixing. It therefore enjoys the following multiplier property: if $\Gamma \cc (X,m)$ is any properly ergodic action then the product action $\Gamma \cc (X\times Y, m \times p)$ is also ergodic. In particular, it is not necessary for $\Gamma \cc (X,m)$ to be probability-measure-preserving. 

Observe that the product action $\Gamma \cc (B \times Y, \nu\times p)$ is essentially free. Let $\Gamma \cc B\times Y \times \RR$ denote the Maharam extension of $\Gamma \cc B\times Y$ and $\tilde{E}$ the induced equivalence relation on $B\times Y\times [0,T]$. Define the subset function $\tcF_r$ on $B\times Y \times [0,T]$ by $\tcF_r(\xi,y,t) = \{g(\xi,y,t):~g\in \Gamma, g(\xi,t) \in \cF_r(\xi,t)\}$. Note that 
\begin{eqnarray}\label{eqn:zeta}
\zeta_r(g)=\frac{1}{T} \iiint_0^T |\tcF_r(\xi,y,t)|^{-1}1_{\tcF_r(\xi,y,t)}(g^{-1}(\xi,y,t))\psi(\xi)~dtd\nu(\xi)dp(y).
\end{eqnarray}

Because $\Gamma \cc (B,\nu_B)$ has uniformly bounded stabilizers, there is a constant $C>0$ such that
$$|\cF_r(\xi,t)| \le |\tcF_r(\xi,y,t)| \le C |\cF_r(\xi,t)|~\textrm{ for a.e. $(\xi,y,t)$.}$$
If $\cF$ is regular, this implies $\tcF:=\{\tcF_r\}_{r>0}$ is also regular. Therefore, the essentially free case implies that $\{\zeta_r\}$ satisfies the strong $L^p$-maximal inequality and, if $q=\infty$, the $L\log L$-type maximal inequality.

Let us suppose now that $\cF$ is asymptotically invariant. We will show that $\tcF$ is asymptotically invariant. There exists a countable set $\Phi \subset [E]$ such that $\Phi$ generates $E$ and 
$$\lim_{r\to\infty} \frac{ |\phi(\cF_r(\xi,t)) \vartriangle \cF_r(\xi,t)|}{|\cF_r(\xi,t)|} = 0$$
for a.e. $(\xi,t)$ and every $\phi \in \Phi$.

Let $J:B \times Y \times [0,T] \to [0,1]$ be a Borel isomorphism and choose $L:B \times Y\times [0,T] \to  \{1,2,\ldots,C\}$ to satisfy: for a.e. $(\xi,y,t)$ 
\begin{itemize}
\item if $g \in \Stab(\xi,t)$ and $J(\xi,y,t) < J(g(\xi,y,t))$ then $L(\xi,y,t) < L(g(\xi,y,t))$;
\item $\max \{ L(g(\xi,h,t)):~ g\in \Stab(\xi,t)\} = |\Stab(\xi,t)|$.
\end{itemize}
For each $\phi \in \Phi$ and $n\in \Z$ define $\tilde{\phi}_n \in [\tilde{E}]$ by $\tilde{\phi}_n(\xi,y,t)=(\xi',y',t')$ where $\phi(\xi,t)=(\xi',t')$, $(\xi,y,t)\tilde{E}(\xi',y',t')$ and $L(\xi,y,t)\equiv L(\xi',y',t')+n \mod |\Stab(\xi,t)|$. This is well-defined almost everywhere. Observe that for any $\phi \in \Phi$ and a.e. $(\xi,y,t) \in B\times Y\times [0,T]$ if $g\in \Gamma$ is such that $\phi(\xi,t)=g(\xi,t)$ then there exists $i \in \Z$ so that $\tilde{\phi}_i(\xi,y,t)=g(\xi,y,t)$. Therefore $\tilde{\Phi}:=\{\tilde{\phi}_n:~ \phi \in \Phi, n \in \Z\}$ is generating. 

This construction implies that for any $\phi \in \Phi, n \in \Z$ and a.e. $(\xi,y,t) \in B \times Y\times [0,T]$, $|\tcF_r(\xi,y,t)| \le C |\cF_r(\xi,t)|$ and 
$$|\tilde{\phi}_n(\tcF_r(\xi,y,t)) \vartriangle \tcF_r(\xi,y,t)| \le C|\phi(\cF_{r}(\xi,t)) \vartriangle \cF_{r}(\xi,t)|.$$
So
$$ \lim_{r\to\infty} \frac{ |\tilde{\phi}_n(\tcF_r(\xi,y,t)) \vartriangle \tcF_r(\xi,y,t)| }{ |\tcF_r(\xi,y,t) | } = 0$$
which implies $\tcF$ is asymptotically invariant. So (\ref{eqn:zeta}) and the essentially free case imply $\{\zeta_r\}$ is a pointwise convergent family in $L^p$ (and in $L \log L$ if $q=+\infty$). 

Next let us assume that $\Gamma \cc B$ is weakly mixing, type $III_1$ and stable type $III_\lambda$. Because $\Gamma \cc Y$ is weakly mixing and pmp, it follows immediately that the product action $\Gamma \cc B\times Y$ is weakly mixing and stable type $III_\lambda$. Because $\Gamma \cc B\times \RR$ is ergodic, the action $\Gamma \cc (B\times \RR) \times Y$ is also ergodic. But this is isomorphic to the Maharam extension of $\Gamma \cc B\times Y$. Therefore, $\Gamma \cc B\times Y$ has type $III_1$. The conclusion now follows from the essentially free case and (\ref{eqn:zeta}).
\end{proof}

\begin{defn}
Suppose $\Gamma \cc (B,\nu)$ is a measure-class-preserving action, $\lambda \in (0,1)$ and the Radon-Nikodym derivatives satisfy
$$R_\lambda(g,\xi):=-\log_\lambda\left( \frac{d\nu \circ g}{d\nu}(\xi) \right) \in \ZZ$$
for a.e. $\xi \in B$ and $g\in \Gamma$. In this case, we consider the {\em discrete Maharam extension} which is the action $\Gamma \cc (B \times \ZZ, \nu \times \theta_\lambda)$ defined by
$$\gamma(\xi,t):=(\gamma \xi, t - R_\lambda(g,\xi))$$
where $\theta_\lambda$ is the measure on $\ZZ$ satisfying $\theta_\lambda(\{n\}) = \lambda^{-n}$. This action is measure-preserving.

If, in addition, $\Gamma \cc (B,\nu)$ is ergodic then $\Gamma \cc (B,\nu)$ has {\em type $III_\lambda$} if the discrete Maharam extension is also ergodic. (Type $III_\lambda$ is also well-defined if the Radon-Nikodym derivatives do not satisfy the above condition: see \cite{KW91} or \cite{BN3} for background on type).
\end{defn}

\begin{thm}[The $III_\lambda$ case]\label{thm:III_lambda}
Let $\Gamma$ be a countable group and $\Gamma \cc (B,\nu)$ a measure-class preserving action on a standard probability space with uniformly bounded stabilizers and Radon-Nikodym derivatives which satisfy
$$R_\lambda(g,\xi):=-\log_\lambda\left( \frac{d\nu \circ g}{d\nu}(\xi) \right) \in \ZZ$$
for a.e. $\xi \in B$ and $g\in \Gamma$. 

Let $\cF=\{\cF_r\}_{r>0}$ be a measurable family of set functions for the equivalence relation $E$ on $B\times \{0,1,\ldots, N-1\}$ (induced from the discrete Maharam extension $\Gamma \cc B\times \ZZ$ as above) and let $\psi\in L^q(B,\nu)$ be a probability density (so $\psi \ge 0$, $\int \psi~d\nu=1$). 

Define probability measures $\zeta_r$ on $\Gamma$ by
$$\zeta_r(g) = N^{-1} \sum_{t =0}^{N-1} \int |\{w\in \Gamma:~w(\xi,t)\in \cF_r(\xi,t)\}|^{-1}1_{\cF_r(\xi,t)}(g^{-1}(\xi,t))\psi(\xi)~d\nu(\xi).$$

Let $p>1$ be such that $\frac{1}{p}+\frac{1}{q}=1$. 
\begin{itemize}
\item If $\cF$ is regular then $\{\zeta_r\}$ satisfies the strong $L^p$ maximal inequality. If $\psi \in L^\infty(B,\nu)$ then $\{\zeta_r\}$ satisfies the $L\log L$ maximal inequality.
\item If $\cF$ is regular and asymptotically invariant then $\{\zeta_r\}$ is a pointwise convergent family in $L^p$ (and if $\psi \in L^\infty(B,\nu)$ then it is pointwise convergent in $L\log L$).
\item If $\cF$ is regular, asymptotically invariant, $\Gamma \cc (B,\nu)$ is weakly mixing, type $III_\lambda$ and stable type $III_\tau$ for $\tau=\lambda^{m}$ (some $m\in \NN$ such that $(N/m) \in \ZZ$) then $\{\zeta_r\}$ is a pointwise ergodic family in $L^p$ (and if $\psi \in L^\infty(B,\nu)$ then it is pointwise ergodic in $L\log L$).
\end{itemize}
\end{thm}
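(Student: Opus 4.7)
The plan is to follow the same two-step template used in the proof of Theorem \ref{thm:III_1}: first treat the essentially free case by reducing to the ergodic theorems for equivalence relations from \cite{BN3}, and then reduce the bounded-stabilizer case to the essentially free case by forming a product with a Bernoulli shift. The only new feature is that the Maharam extension is replaced by the discrete Maharam extension, so the compact group $K=\RR/T\ZZ$ used in the proof of Theorem \ref{thm:III_1} is now the finite cyclic group $K=\ZZ/N\ZZ$ acting on the second coordinate of $B\times \{0,1,\ldots,N-1\}$ by translation modulo $N$.

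In the essentially free case, let $E$ be the equivalence relation on $B\times\{0,\ldots,N-1\}$ coming from the discrete Maharam extension, let $\alpha:E\to\Gamma$ be the tautological cocycle $\alpha(\eta,\xi)=\gamma$ when $\gamma\xi=\eta$, and define weights $\omega_r(\eta,\xi)=|\cF_r(\xi)|^{-1}\mathbf{1}_{\cF_r(\xi)}(\eta)$. Theorem \ref{thm:pointwise-eq} then provides the weak $(1,1)$-type and strong $L^p$-type maximal inequalities for $\Omega=\{\omega_r\}$ under the regularity assumption, and pointwise convergence in $L^p$ under the additional asymptotic invariance assumption. Together with the discrete analogs of \cite[Theorems 3.1 and 5.1]{BN3}, these imply the first two bullets of the theorem. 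For the third bullet, the assumption that $\Gamma\cc(B,\nu)$ is weakly mixing, of type $III_\lambda$ and of stable type $III_\tau$ with $\tau=\lambda^m$ and $N/m\in\ZZ$ is precisely what is needed so that the $K=\ZZ/N\ZZ$-action quotients out the ``null'' part of the cocycle $\alpha$ and leaves it weakly mixing relative to $K$; this is the discrete counterpart of the relative weak mixing conclusion of \cite[Theorem 5.1]{BN3}, and \cite[Theorem 3.1]{BN3} then yields the pointwise ergodic theorem for $\{\zeta_r\}$ on any pmp action of $\Gamma$.

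For the general uniformly bounded stabilizer case, let $\Gamma\cc(Y,p)$ be a nontrivial Bernoulli shift, which is essentially free, strongly mixing, and has the multiplier property that $\Gamma\cc(X\times Y,m\times p)$ is ergodic whenever $\Gamma\cc(X,m)$ is properly ergodic. Then $\Gamma\cc(B\times Y,\nu\times p)$ is essentially free, and its discrete Maharam extension is $\Gamma\cc B\times Y\times \ZZ$ with the same cocycle $R_\lambda$ (pulled back through the projection). Define the lifted subset functions $\tcF_r$ on $B\times Y\times\{0,\ldots,N-1\}$ by $\tcF_r(\xi,y,t)=\{g(\xi,y,t):g\in\Gamma, g(\xi,t)\in\cF_r(\xi,t)\}$, so that
$$\zeta_r(g)=\frac{1}{N}\sum_{t=0}^{N-1}\iint |\tcF_r(\xi,y,t)|^{-1}\mathbf{1}_{\tcF_r(\xi,y,t)}(g^{-1}(\xi,y,t))\psi(\xi)\,d\nu(\xi)\,dp(y).$$
Uniform boundedness of stabilizers gives $|\cF_r(\xi,t)|\le|\tcF_r(\xi,y,t)|\le C|\cF_r(\xi,t)|$, so regularity of $\cF$ implies regularity of $\tcF$. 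For asymptotic invariance, reuse the labeling trick from the proof of Theorem \ref{thm:III_1}: choose a measurable $L:B\times Y\times\{0,\ldots,N-1\}\to\{1,\ldots,C\}$ that distinguishes the points in each stabilizer orbit, and lift each $\phi\in\Phi$ to a countable family $\tilde\phi_n\in[\tilde E]$ generating $\tilde E$, with $|\tilde\phi_n(\tcF_r)\vartriangle\tcF_r|\le C|\phi(\cF_r)\vartriangle\cF_r|$. Finally, since $\Gamma\cc Y$ is weakly mixing and pmp, its product with $\Gamma\cc B$ preserves weak mixing and stable type $III_\tau$, and ergodicity of the discrete Maharam extension of $\Gamma\cc B$ combined with ergodicity of $\Gamma\cc Y$ yields ergodicity of the discrete Maharam extension of $\Gamma\cc B\times Y$, i.e. type $III_\lambda$. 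The essentially free case applied to the lifted data then closes the argument.

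The main obstacle I anticipate is verifying that the relative weak mixing machinery of \cite[\S 5]{BN3}, stated there for the continuous Maharam extension and the compact group $\RR/T\ZZ$, transfers cleanly to the discrete Maharam extension with the finite group $\ZZ/N\ZZ$. The divisibility assumption $N/m\in\ZZ$ plays the same role here as the constraint relating $T$ and $-\log\lambda$ in Theorem \ref{thm:III_1}: it is precisely the condition ensuring that $K$ contains the subgroup on which the Radon--Nikodym cocycle is trivial, so that after passing to the $K$-quotient only an essentially ergodic, weakly mixing piece of the cocycle $\alpha$ remains. Once this relative weak mixing is confirmed in the discrete setting, the rest of the argument is a routine adaptation of the continuous case handled in Theorem \ref{thm:III_1}.
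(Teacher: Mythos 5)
Your proposal matches the paper's proof: the paper disposes of the essentially free case by citing \cite[Theorems 3.1 and 5.2]{BN3} and then declares the bounded-stabilizer reduction ``analogous to the proof of Theorem \ref{thm:III_1},'' which is precisely the Bernoulli-product argument you reproduce. The one obstacle you flag --- transferring the relative weak mixing machinery from $\RR/T\ZZ$ to $\ZZ/N\ZZ$ --- is already handled in the literature, since \cite[Theorem 5.2]{BN3} is the discrete counterpart of \cite[Theorem 5.1]{BN3}, so no new verification is needed there.
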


\begin{proof}
The essentially free case follows from \cite[Theorems 3.1 and 5.2]{BN3}. The rest of the proof is analogous to the proof of Theorem \ref{thm:III_1} so we leave it to the reader. 
\end{proof}

\section{Gromov hyperbolic spaces}\label{sec:Gromov}
We now turn to establish some properties of  Gromov hyperbolic spaces, the Gromov boundary and the horofunction boundary. These results will be applied to the case where  $(\Gamma,d)$ is a nonelementary uniformly quasi-geodesic hyperbolic group.

\subsection{The Gromov boundary}

Let $(\cX,d_\cX)$ be a $\delta$-hyperbolic space. A sequence $\{x_i\}_{i=1}^\infty$ in $\cX$ is a {\em Gromov sequence} if
$$\lim_{i,j \to \infty} (x_i|x_j)_z = +\infty$$
for some (and hence, any) basepoint $z\in \cX$. Two Gromov sequences $\{x_i\}_{i=1}^\infty$, $\{y_i\}_{i=1}^\infty $ are {\em equivalent} if $\lim_{i\to\infty} (x_i|y_i)_z = +\infty$ with respect to some (and hence any) basepoint $z$. It is an exercise to show that this defines an equivalence relation (assuming $(\cX,d_\cX)$ is $\delta$-hyperbolic). The {\em Gromov boundary} is the space of equivalence classes of Gromov sequences. We denote it by $\partial  \cX$, leaving the metric implicit.  Let $\overline{\cX}$ denote $\cX \cup \partial \cX$. 

The Gromov product extends to $\partial \cX$ as follows. Let $p,z \in \cX$ and $\xi, \eta \in \partial \cX$. Define
$$(\xi|p)_z := \inf \liminf_{i \to \infty} (x_i|p)_z, \quad (\xi|\eta)_z := \inf \liminf_{i \to \infty} (x_i|y_i)_z$$
where the infimums are over all sequences $\{x_i\}_{i=1}^\infty \in \xi, \{y_i\}_{i=1}^\infty\in \eta$. By \cite[ Lemma 5.11]{Va05}
\begin{eqnarray}\label{eqn:gromovproduct}
\limsup_{i \to \infty} (x_i|y_i)_z -2\delta \le (\xi|\eta)_z \le \liminf_{i \to \infty} (x_i|y_i)_z
\end{eqnarray}
for any sequences $\{x_i\}_{i=1}^\infty \in \xi, \{y_i\}_{i=1}^\infty\in \eta$. These inequalities also hold if $\eta = p \in \cX$ and $y_i$ is any sequence with $\lim_{i\to\infty} y_i=p$. According to \cite[Proposition 5.12]{Va05},  inequality (\ref{eqn:gromov}) extends to $x,y \in \partial \cX$.

In \cite{BH99} it is shown that if $\epsilon>0$ is sufficiently small and $\bar{d}_\epsilon:\overline{\cX} \times \overline{\cX} \to \RR$ is defined by
$$\bar{d}_\epsilon(\xi,\eta):=e^{-\epsilon (\xi|\eta)_z}$$
then there exists a metric $\bar{d}$ on $\overline{\cX}$ and constants $A,B>0$ such that $A\bar{d}_\epsilon \le \bar{d} \le B\bar{d}_\epsilon.$ Any such metric is called a {\em visual metric}. The topology on $\cX$ induced by $d_\cX$ agrees with the topology induced by $\bar{d}$. Moreover $\cX$ is dense in $\bar{\cX}$.

\subsection{Quasi-conformal measures and horofunctions}

Let $(\cX,d_\cX)$ be a $\delta$-hyperbolic metric space. Choose a basepoint $x_0\in \cX$. 
\begin{lem}
Let $\xi \in \partial \cX$ and suppose $\{y_i\}, \{z_i\} \subset \cX$ are two sequences converging to $\xi$ (w.r.t. the topology on $\overline{\cX}$). Then for any $w \in \cX$,
$$\limsup_{i\to\infty} \left| d_\cX(y_i,w) - d_\cX(y_i,x_0) - \Big( d_\cX(z_i,w) - d_\cX(z_i,x_0) \Big) \right| \le 4\delta.$$
\end{lem}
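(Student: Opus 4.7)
The plan is to convert the quantity inside the limsup into a difference of Gromov products and then use the $\delta$-hyperbolicity inequality (\ref{eqn:gromov}) together with the fact that equivalent Gromov sequences have large Gromov product.

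First I would rewrite the expression using the identity
\[
d_\cX(a,b) = d_\cX(a,x_0) + d_\cX(b,x_0) - 2(a|b)_{x_0},
\]
which follows directly from the definition of the Gromov product. Applied to $a=y_i,b=w$ and to $a=z_i,b=w$, this gives
\[
d_\cX(y_i,w) - d_\cX(y_i,x_0) = d_\cX(w,x_0) - 2(y_i|w)_{x_0},
\]
and the analogous identity for $z_i$. Subtracting, the $d_\cX(w,x_0)$ terms cancel, so the quantity whose limsup we wish to bound equals exactly $2\,\bigl|(z_i|w)_{x_0} - (y_i|w)_{x_0}\bigr|$. The problem therefore reduces to showing $\limsup_i |(y_i|w)_{x_0}-(z_i|w)_{x_0}| \le 2\delta$.

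Next I would invoke the two standing facts: (i) since $\{y_i\}$ and $\{z_i\}$ both converge to $\xi \in \partial \cX$, they are equivalent Gromov sequences, hence $(y_i|z_i)_{x_0} \to +\infty$; and (ii) since $w$ is a fixed point of $\cX$, the triangle inequality gives $(y_i|w)_{x_0}, (z_i|w)_{x_0} \le d_\cX(x_0,w)$, so both of these sequences are bounded above by a constant depending only on $w$ and $x_0$. Applying the $\delta$-hyperbolicity inequality (\ref{eqn:gromov}) with points $x=y_i$, $y=w$, $z=z_i$, $w=x_0$ yields
\[
(y_i|w)_{x_0} \ge \min\{(y_i|z_i)_{x_0},\,(z_i|w)_{x_0}\} - \delta,
\]
and since $(y_i|z_i)_{x_0}\to\infty$ while $(z_i|w)_{x_0}$ is bounded, for all sufficiently large $i$ the minimum equals $(z_i|w)_{x_0}$, so $(y_i|w)_{x_0} \ge (z_i|w)_{x_0} - \delta$. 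Swapping the roles of $y_i$ and $z_i$ gives the reverse inequality.

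Combining the two estimates, $|(y_i|w)_{x_0}-(z_i|w)_{x_0}| \le \delta$ for all sufficiently large $i$, so the original limsup is bounded by $2\delta \le 4\delta$. No step looks delicate; the main thing to be careful about is that the hyperbolicity inequality as stated in (\ref{eqn:gromov}) is a $4$-point condition on $\Gamma$, but the same proof works verbatim in any $\delta$-hyperbolic space $(\cX,d_\cX)$ (or, if one insists on using only the stated form, one simply works with $\cX=\Gamma$, which is the only case needed in the sequel). The $4\delta$ bound in the statement rather than $2\delta$ is presumably just to accommodate standard slack that arises when the Gromov product is extended to boundary points via (\ref{eqn:gromovproduct}); the argument above shows the tighter bound $2\delta$ suffices for $w\in\cX$.
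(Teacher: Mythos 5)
Your proposal is correct and follows essentially the same route as the paper: both proofs first use the identity $d_\cX(y_i,w)-d_\cX(y_i,x_0)=d_\cX(w,x_0)-2(y_i|w)_{x_0}$ to reduce the claim to bounding $\limsup_i\,2\,|(y_i|w)_{x_0}-(z_i|w)_{x_0}|$. The only difference is the last step: the paper concludes by citing (\ref{eqn:gromovproduct}), while you derive the needed comparison of Gromov products directly from the four-point condition (\ref{eqn:gromov}) together with $(y_i|z_i)_{x_0}\to\infty$ and the bound $(y_i|w)_{x_0}\le d_\cX(x_0,w)$; this is self-contained and even gives the sharper constant $2\delta$.
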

\begin{proof}
Observe that
$$d_\cX(y_i,w) - d_X(y_i,x_0)  = d_\cX(w,x_0) - 2(y_i| w)_{x_0}.$$
A similar statement holds for $z_i$ in place of $y_i$. Thus,
$$\left| d_\cX(y_i,w) - d_\cX(y_i,x_0) - \Big( d_\cX(z_i,w) - d_\cX(z_i,x_0) \Big) \right|= 2\left| (y_i|w)_{x_0} - (z_i|w)_{x_0} \right|.$$
The lemma now follows from (\ref{eqn:gromovproduct}).
\end{proof}
For $\xi \in \partial \cX$, define $h_\xi:\cX \to \RR$ by
$$h_\xi(z): = \inf \liminf_{n\to\infty} d_\cX(z,y_i) - d_\cX(y_i,x_0)$$
where the infimum is over all sequences $\{y_i\} \subset \cX$ which converge to $\xi$. This is the {\em horofunction} associated to $\xi$ (and the basepoint $x_0$). By the previous lemma, if $\{x_i\}$ is any sequence converging to $\xi$ and $z \in \cX$ is arbitrary then
\begin{eqnarray}\label{eqn:horo}
\limsup_{i\to\infty} \left|h_\xi(z) - \Big(d_\cX(x_i,z) - d_\cX(x_i,x_0)\Big) \right| \le 4 \delta.
\end{eqnarray}

\begin{defn}[Quasi-conformal measure]\label{defn:qc}
Suppose $(\Gamma,d)$ is a Gromov hyperbolic group. A Borel probability measure $\nu$ on $\partial \Gamma$ is {\em quasi-conformal} if there are constants $\fh,C>0$ such that for any $g\in \Gamma$ and a.e. $\xi \in \partial \Gamma$,
$$C^{-1} \exp(-\fh h_\xi(g^{-1})) \le \frac{d\nu  \circ g}{d\nu}(\xi) \le C \exp(-\fh h_\xi(g^{-1})).$$
We will call $\fh>0$ the {\em quasi-conformal constant} associated to $\nu$.
\end{defn}

It is well-known that if $d$ comes from a word metric on $\Gamma$ (or more generally, any geodesic metric) then there is a quasi-conformal measure on $\partial \Gamma$ \cite{Co93}. More generally:

\begin{lem}\label{lem:conformal}
Let $(\Gamma,d)$ be a non-elementary, uniformly quasi-geodesic, hyperbolic group. Then there exists a quasi-conformal measure $\nu$ on $\partial \Gamma$. Moreover, any two quasi-conformal measures are equivalent. Also if $\fh$ is the quasi-conformal constant of $\nu$ then there is a constant $C>0$ such that
\begin{enumerate}
\item If $B(g,r)$ denotes the ball of radius $r$ centered at $g\in \Gamma$ then 
$$C^{-1} e^{ \fh r} \le |B(g,r)| \le C e^{\fh r},\quad \forall g\in \Gamma, r>0.$$

\item $$C^{-1}e^{-\fh n} \le \nu\left(\{ \xi' \in \partial \Gamma:~ (\xi|\xi')_e \ge n \}\right) \le C e^{-\fh n}, \quad \forall n>0, \xi \in \partial \Gamma.$$
\end{enumerate}
\end{lem}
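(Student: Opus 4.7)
The plan is to carry out the Patterson--Sullivan construction, adapted to the uniformly quasi-geodesic setting. Form the Poincar\'e series $P(s) := \sum_{g\in \Gamma} e^{-sd(e,g)}$ and let $\fh$ be its critical exponent. For $s > \fh$ define the probability measure $\mu_s := P(s)^{-1}\sum_{g\in\Gamma} e^{-sd(e,g)}\delta_g$ on $\overline{\Gamma} = \Gamma \cup \partial\Gamma$, and extract a weak-$*$ subsequential limit $\nu$ as $s \downarrow \fh$. To guarantee $\nu$ is supported on $\partial\Gamma$, apply Patterson's trick of replacing $e^{-sd(e,g)}$ by $f(d(e,g))e^{-sd(e,g)}$ for a slowly varying $f$ so that the modified series diverges at $s = \fh$. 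Since translation by $h\in\Gamma$ sends $\mu_s$ to $P(s)^{-1}\sum_g e^{-sd(e,hg)}\delta_g$, one has $d\mu_s\circ h/d\mu_s(g) = e^{-s(d(h^{-1}g,e)-d(g,e))}$. Pointing to infinity toward $\xi \in \partial\Gamma$, the estimate \eqref{eqn:horo} lets the exponent pass to $-\fh\, h_\xi(h^{-1})$ up to a bounded multiplicative constant depending only on $\delta$, yielding quasi-conformality with exponent $\fh$ in the sense of Definition \ref{defn:qc}.

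For the ball-growth bound (1), the upper estimate $|B(g,r)| \le Ce^{\fh r}$ follows from left-invariance of $d$ together with the definition of $\fh$ as the exponential growth rate. The lower bound $|B(g,r)| \ge C^{-1}e^{\fh r}$ is the delicate point, requiring pure exponential growth rather than mere upper exponential control. Here the uniform quasi-geodesic hypothesis is essential: every element of $S(e,r)$ can be extended along a $(1,c)$-quasi-geodesic to elements at distance $r+k$ for any $k\ge 0$, and conversely each element of $S(e,r+k)$ lies within bounded distance of a quasi-geodesic passing through some element of $S(e,r)$. This allows comparison of the cardinalities of consecutive annuli and upgrades the growth rate to matching multiplicative constants.

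For the shadow estimate (2), I will use a shadow-lemma argument. Given $\xi \in \partial\Gamma$ and $n>0$, pick $g_n$ on a $(1,c)$-quasi-geodesic ray from $e$ toward $\xi$ with $d(e,g_n)\in [n,n+c]$. The set $U_{\xi,n} := \{\xi' : (\xi|\xi')_e \ge n\}$ is, modulo errors depending only on $\delta$ and $c$, the shadow at $e$ of a bounded neighborhood of $g_n$; in particular, for $\xi' \in U_{\xi,n}$ one checks that $h_{\xi'}(g_n^{-1})$ differs from $-d(e,g_n)$ by at most a constant depending on $\delta,c$. Definition \ref{defn:qc} then yields
\[ \nu(U_{\xi,n}) = \int_{g_n^{-1}U_{\xi,n}} \frac{d\nu\circ g_n}{d\nu}(\eta)\, d\nu(\eta) \asymp e^{-\fh n}\, \nu(g_n^{-1} U_{\xi,n}), \]
so the two-sided estimate reduces to showing $\nu(g_n^{-1} U_{\xi,n})$ is bounded away from $0$ and $\infty$. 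The upper bound is automatic. For the lower bound, non-elementarity provides a fixed finite collection of pairwise-separated boundary points $\eta_1,\ldots,\eta_k$, and a compactness argument combined with the fact that $g_n^{-1}\xi$ ranges over a relatively compact family shows that at least one $\eta_j$ lies in $g_n^{-1}U_{\xi,n}$ uniformly in $n$, giving a positive lower bound by continuity of $\nu$.

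Uniqueness up to equivalence follows because any two quasi-conformal measures with the same exponent $\fh$ have Radon--Nikodym derivative $\Gamma$-invariant up to a bounded factor, hence essentially constant by the ergodicity of $\Gamma \curvearrowright (\partial\Gamma,\nu)$, which is itself a standard consequence of the shadow estimate. A different exponent would contradict the matching of exponential growth rates via (1). The main obstacle throughout is the absence of honest geodesics: every step where the classical proof would travel along a geodesic toward $\xi$ must be replaced by a quasi-geodesic argument, and one must verify that the $O(c+\delta)$ additive errors arising in the horofunction identification and in the shadow comparison are harmless and produce only the multiplicative constant $C$ in the final estimates.
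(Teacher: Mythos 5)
Your proposal goes a genuinely different route from the paper: the paper's entire proof is a citation of \cite[Theorem 2.3]{BHM11}, made applicable by the observation (Lemma \ref{lem:quasi-ruled}) that a $(1,c)$-quasi-geodesic space is quasi-ruled with $\tau=3c/2$. You instead attempt to reprove that theorem from scratch via the Patterson--Sullivan construction. The overall outline (Poincar\'e series, Patterson's trick, quasi-conformality from the horofunction estimate (\ref{eqn:horo}), Sullivan's shadow lemma for item (2), equivalence of quasi-conformal measures) is the standard one and is sound in spirit, so this is a legitimate, if much longer, alternative.

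However, your treatment of the growth estimate (1) has a genuine gap. The upper bound $|B(e,r)|\le Ce^{\fh r}$ does \emph{not} follow from ``the definition of $\fh$ as the exponential growth rate'': the critical exponent only yields $|B(e,r)|\le C_\epsilon e^{(\fh+\epsilon)r}$ for each $\epsilon>0$, and removing the $\epsilon$ is precisely the nontrivial content of Coornaert's theorem \cite{Co93}. Likewise, the proposed lower bound via extending quasi-geodesics only compares consecutive annuli (showing $|S(e,r+k)|\gtrsim |S(e,r)|$, say); it does not pin the growth rate to the exponent $\fh$ with two-sided multiplicative constants. The standard repair is to derive (1) \emph{from} the shadow lemma (2): the shadows of the elements of $S(e,r)$ cover $\partial\Gamma$ with multiplicity bounded above and below by constants depending only on $\delta$ and $c$, so summing $\nu(\text{shadow}(g))\asymp e^{-\fh r}$ over $g\in S(e,r)$ against $\nu(\partial\Gamma)=1$ gives $|S(e,r)|\asymp e^{\fh r}$; the bounded-multiplicity covering claim is itself a quasi-geodesic argument that must be supplied. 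A smaller issue: in the uniqueness step you invoke the Radon--Nikodym derivative of one quasi-conformal measure with respect to another before absolute continuity is known; the usual argument instead compares both measures on shadows (each gives mass $\asymp e^{-\fh d(e,g)}$) and uses a Vitali/Lebesgue differentiation argument, or applies ergodicity to the sum of the two measures.
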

\begin{proof}
This follows immediately from \cite[Theorem 2.3]{BHM11} and the fact that any non-elementary uniformly quasi-geodesic hyperbolic group is a proper quasi-ruled hyperbolic space by Lemma \ref{lem:quasi-ruled} below.
\end{proof}

The paper \cite{BHM11} contains many results for hyperbolic spaces under the assumption that these spaces are {\em quasi-ruled}. To be precise a metric space $(\cX,d_\cX)$ is quasi-ruled if  there are  constants $(\tau,\lambda,c)$ such that $(\cX,d_\cX)$ is $(\lambda,c)$-quasi-geodesic and for any $(\lambda,c)$-quasi-geodesic $\gamma: [a,b] \to \cX$ and any $a \le s \le t \le u \le b$,
$$d_\cX(\gamma(s),\gamma(t)) + d_\cX(\gamma(t),\gamma(u)) - d_\cX(\gamma(s),\gamma(u)) \le 2\tau.$$
\begin{lem}\label{lem:quasi-ruled}
If $(\cX,d_\cX)$ is $(1,c)$-quasi-geodesic then it is quasi-ruled.
\end{lem}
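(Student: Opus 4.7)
The plan is to observe that the quasi-ruled condition is a triangle-inequality defect bound for quasi-geodesics, and that when $\lambda = 1$ this bound follows immediately from the defining inequalities of a $(1,c)$-quasi-geodesic. Concretely, I would take the parameters in the definition of quasi-ruled to be $\lambda = 1$ and $c$ equal to the constant given by the hypothesis. Since the hypothesis already asserts that $(\cX,d_\cX)$ is $(1,c)$-quasi-geodesic, the first half of the quasi-ruled condition is satisfied verbatim.

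For the defect bound, let $\gamma:[a,b]\to \cX$ be any $(1,c)$-quasi-geodesic and fix $a\le s\le t\le u\le b$. By definition,
$$|i-j|-c \le d_\cX(\gamma(i),\gamma(j)) \le |i-j|+c$$
for all $i,j \in [a,b]$. Applying the upper bound on the two subsegments $[s,t]$ and $[t,u]$ gives
$$d_\cX(\gamma(s),\gamma(t)) + d_\cX(\gamma(t),\gamma(u)) \le (t-s)+c+(u-t)+c = (u-s)+2c,$$
while applying the lower bound on $[s,u]$ gives
$$d_\cX(\gamma(s),\gamma(u)) \ge (u-s)-c.$$
Subtracting, the defect is bounded by $3c$, so taking $\tau := 3c/2$ yields
$$d_\cX(\gamma(s),\gamma(t)) + d_\cX(\gamma(t),\gamma(u)) - d_\cX(\gamma(s),\gamma(u)) \le 2\tau.$$

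This shows $(\cX,d_\cX)$ is quasi-ruled with parameters $(3c/2,1,c)$. There is no genuine obstacle: the statement is a direct unfolding of the two definitions, relying on nothing beyond the triangle inequality inherent in the definition of a $(1,c)$-quasi-geodesic. The role of the lemma in the paper is essentially to flag that the framework of \cite{BHM11}, which is stated under the quasi-ruled hypothesis, applies in particular to the uniformly quasi-geodesic hyperbolic groups considered here, so that Lemma \ref{lem:conformal} can be invoked.
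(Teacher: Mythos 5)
Your proof is correct and is exactly the paper's argument: apply the upper quasi-geodesic bound on $[s,t]$ and $[t,u]$ and the lower bound on $[s,u]$ to get a defect of at most $3c$, then take $\tau = 3c/2$. The paper states the $3c$ bound without writing out the computation, but the route is identical.
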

\begin{proof}
If $\gamma:[a,b] \to \cX$ is any $(1,c)$-quasi-geodesic then for any $a \le s \le t \le u \le b$,
$$d_\cX(\gamma(s),\gamma(t)) + d_\cX(\gamma(t),\gamma(u)) - d_\cX(\gamma(s),\gamma(u)) \le 3c.$$
So we may set $\tau=3c/2$.
\end{proof}

The action of $\Gamma$ on its boundary need not be essentially free. For example, consider, if $\Gamma_0$ is word hyperbolic and $F$ is a finite group then $F$, considered as a subgroup of $\Gamma_0\times F$, acts trivially on the Gromov boundary of $\Gamma_0\times F$. However, it does have uniformly bounded stabilizers (in the sense of Definition \ref{defn:ubs}) a condition which is crucial to Theorems \ref{thm:III_1} and \ref{thm:III_lambda}.

\begin{lem}\label{lem:stab}
Let $(\Gamma,d)$ be a non-elementary uniformly quasi-geodesic hyperbolic group and $\nu$ be a quasi-conformal measure on $\partial \Gamma$. Then there is a constant $C>0$ such that $\nu$-a.e. $\xi \in \partial \Gamma$, $|\Stab(\xi)| \le C$.
\end{lem}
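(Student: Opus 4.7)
The plan is to show that $\nu$ has no atoms, deduce that for $\nu$-a.e.\ $\xi$ the stabilizer $\Stab(\xi)$ consists only of torsion elements, and then invoke the standard uniform bound on torsion subgroups in a hyperbolic group. Note at the outset that by the Milnor--\v{S}varc lemma the uniform quasi-geodesic hyperbolic metric $d$ is quasi-isometric to any word metric on $\Gamma$, so $\Gamma$ is word-hyperbolic and all standard structural results apply.

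First I would dispose of atoms. Since the extended Gromov product satisfies $(\xi|\xi)_e = +\infty$, the singleton $\{\xi\}$ lies in each set $\{\xi' \in \partial\Gamma : (\xi|\xi')_e \ge n\}$, so by Lemma~\ref{lem:conformal}(2),
$$\nu(\{\xi\}) \le \nu(\{\xi' : (\xi|\xi')_e \ge n\}) \le C e^{-\fh n}$$
for every $n$, and the right side tends to $0$. Next, since $\Gamma \cc (\Gamma,d)$ is proper, isometric and cocompact, the standard classification of isometries of proper hyperbolic spaces under cocompact actions forces every infinite-order $g \in \Gamma$ to be loxodromic, and therefore to fix exactly two points of $\partial \Gamma$. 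Hence the set
$$F := \bigcup_{g\in \Gamma,\,|g|=\infty} \mathrm{Fix}_{\partial\Gamma}(g)$$
is countable, and by non-atomicity $\nu(F)=0$. For each $\xi \notin F$, every element of $\Stab(\xi)$ must therefore have finite order.

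To finish, I would invoke two standard structural facts about the word-hyperbolic group $\Gamma$: (i) by the quasi-center argument in a $\delta$-hyperbolic space, every finite subgroup of $\Gamma$ is conjugate into a ball of radius $O(\delta)$ about the identity, giving a uniform bound $C = C(\Gamma,d)$ on the order of any finite subgroup; and (ii) every finitely generated torsion subgroup of a hyperbolic group is finite (a theorem due to Gromov, a consequence of the Tits alternative for hyperbolic groups). Combining these: if $\xi \notin F$ and $|\Stab(\xi)|>C$, pick $C+1$ distinct elements of $\Stab(\xi)$; the subgroup they generate is finitely generated and torsion, hence finite by (ii) and of order at most $C$ by (i), contradicting their distinctness. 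Therefore $|\Stab(\xi)| \le C$ for $\nu$-a.e.\ $\xi$.

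The main obstacle in this outline is not any single step but the bookkeeping required to invoke the three classical hyperbolic-group facts (loxodromicity of infinite-order elements in cocompact actions, uniform boundedness of finite subgroups, and finiteness of finitely generated torsion subgroups) in the possibly non-word-metric setting $(\Gamma,d)$; the initial reduction to the word-hyperbolic case via Milnor--\v{S}varc is what makes each of them directly available.
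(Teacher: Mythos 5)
Your proposal is correct and follows essentially the same route as the paper's proof: non-atomicity of $\nu$ (via Lemma \ref{lem:conformal}) makes the countable set of boundary fixed points of infinite-order elements null, so a.e.\ stabilizer is a torsion subgroup, which is then finite of uniformly bounded order by the standard facts about hyperbolic groups. The only cosmetic difference is that you make the quasi-isometry reduction to a word metric explicit and route the finiteness of torsion stabilizers through finitely generated subgroups, whereas the paper directly cites that every torsion subgroup of a hyperbolic group is finite via the Tits alternative; both are fine.
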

\begin{proof}
Let $g \in \Gamma$ have infinite order. It is well-known that there exist distinct elements $\{g^-, g^+\} \subset \partial \Gamma$ such that $\lim_{n\to\infty} g^n  = g^+$ and $\lim_{n\to\infty} g^{-n} = g^-$. Moreover if $\xi \in \partial \Gamma \setminus\{g^-,g^+\}$ then $\lim_{n\to\infty} g^n\xi  = g^+$ and $\lim_{n\to\infty} g^{-n}\xi = g^-$. 

Let $A \subset \partial \Gamma$ be the union of the points $g^-$ and $g^+$ for all infinite-order elements $g\in \Gamma$. Because $\nu$ has no atoms (by Lemma \ref{lem:conformal}) and $A$ is a countable set, $\nu(A)=0$.  Moreover, if $\xi \in \partial \Gamma \setminus A$ and $g$ is any infinite order element then $g \notin \Stab(\xi)$ since $\lim_{n\to\infty} g^n\xi  = g^+ \ne \xi$. 

Thus if $g \in \Stab(\xi)$ and $\xi \in \partial \Gamma \setminus A$ then $g$ has finite order, i.e., $\Stab(\xi)$ is a torsion subgroup. Because the Tits alternative holds for hyperbolic groups \cite{Gr87}, every torsion subgroup of $\Gamma$ is finite.  It is well-known (see e.g., \cite{BG95}, \cite{BH99} or \cite{Br00}) that there is a constant $C>0$ such that for every finite subgroup $H< \Gamma$, $|H| \le C$. This proves $|\Stab(\xi)|\le C$. 
\end{proof}

\subsection{The type of the boundary action}\label{sec:type}

Let $\partial \Gamma$ denote the Gromov boundary of $\Gamma$ and let $\nu$ be a quasi-conformal measure on $\partial \Gamma$ with quasi-conformal constant $\fh$. By \cite{Bo12}, $\Gamma \cc (B,\nu)$ is type $III_\lambda$ for some $\lambda \in (0,1]$. 
\begin{lem}\label{lem:qc-type}
Suppose $\lambda \in (0,1)$. Then there exists a quasi-conformal Borel probability measure $\nu'$ on $B$ such that if
$$R_\lambda(g,\xi)= - \log_\lambda \left( \frac{d \nu' \circ g}{d\nu'}(\xi) \right)$$
then $R_\lambda(g,b) \in \ZZ$ for a.e. $\xi\in \partial \Gamma$.
\end{lem}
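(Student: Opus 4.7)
The plan is to take $\nu':=Z^{-1}e^F\nu$ for a bounded Borel function $F:\partial\Gamma\to\RR$ and a normalizing constant $Z>0$. Writing $\rho(g,\xi):=(d\nu\circ g/d\nu)(\xi)$ for the Radon-Nikodym cocycle, the chain rule yields
\[
\log\frac{d\nu'\circ g}{d\nu'}(\xi) \;=\; \log\rho(g,\xi) + F(g^{-1}\xi) - F(\xi),
\]
so the condition $R_\lambda(g,\xi)\in\ZZ$ amounts to requiring that $\log\rho$ become a $\log(1/\lambda)\cdot\ZZ$-valued cocycle after this coboundary perturbation. The heart of the argument is therefore to produce a bounded $F$ with this property.

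The first step is the Connes-Krieger characterization of type $III_\lambda$. The boundary action $\Gamma\cc(\partial\Gamma,\nu)$ is ergodic (in fact weakly mixing by \cite{AL05,CM07}, as will be exploited later in the paper) and has type $III_\lambda$ with $\lambda\in(0,1)$; Krieger's theorem (see \cite{KW91}) then asserts that the Radon-Nikodym cocycle is cohomologous, as an $\RR$-valued cocycle on the orbit equivalence relation, to one taking values in the closed subgroup $\log(1/\lambda)\ZZ$. That is, there exists a Borel function $F_0:\partial\Gamma\to\RR$ with
\[
\log\rho(g,\xi) + F_0(g^{-1}\xi) - F_0(\xi) \in \log(1/\lambda)\cdot\ZZ
\]
for every $g\in\Gamma$ and $\nu$-a.e.\ $\xi$. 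This is the substantive content of the proof.

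The second step is a reduction modulo $\log(1/\lambda)$: shifting $F_0(\xi)$ by an integer multiple of $\log(1/\lambda)$ at each $\xi$ preserves the displayed congruence, so I may replace $F_0$ by its canonical representative
\[
F(\xi) := F_0(\xi) - \lfloor F_0(\xi)/\log(1/\lambda)\rfloor\cdot\log(1/\lambda)\in[0,\log(1/\lambda)),
\]
which is Borel and, crucially, bounded. Setting $Z:=\int e^F\,d\nu\in[1,1/\lambda]$ and $\nu':=Z^{-1}e^F\nu$ defines a probability measure equivalent to $\nu$ whose Radon-Nikodym cocycle satisfies $R_\lambda(g,\xi)\in\ZZ$ for every $g$ and $\nu$-a.e.\ $\xi$ by construction of $F$.

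Finally, $\nu'$ remains quasi-conformal with the same constant $\fh$. Indeed, $e^{F(g^{-1}\xi)-F(\xi)}\in(\lambda,1/\lambda)$, so the bounds on $\rho(g,\xi)$ from Definition \ref{defn:qc} transfer immediately to $d\nu'\circ g/d\nu'$ with a slightly larger multiplicative constant $C':=C/\lambda$. The main obstacle is the cohomological trivialization in the first step, which rests on the full strength of the Connes-Krieger classification of type $III$ actions; once that is in hand, the remaining manipulations are routine.
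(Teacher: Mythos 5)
Your proof is correct and, once unwound, produces essentially the same object as the paper's: a measure $\nu'=Z^{-1}e^F\nu$ with $F$ a bounded transfer function trivializing the Radon--Nikodym cocycle modulo $\log(1/\lambda)\ZZ$. The two noteworthy differences are these. First, the paper does not apply Krieger's cohomology statement directly to the $\Gamma$-action: it first invokes Adams's theorem that the boundary action is amenable, then Connes--Feldman--Weiss to replace the $\Gamma$-action by a single nonsingular transformation generating the same orbit equivalence relation, and only then applies \cite[Proposition 2.2]{KW91}, which is stated for single transformations. Your appeal to ``Krieger's theorem'' for a general ergodic type $III_\lambda$ group action is a true statement (it is the cocycle-reduction consequence of the associated flow being periodic of period $\log(1/\lambda)$), but the reference \cite{KW91} does not literally cover it; you should either route through amenability and \cite{CFW81} as the paper does, or cite the Mackey-range/cocycle-reduction machinery for equivalence relations. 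This is a referencing gap rather than a mathematical one. Second, your reduction of $F_0$ modulo $\log(1/\lambda)$ to force $F\in[0,\log(1/\lambda))$ is an explicit and clean justification of the boundedness of $d\nu'/d\nu$, a point the paper disposes of only with ``a careful look at the proof reveals''; your version is the better argument here. The remaining steps (the chain rule giving the coboundary perturbation $F(g^{-1}\xi)-F(\xi)$, up to the usual $g$ versus $g^{-1}$ convention, and the transfer of the quasi-conformal bounds with constant $C/\lambda$) are routine and correct.
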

\begin{proof}
By \cite{Ad94} $\Gamma \cc (B,\nu)$ is amenable. So by \cite{CFW81}, the orbit-equivalence relation on $B$ is generated by a single measure-class-preserving Borel isomorphism $T:\partial \Gamma \to \partial \Gamma$. If $\lambda \in (0,1)$ then by \cite[Proposition 2.2]{KW91}, there exists a Borel probability measure $\nu'$ on $B$ which is equivalent to $\nu$ such that if
$$R_\lambda(g,\xi)= - \log_\lambda \left( \frac{d \nu' \circ g}{d\nu'}(\xi) \right)$$
then $R_\lambda(g,b) \in \ZZ$ for a.e. $\xi\in \partial \Gamma$ and every $g\in \Gamma$. A careful look at the proof reveals that $\nu'$ can be chosen so that the Radon-Nikodym derivatives between $\nu$ and $\nu'$ are bounded. More precisely, there is a constant $C>0$ such that
$$C^{-1} \le \frac{d\nu'}{d\nu} \le C$$
almost everywhere. Therefore $\nu'$ is also quasi-conformal. 
\end{proof}
We now assume that $\nu=\nu'$ satisfies the lemma above. By quasi-conformality, there exists a constant $C>0$ such that
$$|R_\lambda(g,\xi) -\fh \log_\lambda(e) h_\xi(g^{-1}| \le C.$$
In order to streamline the exposition, let $\L$ denote either $\R$ or $\Z$ depending on whether $\lambda=1$ or $\lambda \in (0,1)$.  Also let
$$R_1(g,\xi)=R(g,\xi) =  \log \left( \frac{d \nu \circ g}{d\nu}(\xi) \right).$$
By quasi-conformality, 
$$|R_1(g,\xi) +\fh  h_\xi(g^{-1})| \le C$$
for some constant $C>0$. So if we let $\fh_\lambda = -\fh \log_\lambda(e)$, then we can say
\begin{eqnarray}\label{eqn:R-h}
|R_\lambda(g,\xi) +\fh_\lambda  h_\xi(g^{-1})| \le C
\end{eqnarray}
for every $\lambda \in (0,1], g\in \Gamma$ and a.e. $\xi \in \partial \Gamma$. 

Next we set some useful notation. Let $\theta_\lambda$ be the measure on $\L$ given by $d\theta_1(t)=e^t dt$ (if $\lambda=1$) and $\theta_\lambda(\{n\}) = \lambda^{-n}$ if $\lambda \in (0,1)$. The {\em Maharam extension} of the action $\Gamma \cc (\partial \Gamma,\nu)$ is the action $\Gamma \cc (\partial \Gamma \times \L, \nu \times \theta_\lambda)$ given by
$$g(\xi,t) = (g\xi, t-R_\lambda(g,\xi)).$$
This action preserves the measure $\nu \times \theta_\lambda$. 

Let $\L^+$ denote the set of positive elements of $\L$. Also, for $A<B \in \L$, we will let $[A,B)_\L$ denote the half-open interval in $\L$ from $A$ to $B$. So if $\L=\Z$, then $[A,B)_\L=\{A,A+1,\ldots,B-1\}$.

\section{Volume growth and regularity}\label{sec:doubling} 

The purpose of this section is to prove Theorem \ref{thm:maximal0} by applying Theorems \ref{thm:III_1}, \ref{thm:III_lambda} and estimating the cardinality of the intersection of balls and horoshells. 
\subsection{Regularity of the averaging sets}

Recall the definition of $R_\lambda(g,b)$ and the Maharam extension $\Gamma \cc \partial\Gamma\times \L$ from the previous section.

\begin{defn}\label{defn:Folner}
Fix $a>0$, $T \in \L^+$ and, for $(\xi,t) \in \partial \Gamma \times [0,T)_\L$, let 
\begin{eqnarray*}
\Gamma_r(\xi,t)&=& \{g\in \Gamma:~d(e,g)-h_\xi(g)-t\le r, g^{-1}(\xi,t) \in \partial \Gamma \times [0,T)_\L\}\\
\cB_r(\xi,t) &=& \{g^{-1}(\xi,t):~g \in \Gamma_r(\xi,t)\}\\
\cS_{a,r}&=&\cB_r \setminus \cB_{r-a}.
\end{eqnarray*}
Although these definitions depend on $T$ we will leave this dependence implicit in the notation.
\end{defn}

We will show that $\cB$ and $\cS_a$ are regular if $a,T$ are sufficiently large. We begin with an estimate of $|\cB_r|$.

\begin{lem}\label{lem:volume2}
There exist constants $a_0,T_0 >0$ such that if $T \ge T_0, a \ge a_0$ then for a.e. $(\xi,t)\in \partial \Gamma \times [0,T)_\L$ 
$$C^{-1}e^{\fh r/2} \le |\cB^{}_r(\xi,t)|, |\Gamma_r(\xi,t)|, |\cS_{r,a}(\xi,t)| \le Ce^{\fh r/2}\quad \forall r \ge 2T+2a$$
where the constant $C>0$ may depend on $T$ but not on $\xi$ or $r$.
\end{lem}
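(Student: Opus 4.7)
The plan is to express the defining conditions for $g\in \Gamma_r(\xi,t)$ in terms of the Gromov product $(g|\xi)_e$ and the horofunction value $h_\xi(g)$, and then count by partitioning by these two parameters. Using the identities $d(e,g) - h_\xi(g) = 2(g|\xi)_e + O(\delta)$ (immediate from the limiting definition of the extended Gromov product, cf.\ (\ref{eqn:gromovproduct})) and $R_\lambda(g^{-1},\xi) = -\fh_\lambda h_\xi(g) + O(1)$ from (\ref{eqn:R-h}), membership in $\Gamma_r(\xi,t)$ becomes equivalent, up to bounded error, to the two conditions
$$(g|\xi)_e \le (r+t)/2, \qquad h_\xi(g) \in [-t/\fh_\lambda,\ (T-t)/\fh_\lambda].$$

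The main counting input will be the estimate
$$\#\{g\in \Gamma : (g|\xi)_e \in [s,s+1),\ h_\xi(g) \in [h,h+1)\} \asymp e^{\fh(s+h)},$$
uniform in $\xi\in \partial\Gamma$ and valid for $s\ge 0$ and $h$ in the admissible range. Since $|g| = 2(g|\xi)_e + h_\xi(g)$, the left-hand side equals $\#\{g : |g|\approx 2s+h,\ (g|\xi)_e\in [s,s+1)\}$. This can be derived from a Patterson--Sullivan cone estimate: Lemma \ref{lem:conformal}(1) gives $\asymp e^{\fh(2s+h)}$ elements at distance $2s+h$ from $e$, and the shadow condition $(g|\xi)_e\ge s$ restricts to a ``cone" whose size is a multiplicative factor $\asymp e^{-\fh s}$ of the sphere, mirroring the fact that $\nu\{\xi':(\xi|\xi')_e\ge s\}\asymp e^{-\fh s}$ by Lemma \ref{lem:conformal}(2); the further restriction $(g|\xi)_e < s+1$ only changes the count by a bounded multiplicative factor.

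Summing the counting estimate over $s\in [0,(r+t)/2]$ and over $h$ in an interval of length $\asymp T/\fh_\lambda$ gives two geometric series dominated by their top terms, yielding $|\Gamma_r(\xi,t)| \asymp e^{\fh r/2}$ with implicit constants depending on $T$ (via factors like $e^{\fh T/2}$ and $e^{\fh T/\fh_\lambda}$) but independent of $r$ and $\xi$; the hypothesis $r\ge 2T+2a$ ensures the top terms of both series are genuinely achievable. The passage $|\Gamma_r|\mapsto |\cB_r|$ costs only a uniformly bounded factor, since the fiber of $g\mapsto g^{-1}(\xi,t)$ over any point is a coset of $\Stab(\xi,t)\subseteq \Stab(\xi)$, which is uniformly finite by Lemma \ref{lem:stab}. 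Finally $|\cS_{r,a}|=|\cB_r|-|\cB_{r-a}|\ge C^{-1}e^{\fh r/2} - C\, e^{\fh(r-a)/2}$, which is $\asymp e^{\fh r/2}$ once $a$ is larger than a constant depending only on the implicit constants; this is where the hypothesis $a\ge a_0$ enters. I expect the main obstacle to be the uniform-in-$\xi$ bookkeeping of the additive $O(1)$ errors coming from the hyperbolicity constant $\delta$, the quasi-geodesic constant $c$, and the constant in (\ref{eqn:R-h}), so that they do not accumulate and spoil the leading exponent when the two sums over $s$ and $h$ are combined.
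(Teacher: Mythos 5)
Your reductions are exactly the ones the paper uses: pass from $\cB_r$ to $\Gamma_r$ via the uniformly bounded stabilizers of Lemma \ref{lem:stab}, get $\cS_{r,a}$ from $\cB_r$ by choosing $a$ large enough that $C^{-1}e^{\fh r/2}-Ce^{\fh(r-a)/2}$ is bounded below by a positive multiple of $e^{\fh r/2}$, and sandwich $\Gamma_r(\xi,t)$ (via the identity $d(e,g)-h_\xi(g)=2(g|\xi)_e+O(\delta)$ and the bound (\ref{eqn:R-h})) between sets of the form $\{g: d(e,g)\lesssim r,\ h_\xi(g)\in[\text{bounded interval}]\}$. The divergence is in how the cardinality of such a ball--horoshell intersection is estimated: the paper simply cites \cite[Lemma 6.3]{Bo12}, which asserts $|B(e,r)\cap h_\xi^{-1}[T_1,T_2]|\asymp e^{\fh(r+T_2)/2}$ uniformly in $\xi$, whereas you propose to rederive this by disintegrating over unit boxes in the coordinates $((g|\xi)_e,h_\xi(g))$ and claiming each box has $\asymp e^{\fh(s+h)}$ elements.

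That per-box estimate is where your argument has a genuine gap, and it is precisely the content of the external lemma the paper invokes. The upper bound can be made rigorous along the lines you indicate (shadows of balls $B(g,r_0)$ centered at box elements have measure $\gtrsim e^{-\fh(2s+h)}$, have bounded multiplicity, and all lie in a set of measure $\lesssim e^{-\fh s}$ by Lemma \ref{lem:conformal}(2)), but ``mirroring the fact that $\nu\{\xi':(\xi|\xi')_e\ge s\}\asymp e^{-\fh s}$'' is not a proof of the \emph{lower} bound: you must produce $\gtrsim e^{\fh(s+h)}$ actual group elements with $(g|\xi)_e$ and $h_\xi(g)$ in the prescribed unit windows. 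Two things obstruct a soft argument here. First, Lemma \ref{lem:conformal}(1) controls balls only; in a general proper left-invariant (merely uniformly quasi-geodesic) metric a shell of width $O(1)$ need not contain $\gtrsim e^{\fh R}$ points, so an individual box can in principle be nearly empty, and then your ``geometric series dominated by its top term'' lower bound collapses, since that lower bound rests entirely on the top box being well populated. Second, even granting a populated shell, distributing its points among the cones $\{(g|\xi)_e\ge s\}$ in proportion to the boundary measure of the corresponding shadows requires a covering argument (every $\xi'$ in the shadow is approached by a quasi-geodesic passing within bounded distance of some shell element in the cone) that you have not supplied. So the skeleton of your proof matches the paper's, but the counting input must either be proved by such a two-sided shadow/covering argument or, as the paper does, imported from \cite[Lemma 6.3]{Bo12}.
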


\begin{proof}
Because stabilizers are uniformly bounded  by Lemma \ref{lem:stab}, $C_0^{-1}|\Gamma_r(\xi,t)| \le |\cB_r(\xi,t)| \le C_0 |\Gamma_r(\xi,t)|$ for some $C_0>1$. Because $\cS_{r,a} = \cB_r \setminus \cB_{r-a}$, the bound for $\cB_r$ implies the bound for $\cS_{r,a}$. So it suffices to estimate $|\Gamma_r(\xi,t)|$.

By (\ref{eqn:R-h}) there is a constant $C_1>1$ such that for a.e. $(\xi,t) \in \partial \Gamma \times [0,T)_\L$, if $g\in \Gamma$ is such that $g^{-1}(\xi,t) \in  \partial \Gamma \times [0,T)_\L$, then 
$$ |R_\lambda(g^{-1},\xi)| \le C_1|h_\xi(g)| + C_1, \quad |h_\xi(g)| \le C_1|R_\lambda(g^{-1},\xi)| + C_1.$$
Moreover, $0\le t \le T$ implies $|R_\lambda(g^{-1},\xi)| \le T$. We may assume $T>1$. So,
\begin{eqnarray*}
&&B\left(e,r-T-C_1\right) \cap h^{-1}_\xi\left[ \frac{t-T+C_1}{C_1}, \frac{t-C_1}{C_1} \right] \subset  \Gamma_r(\xi,t) \\
&\subset&  B\left(e,r+(C_1+T)^2\right) \cap h^{-1}_\xi[-C_1-C_1T,C_1+C_1T]
\end{eqnarray*}
where $B(e,r)$ is the ball of radius $r$ centered at the identity in $\Gamma$.

In \cite[Lemma 6.3]{Bo12}, it is shown that there is a constant $T_0>0$ such that if $T_2\ge T_1$ are such that $T_2-T_1\ge T_0$, $\xi\in \partial \Gamma$ and $r\ge \max(|T_1|, |T_2|)-2c$ then
$$ C^{-1} e^{\fh (r+T_2)/2} \le |B(e,r) \cap h^{-1}_\xi[T_1,T_2] |  \le C e^{\fh (r+T_2)/2}$$
where $C>0$ is a constant which may depend on $T_1,T_2$ but not on $r,\xi$. So the inclusions above imply the lemma.
\end{proof}

\begin{lem}
Let $T_0>0$ be as in Lemma \ref{lem:volume2}. If $T>T_0$ then the family $\cB=\{\cB_r\}_{r>0}$ is regular.
\end{lem}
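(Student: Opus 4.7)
The plan is to establish the inclusion $\bigcup_{s \le r}\cB_s^{-1}\cB_r(\xi,t) \subseteq \cB_{r+C}(\xi,t)$ for a constant $C$ depending only on $T, \delta$, and the quasi-geodesic constant of $(\Gamma,d)$. Combined with Lemma \ref{lem:volume2}, this immediately yields $|\cB_{r+C}(\xi,t)| \le C' e^{\fh(r+C)/2} \le C'' |\cB_r(\xi,t)|$ for $r$ large, giving regularity; for $r$ below the threshold $2T+2a$ the set $\cB_r$ is of bounded cardinality, so the ratio is trivially controlled.

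Any element of the product set has the form $(\xi',t') = h(\xi,t)$ with $h = g_2 g_1^{-1}$, where $g_1 \in \Gamma_r(\xi,t)$, $g_2 \in \Gamma_s(\xi',t')$ for some $s \le r$, and a common intermediate $\eta := g_1^{-1}\xi = g_2^{-1}\xi'$. Lemma \ref{lem:stab} reduces counting distinct $(\xi',t')$ to counting distinct $h$ up to a bounded factor. The conditions $t,t',u \in [0,T)_\L$ combined with (\ref{eqn:R-h}) yield $|h_\xi(g_1)|, |h_{\xi'}(g_2)| \le C_T$. The horofunction cocycle $h_{g\xi}(x) = h_\xi(g^{-1}x) - h_\xi(g^{-1}) + O(\delta)$, derivable from (\ref{eqn:horo}) by substituting approximating sequences $x_i \to \xi$, then yields the identity $h_\xi(h^{-1}) = h_\xi(g_1) - h_{\xi'}(g_2) + O(\delta)$, so $|h_\xi(h^{-1})| \le 2C_T + O(\delta)$.

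To verify $h^{-1} \in \Gamma_{r+C}(\xi,t)$ it remains to bound $d(e,h) \le r + O(T+\delta)$. This uses the basepoint-change identity $(h\xi \mid hg_1)_e = (\xi \mid g_1)_{h^{-1}}$ together with the constraint $(\xi' \mid g_2)_e \le (s+t')/2 \le (r+T)/2$ from $g_2 \in \Gamma_{\le r}(\xi',t')$. Expanding the basepoint change via $h_\xi(h^{-1})$ and the equality $d(h^{-1},g_1) = |hg_1| = |g_2|$ (by left-invariance), together with the horoshell lower bound $(\xi \mid g_1)_e \ge (|g_1|-C_T)/2 + O(\delta)$ and the four-point inequality (\ref{eqn:gromov}) applied via the extended Gromov products (\ref{eqn:gromovproduct}) to $g_1,g_2,\xi,\xi'$, produces a linear constraint on $|g_1|,|g_2|,|h|$ and $h_\xi(h^{-1})$ whose careful unwinding yields $d(e,h) \le r + O(T+\delta)$.

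The main obstacle is exactly this quantitative bound on $d(e,h)$: the triangle inequality alone gives only $d(e,h) \le |g_1|+|g_2| \le 2r + O(T)$, which is insufficient by a factor of two and would force a ratio $e^{\fh r/2}$ blowing up in $r$. The improvement arises from a cancellation mechanism induced by the common intermediate $\eta$: in the tree case, $\eta$ forces the suffixes of $g_1$ and $g_2$ to agree up to an element of the (bounded) stabilizer of $\eta$, producing near-exact cancellation in the word $g_2 g_1^{-1}$; in the general Gromov hyperbolic setting this cancellation is approximate, quantified through the four-point inequality and the quasi-geodesic hypothesis, with residual errors controlled by $C_T$ and $\delta$. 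Making this precise is the crux of the argument.
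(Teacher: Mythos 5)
Your overall skeleton matches the paper's: show $\bigcup_{s\le r}\cB_s^{-1}\cB_r(\xi,t)\subset \cB_{r+C}(\xi,t)$ for a constant $C=C(T,\Gamma,d)$ and then invoke Lemma \ref{lem:volume2} to compare $|\cB_{r+C}|$ with $|\cB_r|$. You also correctly isolate the one nontrivial point, namely that the triangle inequality only gives $d(e,h)\le 2r+O(T)$ and a genuine hyperbolicity argument is needed to improve this to $r+O(T)$. The problem is that this decisive estimate is exactly what you do not prove: you assert that a ``careful unwinding'' of a linear constraint coming from the four-point inequality applied to $g_1,g_2,\xi,\xi'$ yields $d(e,h)\le r+O(T+\delta)$, and then concede that ``making this precise is the crux of the argument.'' As written, the configuration you propose (both boundary points $\xi$ and $\xi'$, basepoint $e$) is not obviously the right one, and no verifiable chain of inequalities is given. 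This is a genuine gap, not a routine omission, since everything else in the lemma is soft.

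For the record, the estimate can be closed with a single, short application of $\delta$-hyperbolicity using only \emph{one} boundary point and a well-chosen basepoint. In the paper's notation, with $g_1\in\Gamma_r(\xi,t)$ and $g_2$ the element carrying $(\xi,t)$ to the target point, the membership conditions in $\partial\Gamma\times[0,T)_\L$ together with (\ref{eqn:R-h}) give $h_\xi(g_1)\approx 0$ and $h_\xi(g_2)\approx 0$ (both horofunctions taken at the \emph{same} $\xi$; the second is where the common intermediate point is used). One then writes
$(e|g_2)_{g_1}\gtrsim \min\{(e|\xi)_{g_1},\,(\xi|g_2)_{g_1}\}$,
and computes via (\ref{eqn:gromovproduct}) and (\ref{eqn:horo}) that $(e|\xi)_{g_1}\approx \tfrac12\bigl(d(e,g_1)+h_\xi(g_1)\bigr)\approx\tfrac12 d(e,g_1)$ and $(\xi|g_2)_{g_1}\approx\tfrac12\bigl(h_\xi(g_1)+d(g_1,g_2)-h_\xi(g_2)\bigr)\approx\tfrac12 d(g_1,g_2)$. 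Since $2(e|g_2)_{g_1}=d(e,g_1)+d(g_1,g_2)-d(e,g_2)$, either branch of the minimum yields $d(e,g_2)\lesssim\max\{d(e,g_1),d(g_1,g_2)\}\lesssim r$. Your write-up contains all the raw ingredients for this (the horofunction bounds, the cocycle identity, the extended Gromov product), but without exhibiting this or an equivalent computation the proof is incomplete at its central step.
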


\begin{proof}
Fix $k_0, k_1,k_2 \in \partial \Gamma \times [0,T)_\L$ such that $k_1 \in \cB^{}_r(k_0) \cap \cB^{}_r(k_2)$. To make the notation simpler we will write $x \lesssim y$ if $x \le y + C$ where $C$ is a constant that may depend on $T$ and $(\Gamma,d)$ but not on $r,k_0,k_1, k_2$ or $k_3$.  Of course, $x \gtrsim y$ means $y \lesssim x$ and $x \approx y$ means both $x \lesssim y$ and $y \lesssim x$.

Let $g_1 \in \Gamma_r(k_0)$ be such that $g_1^{-1}k_0=k_1$ and let $g_2 \in \Gamma$ be such that $g_2^{-1}g_1 \in \Gamma_r(k_2)$ and $g_1^{-1}g_2k_2=k_1$. Note $d(e,g_1)\lesssim r$ and $d(g_2^{-1}g_1,e) = d(g_1,g_2) \lesssim r$. 

Let $k_0=(\xi,t)$. Because $\nu$ is quasi-conformal and $g^{-1}_1k_0=k_1 \in \partial \Gamma \times [0,T)_\L$, we must have $|h_\xi(g_1)| \lesssim \fh^{-1}T$ which implies $h_\xi(g_1) \approx 0$. Because $g_1^{-1}g_2 k_2 = k_1 = g_1^{-1}k_0$, we have $k_2=g_2^{-1}k_0$. Therefore, $h_\xi(g_2) \approx 0$ as well.

\noindent {\bf Claim}. $d(g_2,e) \lesssim r$.

\noindent {\em Proof of Claim.} By $\delta$-hyperbolicity,
\begin{eqnarray*}\label{eqn:11}
(e|g_2)_{g_1} \gtrsim \min\{ (e|\xi)_{g_1}, (\xi|g_2)_{g_1} \}.
\end{eqnarray*}
So either
$$2(e|g_2)_{g_1} = d(e,g_1)+d(g_2,g_1)-d(e,g_2) \gtrsim 2(e|\xi)_{g_1} \approx d(e,g_1) + h_\xi(g_1) \approx d(e,g_1) $$
which implies
$$d(e,g_2) \lesssim d(g_2,g_1) \lesssim r$$
or
$$2(e|g_2)_{g_1} = d(e,g_1)+d(g_2,g_1)-d(e,g_2) \gtrsim 2(\xi|g_2)_{g_1} \approx h_\xi(g_1) + d(g_2,g_1) - h_\xi(g_2) \approx d(g_2,g_1)$$
which implies
$$d(e,g_2) \lesssim d(e,g_1) \lesssim r.$$
This proves the claim.


The claim implies $d(e,g_2) - h_\xi(g_2) - t \lesssim r$. Moreover, $g_2^{-1}k_0 = g_2^{-1}g_1k_1 = k_2$. So $g_2 \in \Gamma_{r+C_0}(k_0)$ (for some constant $C_0>0$ which may depend on $T$ and $(\Gamma,d)$ but not on $r$ or the $k_i$'s). Thus $k_2 \in \cB_{r+C_0}^{}(k_0)$. Because $k_0,k_1,k_2$ are arbitrary, this establishes that for any $k_0\in \partial \Gamma\times [0,T)_\L$,
$$\cB_{r+C_0}^{}(k_0) \supset \bigcup_{s \le r} \cB_s^{-1}\cB_r(k_0).$$
By Lemma \ref{lem:volume2}, there is a constant $C_1>0$ such that $\left|\cB_{r+C_0}^{}(k_0)\right| \le C_1 \left|\cB_{r}^{}(k_0)\right|$. Therefore,
$$\left|\bigcup_{s \le r} \cB_s^{-1}\cB_r(k_0) \right| \le C_1 \left|\cB_{r}^{}(k_0)\right|.$$
Since $C_1$ does not depend on $r$ or $k_0$, $\cB^{}$ is regular.
\end{proof}

\begin{lem}\label{lem:regular-trick}
Let $W$ be a set. Let $\cF=\{\cF_r\}_{r>0}$ be a regular family of subset functions on $W$. Suppose there is a constant $C>0$ and a family $\cG=\{\cG_r\}_{r>0}$ of subset functions on $W$ that satisfies $\cG_r(w) \subset \cF_r(x)$ and $|\cG_r(w)| \ge C |\cF_r(w)|$ for every $w\in W$. Then $\cG$ is regular.
\end{lem}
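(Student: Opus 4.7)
The plan is to prove this by a simple monotonicity argument: since each $\cG_r(w)$ sits inside $\cF_r(w)$, the "past neighborhood" $\bigcup_{s\le r}\cG_s^{-1}\cG_r(w)$ can only shrink compared to $\bigcup_{s\le r}\cF_s^{-1}\cF_r(w)$, while the hypothesis $|\cG_r(w)|\ge C|\cF_r(w)|$ replaces $|\cF_r(w)|$ by $|\cG_r(w)|$ in the denominator at the cost of an extra factor $C^{-1}$. I expect no real obstacle here; the only thing to verify carefully is that the inclusion of subset functions propagates to the inverse subset functions.

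First I would note that if $\cG_s(w)\subset \cF_s(w)$ for every $w$, then for any $w'$,
$$\cG_s^{-1}(w') = \{w\in W:\ w'\in \cG_s(w)\} \subset \{w\in W:\ w'\in \cF_s(w)\} = \cF_s^{-1}(w').$$
Taking the union over $w'\in \cG_r(w)\subset \cF_r(w)$ and then over $s\le r$ gives
$$\bigcup_{s\le r}\cG_s^{-1}\cG_r(w) \ \subset\ \bigcup_{s\le r}\cF_s^{-1}\cF_r(w).$$

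Next I would invoke the regularity of $\cF$, which gives a constant $C_{\cF}>0$ with
$$\left|\bigcup_{s\le r}\cF_s^{-1}\cF_r(w)\right| \le C_{\cF}\,|\cF_r(w)|$$
for a.e.\ $w$. Combining this with the above inclusion and the lower bound $|\cF_r(w)|\le C^{-1}|\cG_r(w)|$ yields
$$\left|\bigcup_{s\le r}\cG_s^{-1}\cG_r(w)\right| \le C_{\cF}\,|\cF_r(w)| \le C_{\cF}C^{-1}\,|\cG_r(w)|,$$
so $\cG$ is regular with regularity constant $C_{\cF}/C$, completing the proof.
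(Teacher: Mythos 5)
Your proof is correct and follows essentially the same argument as the paper: the inclusion $\bigcup_{s\le r}\cG_s^{-1}\cG_r(w)\subset\bigcup_{s\le r}\cF_s^{-1}\cF_r(w)$ followed by the regularity of $\cF$ and the cardinality lower bound, yielding the regularity constant $C_\cF/C$. Your explicit verification that the inclusion passes to the inverse subset functions is a detail the paper leaves implicit, but the argument is identical.
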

\begin{proof}
Let $C_\cF$ be a regularity constant for $\cF$. Then for any $r>0$ and $w\in W$,
\begin{eqnarray*}
  | \cup_{s\le r} \cG^{-1}_s\cG^{}_r(w) | \le  | \cup_{s\le r} \cF^{-1}_s\cF^{}_r(w) |  \le C_\cF |\cF^{}_r(w)| \le C^{-1} C_\cF |\cG^{}_r(w)|.
  \end{eqnarray*}
    \end{proof}

\begin{cor}\label{cor:regular}
Let $a_0,T_0>0$ be as in Lemma \ref{lem:volume2}. If $a>a_0$ and $T>T_0$ then the family $\cS_{a}=\{\cS_{r,a}\}_{r>0}$ is regular. Moreover, suppose $\epsilon_0>0$ and $\epsilon:\partial\Gamma\times [0,T)_\L \times [0,\infty) \to [-\epsilon_0,\epsilon_0]$ is any function. Define $\tcB^{}_r(\xi,t):=\cB^{}_{r+\epsilon(\xi,t,r)}(\xi,t)$ and $\tcS^{}_{r,a}(\xi,t):=\tcB^{}_r(\xi,t)\setminus \tcB^{}_{r-a}(\xi,t)$ then  $\tcB^{}:=\{\tcB^{}_{r}\}_{r>0}$ and $\tcS_a^{}:=\{\tcS^{}_{r,a}\}_{r>0}$  are regular.
\end{cor}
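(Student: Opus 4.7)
The plan is to deduce both statements from the regularity of $\cB$ established in the previous lemma by invoking Lemma \ref{lem:regular-trick}. Since $\cS_{r,a}(\xi,t) \subset \cB_r(\xi,t)$ is automatic, the first assertion reduces to showing $|\cS_{r,a}(\xi,t)| \ge c\,|\cB_r(\xi,t)|$ uniformly in $(\xi,t)$ and $r$, for some $c > 0$ depending on $a$ and $T$. For $r \ge 2T + 2a$, Lemma \ref{lem:volume2} yields
\[
\frac{|\cB_{r-a}(\xi,t)|}{|\cB_r(\xi,t)|} \le C_0^2\,e^{-\fh a/2},
\]
where $C_0$ is the constant of that lemma; enlarging $a_0$ if necessary to force $C_0^2 e^{-\fh a_0/2} \le 1/2$, this gives $|\cS_{r,a}| \ge \tfrac12 |\cB_r|$ in that regime. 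For the complementary range $0 < r < 2T + 2a$, discreteness of $\Gamma$ makes $|\cB_r(\xi,t)|$ uniformly bounded by a constant $M = M(T,a)$, so the regularity inequality holds either trivially (when $\cS_{r,a} = \emptyset$) or with a constant-over-constant bound via the regularity of $\cB$ (when $\cS_{r,a}$ is nonempty).

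For the perturbed families, the bounds $\epsilon(\xi,t,r) \in [-\epsilon_0,\epsilon_0]$ give the sandwich
\[
\cB_{r-\epsilon_0}(\xi,t) \;\subset\; \tcB_r(\xi,t) \;\subset\; \cB_{r+\epsilon_0}(\xi,t),
\]
and, directly from the definition of inverse subset function, the containment $\tcB_s^{-1}(\eta) \subset \cB_{s+\epsilon_0}^{-1}(\eta)$. Hence
\[
\bigcup_{s \le r} \tcB_s^{-1}\tcB_r(\xi,t) \;\subset\; \bigcup_{s' \le r+\epsilon_0} \cB_{s'}^{-1}\cB_{r+\epsilon_0}(\xi,t),
\]
whose cardinality is at most $C_\cB\,|\cB_{r+\epsilon_0}(\xi,t)|$ by regularity of $\cB$. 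Another application of Lemma \ref{lem:volume2} gives $|\cB_{r+\epsilon_0}| \le C'\,|\cB_{r-\epsilon_0}| \le C'\,|\tcB_r|$ in the large-$r$ regime, with the small-$r$ regime handled as before; this proves $\tcB$ is regular. Finally, regularity of $\tcS_a$ follows by another application of Lemma \ref{lem:regular-trick}, this time with $\cF = \tcB$ and $\cG = \tcS_a$. The required bound $|\tcS_{r,a}| \ge c'\,|\tcB_r|$ comes from the analogous volume-ratio estimate $|\tcB_{r-a}|/|\tcB_r| \le C''\,e^{-\fh a/2}$, after further enlarging $a_0$ by an $\epsilon_0$-dependent amount.

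The only real obstacle is bookkeeping: isolating the regime where Lemma \ref{lem:volume2} applies from the bounded-radius regime, and keeping track of how the various constants (in particular the ultimate choice of $a_0$) depend on $(\Gamma,d)$, $T$, and $\epsilon_0$. All the geometric content is already contained in Lemmas \ref{lem:volume2} and \ref{lem:regular-trick}, and the argument is essentially a two-page exercise in carefully propagating these inputs through the definitions.
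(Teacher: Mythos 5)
Your proposal is correct and follows essentially the same route as the paper, whose proof is simply the observation that the corollary follows from Lemma \ref{lem:volume2} together with Lemma \ref{lem:regular-trick} (applied to $\cS_a\subset\cB$ and to the $\epsilon_0$-sandwich $\cB_{r-\epsilon_0}\subset\tcB_r\subset\cB_{r+\epsilon_0}$); you have merely written out the volume-ratio estimates and the small-$r$/empty-set cases that the paper leaves implicit. One small simplification: Lemma \ref{lem:volume2} already gives the lower bound $|\cS_{r,a}|\ge C^{-1}e^{\fh r/2}\ge C^{-2}|\cB_r|$ directly for $r\ge 2T+2a$, so no enlargement of $a_0$ is needed for the unperturbed family, though your observation that the perturbed case requires $a$ large relative to $\epsilon_0$ is a legitimate point the paper glosses over.
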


\begin{proof}
This follows immediately from Lemmas \ref{lem:volume2} - \ref{lem:regular-trick}.
\end{proof}


\begin{cor}\label{cor:mu}
Let $\psi \in L^q(\partial \Gamma,\nu)$ be a probability distribution  (so $\psi \ge 0$ and $\int \psi~d\nu=1$). For $(\xi,t) \in \partial\Gamma\times [0,T)_\L$, let 
$$\Gamma_{r,a}(\xi,t):=\{g\in \Gamma:~ g^{-1}(\xi,t) \in \cS_{r,a}(\xi,t)\}. $$
Define 
$$\kappa^\psi_{r,a}(g) = \frac{1}{T}\int_0^T \int |\Gamma_{r,a}(\xi,t)|^{-1} 1_{\Gamma_{r,a}(\xi,t)}(g^{-1})\psi(\xi)~ d\nu(\xi) dt.$$
If $a$ is sufficiently large then $\{\kappa^\psi_{r,a}\}_{r>0}$ satisfies the strong $L^p$ maximal inequality for all $p>1$ with $\frac{1}{p}+\frac{1}{q} \le 1$. Moreover, if $\psi \in L^\infty(\partial\Gamma,\nu)$, then $\{\kappa^\psi_{r,a}\}_{r>0}$ satisfies the $L\log L$ maximal inequality.

\end{cor}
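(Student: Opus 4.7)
The plan is to view this corollary as a direct application of the general ergodic-theoretic framework developed in Section \ref{sec:er}. Concretely, I would invoke Theorem \ref{thm:III_1} (when the boundary action has type $III_1$) or Theorem \ref{thm:III_lambda} (when it has type $III_\lambda$ for some $\lambda\in(0,1)$), each time with the subset-function family $\cF_r := \cS_{r,a}$ on the base $\partial\Gamma \times [0,T)_\L$. All the structural hypotheses on $\Gamma \cc (\partial\Gamma,\nu)$ required by these theorems are already in place: by Section \ref{sec:type} the action has some type $III_\lambda$, Lemma \ref{lem:qc-type} ensures that when $\lambda\in(0,1)$ we may take $R_\lambda$ to be $\Z$-valued, and Lemma \ref{lem:stab} supplies uniformly bounded stabilizers. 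In the case $\lambda\in(0,1)$ one should additionally choose $T$ to be a positive integer larger than the constant $T_0$ from Lemma \ref{lem:volume2}.

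The only geometric input needed by the first bullet of Theorems \ref{thm:III_1} and \ref{thm:III_lambda} is the regularity of the averaging family, and this is precisely the content of Corollary \ref{cor:regular}: for $a > a_0$ and $T > T_0$, the family $\cS_a = \{\cS_{r,a}\}_{r>0}$ is regular. Therefore those theorems apply, and yield that the probability measures $\zeta_r$ they construct satisfy the strong $L^p$-maximal inequality whenever $\psi \in L^q(\partial\Gamma,\nu)$ with $\frac{1}{p}+\frac{1}{q} = 1$, together with the $L\log L$-maximal inequality when $\psi \in L^\infty(\partial\Gamma,\nu)$.

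The last step is a bookkeeping identification: the measure $\kappa^\psi_{r,a}$ defined in the statement coincides, up to the cosmetic involution $g \leftrightarrow g^{-1}$, with the $\zeta_r$ provided by Theorem \ref{thm:III_1} or \ref{thm:III_lambda} applied with $\cF_r = \cS_{r,a}$. This uses the definition $\Gamma_{r,a}(\xi,t) = \{g\in\Gamma : g^{-1}(\xi,t) \in \cS_{r,a}(\xi,t)\}$ together with the cardinality identity $|\{w\in\Gamma : w(\xi,t)\in\cS_{r,a}(\xi,t)\}| = |\Gamma_{r,a}(\xi,t)|$, coming from the bijection $g\mapsto g^{-1}$ on $\Gamma$; the resulting change of variables is harmless because the action of $\Gamma$ on the target $(X,m)$ is measure-preserving, and hence the two convolution operators have identical $L^p$ and $L\log L$ maximal norms. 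No real obstacle appears at this point: the substantive work, namely the regularity of $\cS_a$ established in the previous subsection and the abstract Theorems \ref{thm:III_1}, \ref{thm:III_lambda} proved in Section \ref{sec:general}, is already in hand, and what remains is merely to assemble these ingredients.
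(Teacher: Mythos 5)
Your proposal is correct and follows essentially the same route as the paper, whose proof of this corollary is exactly the citation of Corollary \ref{cor:regular} (regularity of $\cS_a$), Lemma \ref{lem:stab} (uniformly bounded stabilizers), and Theorem \ref{thm:III_1} (with Theorem \ref{thm:III_lambda} implicitly covering the $\lambda\in(0,1)$ case, as you spell out). Your extra bookkeeping on the $g\leftrightarrow g^{-1}$ identification and the reduction of $\frac{1}{p}+\frac{1}{q}\le 1$ to the conjugate-exponent case via $L^q\subset L^{p'}$ on a probability space is harmless elaboration of what the paper leaves implicit.
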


\begin{proof}
This follows from the previous corollary, Lemma \ref{lem:stab} and Theorem \ref{thm:III_1}. 
\end{proof}

\subsection{Bounding the ball averages}

We now turn to show that  when choosing $\psi=1$, the integral $\kappa_{r,a}^1$ of the averaging sets (defined above) dominates the uniform measure $\beta_r$ on the group.  
A preliminary step is to show that $\kappa_{r,a}^1$ dominates the measure uniformly distributed on a spherical shell $S_{r,a}=S_{r,a}(e)$. 

To that end, for each  $r, b>0$, let $\zeta_{r,b}$ denote the probability measure on $\Gamma$ which is distributed uniformly on the  spherical shell  $B(e,r-a/2+b/2)\setminus B(e,r-a/2-b/2)$. Then the following estimate holds. 

\begin{prop}\label{prop:maximalkey}
For $b$ sufficiently large (depending on $(\Gamma,d)$) and for $a, T$  sufficiently large (depending on $b$ and $(\Gamma,d)$) there is a constant $C>0$ (which may depend on $a,b,T$) such that $\zeta_{r,b} \le C \kappa^1_{r,a}$ for all $r>0$.
\end{prop}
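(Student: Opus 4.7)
The strategy is to estimate $\kappa^1_{r,a}(g)$ pointwise from below for $g$ in the support of $\zeta_{r,b}$ and then match exponents with $\zeta_{r,b}(g) \asymp |S|^{-1}$ where $S$ is the shell. Throughout, the constant $C$ is allowed to depend on $a, b, T$ but not on $r$.

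First I would unwind the definition. By Definition~\ref{defn:Folner} together with (\ref{eqn:R-h}), the event $g^{-1} \in \Gamma_{r,a}(\xi,t)$ is equivalent (up to uniform $O(1)$ errors) to a pair of conditions on $s := h_\xi(g^{-1})$ and $t$: (i) the horoshell condition $s + t \in [-a/2 + \eta,\, a/2 + \eta)$, where $\eta = |g| - (r - a/2) \in (-b/2, b/2]$, and (ii) the Maharam strip condition $s \in [-t/\fh_\lambda,\, (T-t)/\fh_\lambda)$. Integrating the indicator over $t$ first by Fubini converts the relevant double integral into $\int F(s)\, d\mu(s)$, where $F(s)$ is the length of intersection of the two resulting $t$-intervals (of respective lengths $a$ and $T - \fh_\lambda |s|$) with $[0,T)$, and $\mu$ is the pushforward of $\nu$ under $\xi \mapsto h_\xi(g^{-1})$.

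Using the identity $h_\xi(g^{-1}) = |g| - 2(g^{-1}|\xi)_e + O(\delta)$ together with the Patterson--Sullivan shadow estimate (Lemma~\ref{lem:conformal}(2), applied with $g^{-1}$ approached from the boundary along a quasi-geodesic), the measure $\mu$ has density comparable to $(\fh/2) e^{-\fh(|g| - s)/2}$ on $\{s \le |g|\}$. For $a$ sufficiently large in terms of $T, b, \fh_\lambda$ (specifically $a \ge 2T/\min(1, \fh_\lambda) + b$), condition (i) does not truncate the $t$-interval from (ii) on the range $|s| \le T/\fh_\lambda$, hence $F(s) = T - \fh_\lambda |s|$ there. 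A direct computation (change of variable $u = T/\fh_\lambda - s$ in the second half of the integral) then yields $\int F(s)\, d\mu(s) \gtrsim e^{-\fh |g|/2} \cdot e^{\fh T/(2\fh_\lambda)}$ with an implied constant depending only on $\fh, \fh_\lambda$. Dividing by $|\Gamma_{r,a}(\xi,t)| \asymp e^{\fh r/2}$ from Lemma~\ref{lem:volume2} and by $T$, and writing $|g| = r - a/2 + \eta$ with $\eta \le b/2$, I obtain
\[
\kappa^1_{r,a}(g) \ \gtrsim\ T^{-1} e^{-\fh r\, +\, \fh a/4\, -\, \fh b/4\, +\, \fh T/(2\fh_\lambda)}.
\]

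Finally, once $b$ is large enough (depending on the constants in Lemma~\ref{lem:conformal}(1)) that $|B(e, r - a/2 + b/2) \setminus B(e, r - a/2 - b/2)| \asymp e^{\fh(r - a/2 + b/2)}$, one has $\zeta_{r,b}(g) \le C_1 e^{-\fh r + \fh a/2 - \fh b/2}$ for $g$ in the shell. The ratio $\zeta_{r,b}(g)/\kappa^1_{r,a}(g) \lesssim T \cdot e^{\fh(a - b)/4 - \fh T/(2\fh_\lambda)}$ is then independent of $r$, establishing $\zeta_{r,b} \le C(a,b,T)\, \kappa^1_{r,a}$ for all $r > 0$.

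\textbf{Main obstacle.} The most delicate point is the simultaneous calibration of $a$ against $T$ (and $\fh_\lambda$): $a$ must be large enough to keep condition (i) non-binding on the exponentially-weighted support of $\mu$ near $s = T/\fh_\lambda$, so that the full gain $e^{\fh T/(2\fh_\lambda)}$ is realized in the lower bound on $\kappa^1_{r,a}$. Since $\fh_\lambda$ can be any positive real (it equals $\fh$ when $\lambda = 1$ and $-\fh/\log\lambda$ when $\lambda \in (0,1)$), handling this calibration uniformly across the possible types of the boundary action requires careful interval arithmetic in the Fubini step. A secondary technical point is verifying that the $\nu$-measure estimate of Lemma~\ref{lem:conformal}(2) extends from boundary Gromov products $(\xi|\xi')_e$ to $(g^{-1}|\xi)_e$ with $g^{-1} \in \Gamma$; this is a standard Patterson--Sullivan shadow argument using the quasi-conformality of $\nu$ and the properness of the metric.
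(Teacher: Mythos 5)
Your argument is correct and follows essentially the same route as the paper: both lower-bound $\kappa^1_{r,a}(g)$ pointwise by a Fubini computation over $(\xi,t)$, combining the volume estimates of Lemmas \ref{lem:conformal} and \ref{lem:volume2} with the horoball shadow estimate $\nu(\{\xi:\ |h_\xi(g)|\le T_2\}) \asymp e^{-\fh d(e,g)/2}$ (Lemma \ref{lem:T1}), and then compare against $\zeta_{r,b}(g)\lesssim e^{-\fh r}$ from the shell volume. The only real difference is that the paper sidesteps your density estimate for the law of $h_\xi(g^{-1})$ at general $s$ (and the attendant interval arithmetic) by simply restricting the integration to $|h_\xi(g)|\le T_2$ and to a middle sub-interval of $t\in[0,T]$, so that Lemma \ref{lem:T1} centered at $0$ is the only required input, at the cost of discarding the extra factor $e^{\fh T/(2\fh_\lambda)}$ that your sharper computation retains.
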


To prove Proposition \ref{prop:maximalkey}, we need the following geometric lemmas.

\begin{lem}\label{lem:existence}
There is a constant $C>0$ such that for any $g\in \Gamma$ there exists $\eta \in \partial \Gamma$ with $|h_\eta(g)| \le C$.
\end{lem}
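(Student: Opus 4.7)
The plan is to find $\eta \in \partial\Gamma$ whose Gromov product with $g$ is approximately $d(e,g)/2$; equation (\ref{eqn:horo}) combined with the identity $d(x_i,g) - d(x_i,e) = d(e,g) - 2(x_i|g)_e$ then forces $h_\eta(g) = d(e,g) - 2(\eta|g)_e + O(\delta)$, so such an $\eta$ will give $|h_\eta(g)|$ bounded by a uniform constant. I may assume $d(e,g) \ge D_0$ for $D_0$ chosen later, since for $d(e,g) < D_0$ any $\eta$ yields $|h_\eta(g)| \le d(e,g) \le D_0$ trivially.

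The first step is to produce $\eta_+ \in \partial\Gamma$ with $(\eta_+|g)_e \ge d(e,g) - K_1$ for a uniform constant $K_1$. This is a standard shadow-lemma statement: using non-elementarity together with uniform quasi-geodesicity, one extends a $(1,c)$-quasi-geodesic from $e$ through $g$ out to the boundary, and the extended ray then witnesses the claimed bound via (\ref{eqn:gromovproduct}). Setting $D = d(e,g)$ and $n = \lfloor D/2 \rfloor$, the conformal estimate in Lemma~\ref{lem:conformal}(2) applied to $\eta_+$ gives
$$\nu\bigl(\{\xi: (\xi|\eta_+)_e \ge n\}\bigr) \ge C^{-1} e^{-\fh n}, \qquad \nu\bigl(\{\xi: (\xi|\eta_+)_e \ge n+K_2\}\bigr) \le C e^{-\fh(n+K_2)};$$
choosing $K_2$ large enough that $Ce^{-\fh K_2} < C^{-1}$ makes the annulus $\{\xi: n \le (\xi|\eta_+)_e < n+K_2\}$ have positive $\nu$-measure, hence non-empty, and I pick $\xi$ in it.

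The final step is to transfer the bound $(\xi|\eta_+)_e \in [n, n+K_2]$ to a bound on $(\xi|g)_e$. Two applications of the $\delta$-thin-triangle inequality (\ref{eqn:gromov}), together with $(\eta_+|g)_e \ge D - K_1$, yield $|(\xi|g)_e - (\xi|\eta_+)_e| = O(\delta)$ provided $D_0$ is taken large enough that $n + K_2 + \delta < D - K_1$ (the ``other'' case of the min in (\ref{eqn:gromov}) is ruled out by this inequality). Hence $(\xi|g)_e = D/2 + O(1)$, and (\ref{eqn:horo}) delivers $|h_\xi(g)| \le C$ for a constant depending only on $(\Gamma,d)$ and $\nu$. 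The main technical point is Step 1 (producing $\eta_+$), which uses non-elementarity essentially to guarantee enough ``room'' to continue the quasi-geodesic past $g$; once $\eta_+$ is at hand, the remaining steps are routine hyperbolic bookkeeping.
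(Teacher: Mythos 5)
Your proof is correct, but it routes through the boundary differently from the paper, so a comparison is worthwhile. The paper works from the \emph{midpoint} $z$ of a $(1,c)$-quasi-geodesic from $e$ to $g$: using only the upper bound on shadow measures (its Claim 1, a consequence of Lemma \ref{lem:conformal}(2)), it produces a single $\eta\in\partial\Gamma$ with $(\eta|e)_z$ and $(\eta|g)_z$ both uniformly bounded, whence $h_\eta(g)\approx d(g,z)-d(e,z)\approx 0$. You instead work from the basepoint $e$: you first place $\eta_+$ in the shadow of $g$ (i.e.\ $(\eta_+|g)_e\ge d(e,g)-K_1$) and then use \emph{both} inequalities of Lemma \ref{lem:conformal}(2) to pinch a second point $\xi$ into the annulus $n\le(\xi|\eta_+)_e<n+K_2$ with $n=\lfloor d(e,g)/2\rfloor$, transferring this to $(\xi|g)_e=d(e,g)/2+O(1)$ and hence $h_\xi(g)=O(1)$. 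Both arguments rest on the same measure-theoretic input, but yours needs the two-sided shadow estimate where the paper needs only the upper bound, and yours carries the extra Step 1. On that step: the existence of $\eta_+$ is true but is not a free consequence of ``extending a quasi-geodesic'' --- to extend past $g$ you must first know there is a direction to extend in, namely a boundary point $\xi_0$ with $(\xi_0|g^{-1})_e$ uniformly bounded, and this is again the shadow estimate (it is precisely the paper's Claim 1 applied to $g^{-1}$). Once such a $\xi_0$ is in hand, the identity $(gh|g)_e=d(e,g)-(g^{-1}|h)_e$ together with (\ref{eqn:gromovproduct}) gives $\eta_+=g\xi_0$ directly, with no quasi-geodesic construction needed. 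So your Step 1 is fillable with the tools already available, and the remaining bookkeeping (the two applications of (\ref{eqn:gromov}) and the passage from $(\xi|g)_e$ to $h_\xi(g)$ via (\ref{eqn:horo})) is sound.
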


\begin{proof}
\noindent {\bf Claim 1}. There is a constant $C_0>0$ such that for any $g\in \Gamma$ and $r>0$ 
$$\nu(\{\xi \in \partial \Gamma:~ (\xi|g)_e > r\}) \le C_0 e^{-\fh r}.$$
\begin{proof}[Proof of Claim 1]
If $\xi, \eta \in \partial \Gamma$ satisfy $(\xi|g)_e, (\eta|g)_e > r$ then by (\ref{eqn:gromov}),
$$(\xi|\eta)_e \ge \min\{ (\xi|g)_e, (\eta|g)_e\} - \delta > r-\delta.$$
The claim now follow from Lemma \ref{lem:conformal}.
\end{proof}
\noindent {\bf Claim 2}. There is a constant $R>0$ such that for any $x,y,z \in \Gamma$ there exists $\eta \in \partial \Gamma$ such that $(\eta|x)_z, (\eta|y)_z \le R$.
\begin{proof}[Proof of Claim 2]
It follows from Claim 1 that there is an $R>0$ (independent of $x,y,z$) such that 
$$\nu(\{\xi \in \partial \Gamma:~ (\xi|z^{-1}x)_e > R\}) + \nu(\{\xi \in \partial \Gamma:~ (\xi|z^{-1}y)_e > R\}) < 1.$$
Therefore, there is an $\xi \in \partial \Gamma$ such that $(\xi|z^{-1}x)_e, (\xi|z^{-1}y)_e \le R$. But 
$$(\xi|z^{-1}x)_e=(z\xi|y)_z, (\xi|z^{-1}y)_e = (z\xi|y)_z.$$
So set $\eta=z\xi$.
\end{proof}
Now let $g \in \Gamma$. Because $|h_\xi(g)| \le d(e,g)$ (for any $g \in \Gamma, \xi \in \partial \Gamma$) we may assume without loss of generality, that $d(e,g)>c$. Let $\gamma:I \to \Gamma$ be a $(1,c)$-quasi-geodesic from $e$ to $g$ (where $I=[0,r]$ is some interval in the real line). Let $z  = \gamma(d(e,g)/2)$. By Claim 2, there exist a constant $R>0$ and $\eta \in \partial \Gamma$ such that $(\eta|g)_z, (\eta|e)_z \le R$. Using (\ref{eqn:gromovproduct}, \ref{eqn:horo}) we obtain: \begin{eqnarray*}
2R &\ge& 2(\eta|g)_z = h_\eta(z) + d(g,z) - h_\eta(g) +O(\delta) \\
2R &\ge& 2(\eta|e)_z = h_\eta(z) + d(e,z) +O(\delta).
\end{eqnarray*}
Because $d(g,z)  = d(e,z) +O(c)$,
$$R \ge |2(\eta|g)_z-2(\eta|e)_z| \ge |h_\eta(g)| +O(\delta + c).$$ 
\end{proof}

\begin{lem}\label{lem:T1}
There exists a constant $T_1>0$ such that for any $T>T_1$ and any $g\in \Gamma$, 
$$C^{-1}\exp(-\fh d(e,g)/2) \le \nu(\{ \xi\in \partial \Gamma:~ |h_\xi(g)| \le T\})  \le C\exp(-\fh d(e,g)/2)$$
for some constant $C>0$ that is independent of $g$ (but may depend on $T$).
\end{lem}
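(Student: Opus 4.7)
The key idea is to rewrite the condition $|h_\xi(g)|\le T$ as a constraint on the Gromov product $(\xi|g)_e$, transfer it to a boundary-to-boundary Gromov product using the hyperbolicity inequality (\ref{eqn:gromov}), and then apply the shadow-type estimate in Lemma \ref{lem:conformal}(2). From (\ref{eqn:horo}) together with the definition of the extended Gromov product one obtains, uniformly in $g\in\Gamma$ and $\xi\in\partial\Gamma$,
\[
h_\xi(g) \;=\; d(e,g) - 2(\xi|g)_e + O(\delta),
\]
so $|h_\xi(g)|\le T$ is, up to an additive $O(\delta)$ error, the condition $(\xi|g)_e \in \bigl[\tfrac{d(e,g)-T}{2},\,\tfrac{d(e,g)+T}{2}\bigr]$. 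The plan is to use a fixed proxy boundary point for $g$: by Lemma \ref{lem:existence} there is a constant $C_0>0$ and, for each $g\in\Gamma$, a point $\eta=\eta(g)\in\partial\Gamma$ with $|h_\eta(g)|\le C_0$, equivalently $(\eta|g)_e = d(e,g)/2 + O(C_0+\delta)$. I will then compare $(\xi|g)_e$ with $(\xi|\eta)_e$ and invoke Lemma \ref{lem:conformal}(2) applied to $\eta$.

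\paragraph{Upper bound.} For any $\xi$ with $|h_\xi(g)|\le T$, applying (\ref{eqn:gromov}) to the triple $\xi,\eta,g$ based at $e$ yields
\[
(\xi|\eta)_e \;\ge\; \min\{(\xi|g)_e,\,(\eta|g)_e\} - \delta \;\ge\; d(e,g)/2 - T/2 - O(C_0+\delta).
\]
Hence $\{\xi:|h_\xi(g)|\le T\}\subset \{\xi:(\xi|\eta)_e\ge d(e,g)/2 - C_T\}$ for some constant $C_T$ depending only on $T$ and $(\Gamma,d)$, and the upper estimate in Lemma \ref{lem:conformal}(2) gives $\nu(\{|h_\xi(g)|\le T\})\le C\,e^{-\fh d(e,g)/2}$ for every $T>0$.

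\paragraph{Lower bound and main obstacle.} For the matching lower bound I will use the standard ``ultrametric-with-defect'' consequence of (\ref{eqn:gromov}): if $(\xi|\eta)_e \ge (\eta|g)_e + 2\delta$, then applying (\ref{eqn:gromov}) to the two natural orderings of the triple $\{\xi,\eta,g\}$ at $e$ forces $|(\xi|g)_e - (\eta|g)_e|\le \delta$. Combined with the location of $(\eta|g)_e$ in the $O(C_0+\delta)$-window around $d(e,g)/2$, this gives $|h_\xi(g)|\le C_0 + O(\delta)=:T_1$. Therefore, for every $T\ge T_1$,
\[
\{\xi\in\partial\Gamma:\,(\xi|\eta)_e \ge d(e,g)/2 + C_0/2 + 2\delta\} \;\subset\; \{\xi:\,|h_\xi(g)|\le T\},
\]
and the lower estimate in Lemma \ref{lem:conformal}(2) applied to $\eta$ bounds the $\nu$-measure of the left-hand set from below by a constant times $e^{-\fh d(e,g)/2}$. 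The principal delicate point is precisely this ``ultrametric with defect'' step, which is why we need $T\ge T_1$; for those finitely many (up to the uniformly bounded stabilizers) $g$ with $d(e,g)$ smaller than a constant depending on $T$, the set $\{|h_\xi(g)|\le T\}$ may coincide with $\partial\Gamma$ and both sides of the asserted inequality have fixed order, which is absorbed by enlarging the constant $C$.
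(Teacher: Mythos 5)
Your proposal is correct and follows essentially the same route as the paper: both use Lemma \ref{lem:existence} to produce a proxy boundary point at height $d(e,g)/2+O(1)$, apply the $\delta$-hyperbolicity inequality (\ref{eqn:gromov}) twice to show that points deep in its shadow have $(\xi|g)_e$ within $O(\delta)$ of the proxy's (hence $|h_\xi(g)|\le T_1$), and conclude with the shadow estimate of Lemma \ref{lem:conformal}(2) for both bounds. The only cosmetic differences are the swapped roles of $\xi$ and $\eta$ and your fixed (rather than $T$-dependent) shadow threshold in the lower bound.
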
 

\begin{proof}
By Lemma \ref{lem:existence}, there exist a constant $C_0>0$ and $\xi \in \partial \Gamma$ satisfying $|h_\xi(g)|\le C_0$. Note
$$(\xi|g)_e = (1/2)(d(e,g) - h_\xi(g)) + O(\delta) \le d(e,g)/2 + C_0/2 + O(\delta).$$
Suppose $\eta \in \partial \Gamma$ satisfies $(\xi|\eta)_e > \max\{d(e,g)/2, (\xi|g)_e+\delta\}$. Then 
$$(\eta|g)_e \ge \min\{ (\eta|\xi)_e, (\xi|g)_e\}  - \delta=(\xi|g)_e-\delta.$$
Similarly,
$$(\xi|g)_e \ge \min \{ (\xi|\eta)_e, (\eta|g)_e \}-\delta.$$
Since $(\xi|\eta)_e >(\xi|g)_e +\delta$, we must have $(\xi|g)_e \ge (\eta|g)_e-\delta$. Thus $(\xi|g)_e = (\eta|g)_e +O(\delta)$. Since $(\xi|g)_e = (1/2)(d(e,g) - h_\xi(g))+O(\delta)$ (and a similar formula holds for $\eta$), this implies $h_\xi(g) = h_\eta(g)+O(\delta)$. So if $T>0$ is sufficiently large,
$$\{\eta \in \partial \Gamma:~ (\xi|\eta)_e \ge d(e,g)/2+2T\} \subset \{ \eta \in \partial \Gamma:~ |h_\eta(g)| \le T\}).$$
Lemma \ref{lem:conformal} now implies the lower bound.

To obtain the upper bound, suppose that $\eta,\xi \in \partial \Gamma$ satisfy $|h_\xi(g)|, |h_\eta(g)| \le R$ for some constant $R>0$. Then 
$$(\xi|\eta)_e \ge \min\{(\xi|g)_e, (\eta|g)_e \} -\delta \ge d(e,g)/2 - T_0 + O(\delta)$$
implies that for $R>0$ large enough
$$\{\eta' \in \partial \Gamma:~ (\xi|\eta')_e \ge d(e,g)/2-2R\} \supset \{ \eta' \in \partial \Gamma:~ |h_{\eta'}(g)| \le R\}).$$
Lemma \ref{lem:conformal} now implies the upper bound.
\end{proof}

\begin{proof}[Proof of Proposition \ref{prop:maximalkey}]
Recall that  by Lemma   \ref{lem:conformal}, $C^{-1} e^{ \fh r} \le |B(e,r)| \le C e^{\fh r}$ for all $r>0$. 
Choose $b >  \max\set{ a_0, \frac{1}{\fh}\log C}$, $T_2 \ge \max(T_0,T_1)$, $T \ge 10T_2$ and $a > 2(b+T)$ where $a_0,T_0$ are as in Lemma \ref{lem:volume2} and $T_1$ is as in Lemma \ref{lem:T1}.

By definition, $\zeta_{r,b}$ is uniformly distributed on $B(e,r-a/2+b/2) \setminus B(e,r-a/2-b/2)$.  It follows that 
$$\abs{B(e,r-a/2+b/2)\setminus B(e,r-a/2-b/2)}\ge C^{-1} e^{ \fh (r-a/2+b/2)}- C e^{\fh (r-a/2-b/2)}$$
$$= e^{\fh ( r-a/2)}\left(C^{-1}-C  e^{-2\fh b}\right)\ge C(\fh, b) e^{\fh (r-a/2)}\,\,.$$
Therefore, for each $g\in \Gamma$,
$$\zeta_{r,b}(g) \le |B(e,r-a/2+b/2) \setminus B(e,r-a/2-b/2)|^{-1}\le C(\fh, b)^{-1}e^{a/2} e^{-\fh r} \,,$$
so that indeed  $\zeta_{r,b}(g^{-1}) \le C^\prime \exp(-\fh r)$ for some constant $C^\prime>0$ and all $r $ sufficiently large, provided $a$ and $b$ satisfy the conditions above.

On the other hand, Lemma \ref{lem:volume2} implies $|\Gamma_{r,a}(\xi,t)| \le C \exp(\fh r/2)$ for some $C>0$ (and every $(\xi,t) \in \partial \Gamma \times [0,T)_\L$). So,
\begin{eqnarray*}
\kappa^1_{r,a}(g) &=& \frac{1}{T}\int_0^T \int |\Gamma_{r,a}(\xi,t)|^{-1} 1_{\Gamma_{r,a}(\xi,t)}(g^{-1})~ d\nu(\xi) dt\\
&\ge & C^{-1}\exp(-\fh r/2) \frac{1}{T}\int_0^T  \nu(\{ \xi \in \partial \Gamma:~ g^{-1} \in \Gamma_{r,a}(\xi,t)\}) ~dt.
\end{eqnarray*}
Definition \ref{defn:Folner} implies that $g \in \Gamma_{r,a}(\xi,t)$ if and only if:
$$r-a<d(e,g)-h_\xi(g)-t \le r, \quad t -R_\lambda(g^{-1},\xi) \in [0,T)_\L.$$
Because $\nu$ is quasi-conformal, there is a constant $\rho\ge 1$ such that 
$$|R_\lambda(g^{-1},\xi)| \le \rho |h_\xi(g)| + \rho$$
for every $\xi \in \partial \Gamma, g\in \Gamma$. By choosing $T$ larger if necessary, we may assume $T>2\rho T_2 + 2\rho$.

Now suppose $(\xi,t) \in \partial\Gamma \times [0,T)_\L$, $g \in B(e,r-a/2+b/2) \setminus B(e,r-a/2-b/2)$, $\rho T_2 + \rho \le t < T-\rho T_2 - \rho$ and $|h_\xi(g)| \le T_2$. Then 
$$r-a < r - a/2 - b/2 - T \le r-a/2-b/2 - T_2  - (T-\rho T_2 - \rho) \le d(e,g) - h_\xi(g) - t \le r - a/2 + b/2 + T_2 \le r.$$
Also 
$$|R_\lambda(g^{-1},\xi)| \le \rho T_2 + \rho \Rightarrow t + h_\xi(g) \in [0,T)_\L.$$
So $g \in \Gamma_{r,a}(\xi,t)$. Lemma \ref{lem:T1} now implies
\begin{eqnarray*}
\kappa^1_{r,a}(g) &\ge& C^{-1}\exp(-\fh r/2) \frac{1}{T}\int_0^T  \nu(\{ \xi \in \partial \Gamma:~ g^{-1} \in \Gamma_{r,a}(\xi,t)\}) ~dt\\
&\ge &C^{-1}\exp(-\fh r/2) \left(\frac{T-2\rho T_2-2\rho}{T}\right) \nu(\{ \xi \in \partial\Gamma:~ |h_\xi(g)| \le T_2 \}) \\
&\ge& C^{-3} \exp(-\fh r)
\end{eqnarray*}
for some (possibly larger) constant $C>0$. So $\zeta_{r,b}(g) \le C^4\kappa^1_{r,a}(g)$ as required. 
\end{proof}

\subsection{Proofs of Theorem \ref{thm:maximal0} and Theorem \ref{thm:word-metrics}}

\begin{proof}[Proof of Theorem \ref{thm:maximal0}]

It follows from Proposition \ref{prop:maximalkey} that if $\Gamma \cc (X,m)$ is any probability-measure-preserving action and $f \in L^p(X,m)$ is nonnegative then $\zeta_{r,b}(f) \le C \kappa^1_{r,a}(f)$. By Corollary \ref{cor:mu} there exist constants $C_p>0$ ($p \ge 1$) such that $\| \sM_\zeta[f] \|_p \le C \|\sM_\kappa[f] \|_p \le C C_p \|f\|_p$ if $p>1$ where
$$\sM_\zeta[f]=\sup_{r>0} \zeta_{r,b}(|f|), \quad \sM_\kappa[f] = \sup_{r>0} \kappa_{r,a}^1(|f|).$$
Similarly, if $f\in L\log L(X,m)$ then $\| \sM_\zeta[f] \|_1 \le C C_1 \|f\|_{L\log L}$. 
Now let $\beta_r$ be the probability measure on $\Gamma$ uniformly distributed over the ball of radius $r$ centered at the identity. 
Let $\sM_\beta[f] = \sup_{r>0} \beta_r(|f|)$. Because $\beta_r$ can be represented as a convex linear combination of probability measures $\{\zeta_{t,b}\}_{t>0}$,  it follows that $\sM_\beta[f] \le  \sM_\zeta[f]$. Thus $\sM_\beta[f] \le CC_p\|f\|_p$ if $f\in L^p(X,m)$ and $\sM_\beta[f] \le CC_1 \|f\|_1$ if $f \in L\log L (X,m)$ as claimed.

As to the averages $\sigma_{r,a}$, recall that by Lemma \ref{lem:conformal},
$$C^{-1} e^{ \fh r} \le |B(e,r)| \le C e^{\fh r}\,\,,\, r>0.$$
So
$$\abs{B(e,r+a)\setminus B(e,r-a)}\ge C^{-1} e^{ \fh (r+a)}- C e^{\fh (r-a)}= \left(C^{-1}-C  e^{-2\fh a}\right)e^{\fh ( r+a)}= C(\fh, a) e^{\fh (r+a)}\,\,.$$
Hence choosing $a$ sufficiently large,  $\sigma_{r,a}\le C^\prime(\fh,a) \beta_{r+a}$ for some constant $C^\prime(\fh,a)$ as probability measures on $\Gamma$, and hence the maximal inequalities for $\beta_r$ imply the maximal inequalities for $\sigma_{r,a}$. Since $\mu_{r,a}$ are convex combinations of $\sigma_{r,a}$, the maximal inequalities hold for them as well.
\end{proof}

\begin{proof}[Proof of Theorem \ref{thm:word-metrics}] 
Let us now consider the case of a word metric, and first note that Lemma  \ref{lem:conformal} can then be given a sharper form, namely 
$C^{-1} e^{ \fh r} \le |S_r(e)| \le C e^{\fh r}\,\,,\, r>0$, where $S_r(e)$ is the sphere with radius $r$ and center $e$.
To demonstrate this fact briefly, recall that every hyperbolic group is a strongly Markov group. In particular, associated to the pair $(\Gamma,S)$ there is a finite graph $(V,E)$  with a distinguished vertex, whose edges are labelled by  the generators $s\in S$, such that the set of elements $w\in \Gamma$ with $\abs{w}_S=n$ is in bijective, length-preserving correspondence with the set of directed paths of length $n$  in the graph, whose initial vertex is the distinguished one. It was established in \cite{CF10}  that the estimate $C^{-1} e^{ \fh r} \le |B(e,r)| \le C e^{\fh r}\,\,,\, r>0$ (due to \cite{Co93}) implies that the adjacency matrix of the graph $(V,E)$ has the following special property.  There exists a positive eigenvalue of maximum modulus, and for all eigenvalues of maximum modulus, the algebraic and geometric multiplicities coincide. 
For such a matrix, the number of directed paths of length $n$ starting at the distinguished vertex, which is equal to the number of words of length $n$ in the group, does indeed satisfy  the desired estimate (see \cite[Lemma. 3.1.4]{Ca11} for more information, and also \cite{Bo10}).  

It then follows that $\sigma_n\le C \beta_n$ as probability measures on the group, and so $\{\sigma_n\}_{n=1}^\infty$ satisfies the maximal inequality satisfied by $\{\beta_n\}_{n=1}^\infty$. The same holds of course for $\{\mu_n\}_{n=1}^\infty$. Viewing $L^\infty(X,m)$ as a norm-dense subspace of $L^p(X,m)$,  if $\pi_X(\mu_n) f$ converges almost everywhere for every bounded function $f$,  standard arguments  using the maximal inequality imply that $\pi_X(\mu_n)f$ converges almost surely for every $f\in L^p$, $ 1 < p \le \infty$ and in $L\log L$. Pointwise almost sure convergence for bounded functions has been established recently  in  \cite[Corollary 1]{BKK11} (and under an additional assumption also in  \cite{PS11}). Given pointwise convergence, as well as the maximal inequality, norm convergence in $L^p$, $ 1 \le p < \infty$  and in $L\log L$  is a straightforward consequence of Lebesgue's dominated convergence theorem.  

\end{proof}

\section{Asymptotic invariance}\label{sec:invariance} 
In order to prove Theorem \ref{thm:main}, we assume for the rest of the paper that the horofunction boundary coincides with the Gromov boundary of $(\Gamma,d)$. This means that whenever $\{g_i\}_{i=1}^\infty \subset \Gamma$ converges to a point $\xi \in \partial \Gamma$ then the horofunction
$$h_\xi(g):=\lim_{i\to\infty} d(g_i,g)-d(g_i,e)$$
is well-defined. In particular, it depends on $\{g_i\}_{i=1}^\infty$ only through $\xi$. 

Define $\cB_r^{}$ and $\cS_{r,a}^{}$ as in Definition \ref{defn:Folner} for some $T \in \L^+$ and $a>0$ with $a \ge a_0, T\ge T_0$ (where $a_0,T_0$ are as in Lemma \ref{lem:volume2}). Let $\Gamma \cc (\partial \Gamma \times \L,\nu \times \theta_\lambda)$ be the Maharam extension and let $E$ be the induced equivalence relation on $\partial\Gamma \times [0,T)_\L$ (notational conventions are explained in \S \ref{sec:type}). So $(\xi,t)E(\xi',t') \Leftrightarrow \exists g \in \Gamma$ such that $g(\xi,t)=(\xi',t')$.

Most of the work in proving Theorem \ref{thm:main} boils down to the next result the proof of which is the goal of this section.

\begin{thm}\label{thm:asymptoticinvariance}
Let $a \ge a_0, T \ge T_0$ where $a_0,T_0$ are as in Lemma \ref{lem:volume2}. For every $\epsilon_0,r>0$ and $(\xi,t) \in \partial \Gamma \times [0,T]$ there exists $0\le \epsilon(\xi,t,r)<\epsilon_0$ such that if $\tcB^{}_r(\xi,t):=\cB^{}_{r+\epsilon(\xi,t,r)}(\xi,t)$ and $\tcS^{}_{r,a}:=\tcB^{}_r(\xi,t)\setminus \tcB^{}_{r-a}(\xi,t)$ then  $\tcB^{}:=\{\tcB^{}_{r}\}_{r>0}$ and $\tcS_a^{}:=\{\tcS^{}_{r,a}\}_{r>0}$  are asymptotically invariant.
\end{thm}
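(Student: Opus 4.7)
The natural countable generating family for the equivalence relation $E$ is $\Phi := \{\phi_h : h \in \Gamma\}$, where each $\phi_h$ is the partial Borel isomorphism $(\eta,s) \mapsto h(\eta,s) = (h\eta, s-R_\lambda(h,\eta))$ on the subset where the image lies in $\partial\Gamma \times [0,T)_\L$ (extended by the identity elsewhere); this collection generates $E$ because $\Gamma$ does. Parameterize the equivalence class $[(\xi,t)]_E$ by $\Gamma$ modulo the uniformly bounded stabilizer of Lemma~\ref{lem:stab} via $g \mapsto g^{-1}(\xi,t)$, so that $\phi_h$ becomes right multiplication $g \mapsto gh^{-1}$ (restricted to those $g$ whose image remains in the fundamental domain). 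Writing $\rho(g) := d(e,g) - h_\xi(g) - t$, the shell $\tcS_{r,a}(\xi,t)$ corresponds to the set $\{g : \rho(g) \in (r-a+\epsilon, r+\epsilon]\}$ with $\epsilon = \epsilon(\xi,t,r)$, and $g$ lies in the symmetric difference $\phi_h(\tcS_{r,a}) \triangle \tcS_{r,a}$ precisely when $\rho(g)$ and $\rho(gh)$ fall on opposite sides of one of the shell boundaries $r+\epsilon$, $r-a+\epsilon$.

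The central computation uses the horofunction cocycle identity $h_\xi(gh) = h_\xi(g) + h_{g^{-1}\xi}(h)$, which holds as a genuine equality (not just up to $O(\delta)$) precisely because the coincidence-of-boundaries hypothesis makes each $h_\eta$ a true horofunction. Combined with the left-invariance identity $d(e,gh) = d(g^{-1},h)$, this yields
\[
D(g,h) \;:=\; \rho(gh) - \rho(g) \;=\; \bigl[d(g^{-1},h) - d(g^{-1},e)\bigr] - h_{g^{-1}\xi}(h).
\]
Both bracketed expressions have natural horofunction interpretations: along a sequence with $g_n^{-1} \to \zeta$ and $g_n^{-1}\xi \to \eta$ in $\partial\Gamma$, the first bracket tends to $h_\zeta(h)$ by the very definition of the horofunction, while the second tends to $h_\eta(h)$ by continuity of $\eta \mapsto h_\eta(h)$ (again a direct consequence of boundary coincidence). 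Thus the key is to establish $\eta = \zeta$ asymptotically for most $g$ in the shell.

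The decisive geometric input is that hyperbolic groups act on $\partial\Gamma$ as convergence groups: for any sequence $g_n^{-1} \to \zeta$ in $\partial\Gamma$, the boundary maps $g_n^{-1}$ converge uniformly on compacta of $\partial\Gamma \setminus \{p\}$ to the constant $\zeta$, for a single ``repelling'' point $p$ depending on the subsequence. Since $\nu$ is non-atomic (Lemma~\ref{lem:conformal}), a careful diagonal/compactness argument together with a shadow-lemma-type count (which transfers the $\nu$-null exceptional set of directions into a sub-exponential count in the shell, using quasi-conformality of $\nu$) yields: for $\nu$-a.e. $\xi$ and every fixed $h \in \Gamma$, $D(g,h) \to 0$ uniformly over $g$ as $d(e,g) \to \infty$.

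Once uniform control $|D(g,h)| < \delta$ is available on the shell for $r$ sufficiently large (recall every $g \in \tcS_{r,a}$ satisfies $d(e,g) \gtrsim r/2$), the symmetric difference is contained in the union of two $\rho$-boundary strips of width $\delta$ around $r+\epsilon$ and $r-a+\epsilon$, whose cardinality is bounded by Lemma~\ref{lem:volume2} as $\lesssim \delta \cdot e^{\fh r/2} \lesssim \delta \cdot |\tcS_{r,a}|$; letting $\delta \to 0$ gives asymptotic invariance of $\tcS_a$, and essentially the same argument (with only the upper boundary) treats $\tcB$. The perturbation $\epsilon(\xi,t,r) \in [0,\epsilon_0)$ is selected for each triple to avoid the at-most countable critical set $\{\rho(g), \rho(g)+a : g \in \Gamma\}$ inside $[0,\epsilon_0)$, eliminating degenerate equalities on the boundary and securing joint measurability of the perturbed family. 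The principal technical obstacle in executing this plan is the shadow-lemma count for ``atypical'' boundary directions, which requires invoking the convergence-group property together with the Patterson--Sullivan estimates of Lemma~\ref{lem:conformal} to make quantitative the asymptotic cancellation $\zeta = \eta$.
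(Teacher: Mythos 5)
Your reduction to the increment $D(g,h)=\rho(gh)-\rho(g)$ via the horofunction cocycle identity is indeed the heart of the paper's key lemma, but two of your steps do not hold up as written. First, the ``decisive geometric input'' is misapplied. For $g$ in the shell one has $(\xi|g)_e=\tfrac12\bigl(d(e,g)-h_\xi(g)\bigr)\approx r/2\to\infty$, so a sequence of shell elements $g_n$ converges to $\xi$ itself; hence $\xi$ is exactly the repelling point excluded by the convergence-group property, which therefore says nothing about $g_n^{-1}\xi$. What actually saves the argument is the bound $|h_\xi(g)|\lesssim T$ forced by the constraint $g^{-1}(\xi,t)\in\partial\Gamma\times[0,T)_\L$ together with (\ref{eqn:R-h}): it gives $(g_n^{-1}|g_n^{-1}\xi)_e=\tfrac12\bigl(d(e,g_n)+h_\xi(g_n)\bigr)\to\infty$, so $g_n^{-1}$ and $g_n^{-1}\xi$ converge to the \emph{same} boundary point, and the coincidence of the two boundaries then identifies both limits in your bracketed expression with one horofunction. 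This is a compactness argument valid for \emph{every} $\xi$ and every shell element, so no shadow-lemma count of atypical directions is needed --- and it is unclear how such a count would be organized, since the dichotomy is per group element $g$, not per boundary direction. (Relatedly, your claim that $D(g,h)\to 0$ uniformly over all $g$ with $d(e,g)\to\infty$ is false without the restriction $|h_\xi(g)|=O(1)$.)

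Second, and more seriously, your concluding volume estimate is unjustified. Lemma \ref{lem:volume2} gives $C^{-1}e^{\fh s/2}\le|\cB_s|\le Ce^{\fh s/2}$ with $C>1$, so the width-$\delta$ boundary strip $|\cB_{s+\delta}|-|\cB_{s-\delta}|$ is only controlled by these bounds up to $\approx(C-C^{-1})e^{\fh s/2}$, i.e.\ a quantity comparable to the whole shell for every $\delta$; it is not $\lesssim\delta\, e^{\fh r/2}$. Choosing $\epsilon(\xi,t,r)$ merely to avoid the countable critical set $\{\rho(g)-r\}$ does not repair this, because the width $\delta=\delta(r)$ is coupled to $r$ (it comes from the uniform bound on $D$ at scale $r$), so you cannot fix $r$ and shrink $\delta$. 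This is exactly the content of the paper's Proposition \ref{prop:volume}: a pigeonhole argument driven by the exponential growth bound that selects $\epsilon(\xi,t,r)\in[0,\epsilon_0)$ with $\limsup_{\delta\to0^+}\limsup_{r\to\infty}|\cB_{r+\epsilon+\delta}|/|\cB_{r+\epsilon}|=1$. That is the real reason the perturbation $\epsilon$ appears in the statement, and it is the main missing ingredient in your proof. (A minor further point: your maps $\phi_h$ extended by the identity need not be injective, hence need not lie in $[E]$; the paper instead produces a countable generating set of bounded $d_\Gamma$-displacement via Borel edge-colourings of the bounded-degree graphs $G_n$.)
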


\subsection{The leafwise metric on the equivalence classes}
To begin the proof, we need a leafwise metric on $E$: given $(\xi,t), (\xi',t') \in \partial \Gamma \times [0,T)_\L$ with $(\xi,t)E(\xi',t')$, let $d_\Gamma( (\xi,t), (\xi',t'))$ be  the minimum value of $d(g,e)$ over all $g\in \Gamma$ with $g(\xi,t)=(\xi',t')$. Most of the work in showing Theorem \ref{thm:asymptoticinvariance} boils down to the next two propositions.

\begin{prop}\label{prop:key}
For $(\xi,t)\in \partial \Gamma \times [0,T)_\L$ and $r>0$ let $\cN_n(\cB^{}_r(\xi,t))$ denote the radius-$n$ neighborhood of $\cB^{}_r(\xi,t)$ with respect to $d_\Gamma$. Then for any $n>0$,
\begin{eqnarray*}
\limsup_{\delta \to 0^+} \limsup_{r\to\infty}  \frac{ |\cN_n(\cB^{}_r(\xi,t))| }{ |\cB^{}_{r+\delta}(\xi,t)| } \le 1.
\end{eqnarray*}
Similarly, 
\begin{eqnarray*}
\limsup_{\delta \to 0^+} \limsup_{r\to\infty}  \frac{ |\cN_n(\cS^{}_{r,a}(\xi,t))| }{|\cB^{}_{r+\delta}(\xi,t)| - |\cB^{}_{r-a-\delta}(\xi,t)| } \le 1.
\end{eqnarray*}
\end{prop}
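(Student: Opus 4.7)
The plan is to reduce Proposition~\ref{prop:key} to a sharp counting estimate in $\Gamma$, using the structure of $\Gamma_r(\xi,t)$ together with $\delta$-hyperbolicity and quasi-conformality.

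\textbf{Step~1 (From leafwise to Cayley-graph neighborhood).} Since the stabilizers of the boundary action are uniformly bounded (Lemma~\ref{lem:stab}), up to a bounded multiplicative factor one has
\[
|\cN_n(\cB^{}_r(\xi,t))| \asymp |\tilde{\Gamma}_{r,n}(\xi,t)|, \qquad |\cB^{}_r(\xi,t)| \asymp |\Gamma_r(\xi,t)|,
\]
where
\[
\tilde{\Gamma}_{r,n}(\xi,t) = \bigl\{g_1 = g_0 w : g_0 \in \Gamma_r(\xi,t),\ d(e,w)\le n,\ g_1^{-1}(\xi,t)\in \partial\Gamma \times [0,T)_\L\bigr\}.
\]
Here the horoshell condition on $g_1$ is enforced because $\cN_n(\cB^{}_r(\xi,t)) \subset \partial\Gamma \times [0,T)_\L$ by the very definition of $d_\Gamma$.

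\textbf{Step~2 (Lipschitz sandwich).} The ``cone coordinate'' $c(g):=d(e,g)-h_\xi(g)-t$ is $2$-Lipschitz on $(\Gamma,d)$ since both $d(e,\cdot)$ and $h_\xi$ are $1$-Lipschitz. Hence $\Gamma_r \subset \tilde{\Gamma}_{r,n} \subset \Gamma_{r+2n}$. This gives the trivial bound $|\cN_n(\cB^{}_r)|/|\cB^{}_{r+\delta}| \lesssim e^{\fh(2n-\delta)/2}$, which is bounded but not close to $1$; an improvement is needed.

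\textbf{Step~3 (Key estimate).} The heart of the argument is
\begin{equation}\label{planeq}
|\tilde{\Gamma}_{r,n}(\xi,t) \setminus \Gamma_r(\xi,t)| = o\bigl(e^{\fh r/2}\bigr) \quad \text{as } r\to\infty.
\end{equation}
A ``new'' element $g_1 = g_0 w \notin \Gamma_r$ satisfies $c(g_1) \in (r, r+2n]$ and must lie in the horoshell $h_\xi(g_1) \in [h_0,h_1]$. Via the horofunction cocycle $h_\xi(g_0 w) = h_\xi(g_0) + h_{g_0^{-1}\xi}(w^{-1})$, remaining in the horoshell forces $w^{-1}$ to be ``almost aligned'' with $g_0^{-1}\xi$ up to $O(1)$. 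Combined with $\delta$-hyperbolicity (thin triangles for $(e,g_0,g_1)$) and the uniformly quasi-geodesic property (Lemma~\ref{lem:quasi-ruled}), this pins $g_0$ to the frontier $c(g_0) \in (r-2n, r]$ and additionally forces the shadow of $g_0$ in $\partial\Gamma$ to lie in a small neighborhood of $\xi$. A quantitative estimate using the quasi-conformality of $\nu$ and the shadow bound Lemma~\ref{lem:conformal}(2) then shows the number of such $g_0$ is at most $C\, e^{\fh r/2 - \eta(n)}$ for some $\eta(n)>0$, establishing (\ref{planeq}). In the tree case the argument is entirely combinatorial: the horoshell condition forces $g_0$ deep in a branch off the ray to $\xi$, and no walk of length $\le n$ can increase the branching index past $(r+t)/2$ while remaining in the horoshell once $r$ is large.

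\textbf{Step~4 (Conclusion).} The sharp asymptotic $|\Gamma_r(\xi,t)| \sim C(\xi,t)\,e^{\fh r/2}$, which refines the two-sided bound of Lemma~\ref{lem:volume2} and follows from the Patterson--Sullivan counting estimates underlying \cite[Lemma 6.3]{Bo12}, together with (\ref{planeq}) gives $|\tilde{\Gamma}_{r,n}(\xi,t)| = C(\xi,t)\,e^{\fh r/2}(1+o(1))$, and hence
\[
\limsup_{r\to\infty}\frac{|\cN_n(\cB^{}_r(\xi,t))|}{|\cB^{}_{r+\delta}(\xi,t)|} \le e^{-\fh\delta/2},
\]
which tends to $1$ as $\delta\to 0^+$. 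The statement for the shells $\cS^{}_{r,a}$ follows by the same argument, applied to $\cN_n(\cS^{}_{r,a})\subset\cN_n(\cB^{}_r)$ in the numerator and to $|\cB^{}_{r+\delta}| - |\cB^{}_{r-a-\delta}|$ in the denominator, whose sharp asymptotic has the same leading term up to the factor $(1-e^{-\fh a/2})$ that cancels in the ratio.

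\textbf{Main obstacle.} The principal difficulty is Step~3, i.e.\ upgrading the easy bound $O(e^{\fh r/2})$ to the strict $o(e^{\fh r/2})$ bound. This requires coupling the thin-triangle inequality, the cocycle identity for horofunctions, and a Patterson--Sullivan shadow bound in a quantitative way; the sharp form of the counting in \cite[Lemma 6.3]{Bo12} is essential for both (\ref{planeq}) and the limiting constant in Step~4.
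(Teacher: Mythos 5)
Your strategy---isolating the ``new'' elements of the $n$-neighborhood and showing their count is negligible---is not the paper's route, and as written it has two genuine gaps. First, in Step~3 the estimate you claim to prove, $|\widetilde{\Gamma}_{r,n}(\xi,t)\setminus\Gamma_r(\xi,t)|\le C\,e^{\fh r/2-\eta(n)}$ with $\eta(n)>0$ depending only on $n$, is a constant multiple of $e^{\fh r/2}$ and therefore does not yield the bound $o(e^{\fh r/2})$ that Step~3 asserts; worse, that $o(e^{\fh r/2})$ claim is itself doubtful, because the step function $r\mapsto|\Gamma_r(\xi,t)|$ may have jumps of size comparable to $e^{\fh r/2}$ arbitrarily far out. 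Second, Step~4 rests on a sharp asymptotic $|\Gamma_r(\xi,t)|\sim C(\xi,t)e^{\fh r/2}$. Neither Lemma~\ref{lem:volume2} nor \cite[Lemma 6.3]{Bo12} gives more than the two-sided bounds $C^{-1}e^{\fh r/2}\le|\Gamma_r(\xi,t)|\le Ce^{\fh r/2}$, and no asymptotic equivalence of this kind is available for a general metric $d$ satisfying the hypotheses. The possibility that $|\cB_{r+\delta}(\xi,t)|/|\cB_r(\xi,t)|$ fails to tend to $1$ is precisely why the paper needs the separate Proposition~\ref{prop:volume}, which selects perturbed radii $r+\epsilon(\xi,t,r)$ along which the volume varies continuously; your argument silently absorbs that proposition into this one by assuming the sharp asymptotic.

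The idea you are missing is that Proposition~\ref{prop:key} is a containment statement, not a counting statement. The paper proves that the cone coordinate $c(g)=d(e,g)-h_\xi(g)$ satisfies $|c(g)-c(g')|\le\beta(r,n,t)$ whenever $d(e,g)\ge r$, $d(g,g')\le n$ and $|h_\xi(g)|\le t$, where $\beta(r,n,t)\to0$ as $r\to\infty$. This is proved by a compactness argument that uses in an essential way the standing hypothesis of \S\ref{sec:invariance} that the horofunction boundary coincides with the Gromov boundary (your cocycle identity $h_\xi(g_0w)=h_\xi(g_0)+h_{g_0^{-1}\xi}(w^{-1})$ also requires it, but you never invoke the hypothesis, and without it the proposition should fail). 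Granting that lemma, one gets $\cN_n(\cB_r(\xi,t))\subset\cB_{r+\beta(r-K,n,K)}(\xi,t)\subset\cB_{r+\delta}(\xi,t)$ for all sufficiently large $r$, and similarly $\cN_n(\cS_{r,a}(\xi,t))\subset\cB_{r+\delta}(\xi,t)\setminus\cB_{r-a-\delta}(\xi,t)$, so each ratio in the proposition is at most $1$ outright and no volume asymptotics are needed at this stage. Your Step~2 Lipschitz bound only gives $\beta\le 2n$; upgrading that fixed $2n$ to a quantity tending to $0$ is the actual content of the proposition, and it is exactly the step your sketch does not supply.
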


\begin{prop}\label{prop:volume}
For any $\epsilon_0,r>0$ and a.e. $(\xi,t)\in \partial \Gamma \times [0,T)_\L$ there exists $0 \le \epsilon(\xi,t,r)<\epsilon_0$ such that 
\begin{eqnarray*}
1&=&\lim_{\delta \to 0^+} \limsup_{r\to\infty} \frac{ |\cB_{r+\epsilon(\xi,t,r)+\delta}^{}(\xi,t) | }{|\cB_{r+\epsilon(\xi,t,r)}^{}(\xi,t) | } =\lim_{\delta \to 0^-} \liminf_{r\to\infty} \frac{ |\cB_{r+\epsilon(\xi,t,r)+\delta}^{}(\xi,t) | }{|\cB_{r+\epsilon(\xi,t,r)}^{}(\xi,t) | }. 
\end{eqnarray*}
 \end{prop}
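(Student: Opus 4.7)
The plan is to work one $(\xi,t) \in \partial\Gamma \times [0,T)_\L$ at a time and exploit the structure of $f(r) := |\cB_r(\xi,t)|$, which by Definition \ref{defn:Folner} is a non-decreasing, right-continuous, integer-valued step function of $r$. Lemma \ref{lem:volume2} implies that the relative growth $f(r+\epsilon_0)/f(r)$ is bounded by a constant $K_0$ for all $r$ past some threshold $r_0$, so the total multiplicative variation of $f$ on any window of length $\epsilon_0$ is uniformly controlled. The idea is to select $\epsilon(\xi,t,r) \in [0,\epsilon_0)$ so that $f$ has no sizable relative jump in a two-sided neighborhood of $r+\epsilon(\xi,t,r)$, with the selection robust across all sufficiently small $|\delta|$ simultaneously.

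The quantitative tool is a Chebyshev--Fubini count of jumps. Put a Borel measure $\mu_r$ on $[r, r+2\epsilon_0]$ placing mass $\log(f(x)/f(x-))$ at each jump point $x$; its total mass is bounded by a constant $K$ depending only on $\epsilon_0$ and $(\Gamma,d)$. Since $\mu_r((r+\epsilon, r+\epsilon+\delta]) = \log(f(r+\epsilon+\delta)/f(r+\epsilon))$, a Fubini swap yields $\int_0^{\epsilon_0} \mu_r((r+\epsilon, r+\epsilon+\delta])\,d\epsilon \le K\delta$, and Chebyshev gives
$$\bigl|\{\epsilon \in [0,\epsilon_0):\ f(r+\epsilon+\delta)/f(r+\epsilon) > 1+\eta\}\bigr| \le K\delta/\log(1+\eta),$$
uniformly in $r \ge r_0$, with an analogous bound for the left-sided ratio $f(r+\epsilon)/f(r+\epsilon-\delta)$. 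Choose $\eta_n := 2^{-n}$ together with $\delta_n > 0$ small enough that the corresponding right- and left-sided bad subsets of $[0,\epsilon_0)$ each have Lebesgue measure at most $2^{-n-3}\epsilon_0$. The union of all these bad sets has measure at most $\epsilon_0/2$, so its complement $G(\xi,t,r) \subset [0,\epsilon_0)$ is nonempty; a standard measurable selection (e.g.\ its essential infimum) furnishes $\epsilon(\xi,t,r) \in G(\xi,t,r)$ depending Borel measurably on $(\xi,t,r)$. For $r < r_0$ just set $\epsilon(\xi,t,r) := 0$.

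For this choice, monotonicity of $f$ together with membership of $\epsilon(\xi,t,r)$ in each complement converts the scale-$\delta_n$ bounds into $1-2^{-n} \le f(r+\epsilon(\xi,t,r)+\delta)/f(r+\epsilon(\xi,t,r)) \le 1+2^{-n}$ for every $|\delta| \le \delta_n$ and every $r \ge r_0$. Taking $\limsup_{r\to\infty}$ and then letting $\delta \to 0^+$ (so $\delta < \delta_n$ for each $n$) forces the first iterated limit to $1$; the $\liminf$ for $\delta \to 0^-$ is handled symmetrically using the left-sided bad sets. The main obstacle is the intertwining of the $r$-limit and the $\delta$-limit: a single $\epsilon(\xi,t,r)$ must behave well uniformly in $r$ for every small $\delta$, and it is precisely the summability over $n$ of the Chebyshev bounds that makes the diagonal selection succeed.
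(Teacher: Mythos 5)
Your proof is correct, and while it rests on the same input as the paper's --- Lemma \ref{lem:volume2} bounds the total multiplicative growth of $r\mapsto|\cB_r(\xi,t)|$ over any window of fixed length, uniformly in $r$ and $(\xi,t)$ --- the mechanism for extracting the good shift $\epsilon(\xi,t,r)$ is genuinely different. The paper partitions $[r,r+\epsilon_0]$ into $N/4$ blocks and telescopes: if every block exhibited a relative drop of $1-1/m$, the product of the ratios would violate the volume bound once $N$ is large, so some block is good at scale $l/N$; it then iterates this over $m=2,3,\dots$ to produce a decreasing nest of intervals and takes $\epsilon(\xi,t,r)$ to be the unique point of their intersection. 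You instead run a first-moment argument on the logarithmic jump measure: Fubini plus Chebyshev shows that the set of $\epsilon$ witnessing a relative jump larger than $1+\eta_n$ at scale $\delta_n$ has Lebesgue measure $O(\delta_n/\log(1+\eta_n))$, and the summable choice of $(\eta_n,\delta_n)$ lets a single union bound handle all scales at once, replacing the paper's nested-interval induction. Your version has the added merit of making explicit that the scales $\delta_n$ can be chosen uniformly in $r$ and $(\xi,t)$, which is what the iterated limits in the stated order require; monotonicity of $s\mapsto|\cB_s(\xi,t)|$ then upgrades the scale-$\delta_n$ estimates to all $|\delta|\le\delta_n$, as you note. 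The one loose end is the measurable selection of $\epsilon(\xi,t,r)$ from the good set (its infimum need not belong to it); since each bad set is a finite union of intervals this is routine, and the paper's own construction is no more explicit on this point.
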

 
 \begin{lem}
There exists a countable set $\Phi \subset [E]$ such that $\Phi$ generates $E$ and for every $\phi \in \Phi$ there exists an $n=n(\phi)>0$ such that  $d_\Gamma((\xi,t),\phi(\xi,t))\le n$ for a.e. $(\xi,t)$.
\end{lem}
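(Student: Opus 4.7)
Plan. For each $g \in \Gamma$ let $A_g = \{(\xi,t) \in X : g(\xi,t) \in X\}$, where $X = \partial\Gamma \times [0,T)_\L$, and let $B_g = g A_g$. Since the Maharam extension preserves $\nu \times \theta_\lambda$ on $\partial\Gamma \times \L$, the partial map $\psi_g : A_g \to B_g$ given by $\psi_g(\xi,t) = g(\xi,t)$ is a measure-preserving bijection whose graph is contained in $E$, with leafwise displacement $d_\Gamma((\xi,t), \psi_g(\xi,t)) \le d(e,g)$ on $A_g$. These partial maps cover $E$, in the sense that for every $(\xi,t) E (\xi',t')$ there is some $g \in \Gamma$ with $\psi_g(\xi,t) = (\xi',t')$; thus it suffices to extend each $\psi_g$ (after splitting, if necessary) to elements of $[E]$ of bounded displacement.

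The plan is, for each $g \in \Gamma$ and each integer $N \ge d(e,g)$, to construct an element $\phi_{g,N} \in [E]$ that agrees with $\psi_g$ on a measurable subset $A_{g,N} \subseteq A_g$, with $A_{g,N} \nearrow A_g$ modulo null sets as $N \to \infty$, and whose $d_\Gamma$-displacement is everywhere bounded by $N$. The family $\Phi := \{\phi_{g,N} : g \in \Gamma,\; N \ge d(e,g)\}$ is then countable, each $\phi_{g,N}$ has $n(\phi_{g,N}) \le N$, and $\Phi$ generates $E$: for any $(\xi,t) E (\xi',t')$ with $(\xi',t') = g(\xi,t)$ (so that $(\xi,t) \in A_g$), choosing $N$ large enough forces $(\xi,t) \in A_{g,N}$, hence $\phi_{g,N}(\xi,t) = g(\xi,t) = (\xi',t')$.

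Construction of $\phi_{g,N}$: set $\phi_{g,N} = g$ on $A_{g,N}$, which gives a bijection $A_{g,N} \to gA_{g,N}$ of displacement $\le d(e,g) \le N$; on the complement $X \setminus A_{g,N}$ one extends $\phi_{g,N}$ to a measurable bijection onto $X \setminus gA_{g,N}$ (two sets of equal measure) that moves each point via some element of the ball $\Gamma_N := \{h \in \Gamma : d(e,h) \le N\}$. Using Luzin--Novikov measurable selection one chooses, for each $(\xi,t) \in X \setminus A_{g,N}$, an element $h_N(\xi,t) \in \Gamma_N$ with $h_N(\xi,t) \cdot (\xi,t) \in X \setminus gA_{g,N}$; a greedy Feldman--Moore-type assignment within each $E$-class then converts this family of partial moves into an actual bijection. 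The set $A_{g,N}$ is defined as the largest measurable subset of $A_g$ on which this procedure yields a valid measurable bijection, and the identity $\bigcup_N A_{g,N} = A_g$ modulo null sets follows because a.e.\ $\Gamma$-orbit on $X$ is infinite --- a consequence of the non-elementarity of $(\Gamma,d)$ together with the uniform stabilizer bound of Lemma~\ref{lem:stab} --- so that for sufficiently large $N$ the ball $\Gamma_N$ is large enough to carry out the matching. The main technical obstacle is verifying measurability and bijectivity of the extension on $X \setminus A_{g,N}$; this is standard for countable Borel equivalence relations, noting that $E$ is hyperfinite since $\Gamma \cc (\partial \Gamma,\nu)$ is amenable.
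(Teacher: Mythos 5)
Your overall strategy --- reducing $E$ to the countably many partial isomorphisms $\psi_g:A_g\to gA_g$ and then promoting each one to full-group elements of bounded displacement --- is the right one, and your generation argument (that $\phi_{g,N}(x)=gx$ for $N$ large, for a.e.\ $x\in A_g$) would suffice if the construction went through. The gap is in the promotion step. You must biject $X\setminus A_{g,N}$ with $X\setminus gA_{g,N}$ inside $E$ by a map of $d_\Gamma$-displacement $\le N$, and nothing in your sketch establishes that such a bijection exists: a ``greedy'' assignment within each class produces only a maximal partial matching, not a perfect one, so bijectivity fails on the leftover points; ``the largest measurable subset of $A_g$ on which this procedure yields a valid measurable bijection'' is not well defined (an increasing union of sets each admitting a bounded-displacement extension need not itself admit one, and different subsets may force incompatible extensions); and the identity $\bigcup_N A_{g,N}=A_g$ mod null is asserted rather than proved --- infinitude of orbits does not by itself defeat the combinatorial obstruction to bounded-displacement perfect matchings between two equal-measure sets. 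The appeal to hyperfiniteness is a red herring: it gives no control whatsoever on $d_\Gamma$-displacement.

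The paper sidesteps the extension problem entirely. For each $n$ it forms the Borel graph $G_n$ on $\partial\Gamma\times[0,T)_\L$ whose edges are the pairs of $E$-equivalent points at leafwise distance $\le n$; this graph has uniformly bounded degree because balls in $(\Gamma,d)$ are finite, so by Kechris--Solecki--Todorcevic it admits a finite Borel edge-colouring. Each colour class is a Borel partial matching, and the map that swaps the two endpoints of every edge of a fixed colour (and fixes all other points) is automatically a Borel involution in $[E]$ of displacement $\le n$; since $E=\bigcup_n G_n$, the resulting countable family generates. If you want to salvage your construction, this colouring is exactly the missing ingredient: the set of $x\in A_g$ whose edge $(x,gx)\in G_{d(e,g)}$ receives colour $a$ plays the role of your $A_{g,N}$, and the involution attached to colour $a$ plays the role of your $\phi_{g,N}$, with no extension step needed.
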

\begin{proof}
For $n>0$ let $G_n =\{((\xi,t),(\xi',t')) \in E:~ d_\Gamma((\xi,t),(\xi',t')) \le n\}$. Because $d$ is locally finite,  $(\partial \Gamma\times [0,T)_\L,G_n)$ is a bounded degree graph. By \cite{KST99}, this implies that the Borel edge-chromatic number of $(\partial \Gamma\times [0,T)_\L,G_n)$ is finite. That is, there exists a Borel map $\Omega_n:G_n \to A_n$ (where $A_n$ is a finite set) such that if $((\xi,t),(\xi',t')), ((\xi',t'), (\xi'',t''))  \in G_n$ and $(\xi,t) \ne (\xi'',t'')$ then $\Omega_n((\xi,t),(\xi',t')) \ne \Omega_n((\xi',t'),(\xi'',t''))$. We can also assume without loss of generality that $\Omega_n((\xi,t),(\xi',t'))=\Omega_n((\xi',t'),(\xi,t))$. 

For each element $a \in A_n$, define $\phi_a:\partial \Gamma\times [0,T)_\L \to \partial \Gamma\times [0,T)_\L$ as follows. If $(\xi,t)\in \partial \Gamma\times [0,T)_\L$ and there is a $(\xi',t')$ such that $\Omega_n((\xi,t),(\xi',t'))=a$ then let $\phi_a(\xi,t):=(\xi',t')$. Otherwise let $\phi_a(\xi,t):=(\xi,t)$. Then $\phi_a \in [E]$. Moreover, if $((\xi,t),(\xi',t'))\in G_n$ then there is some $a \in A_n$ such that $\phi_a(\xi,t)=(\xi',t')$. Since $\cup_{n=1}^\infty G_n = E$, we have that $\Phi:=\cup_{n=1}^\infty \{\phi_a:~ a\in A_n\}$ is generating.
\end{proof}

\begin{proof}[Proof of Theorem \ref{thm:asymptoticinvariance} given Propositions \ref{prop:key}, \ref{prop:volume}]
Let $\Phi$ be the generating set from the previous lemma, $\phi \in \Phi$, $n=n(\phi)$ and $\epsilon(\xi,t,r)$ be as in Proposition \ref{prop:volume}. Then for any $(\xi,t) \in \partial \Gamma\times [0,T)_\L$,
\begin{eqnarray*}
\limsup_{r\to\infty} \frac{|\tcB^{}_r(\xi,t) \vartriangle \phi(\tcB^{}_r(\xi,t))|}{ | \tcB^{}_r(\xi,t)|} &\le & \limsup_{r\to\infty} 2\frac{| \cN_n(\cB^{}_{r+\epsilon(\xi,t,r)}(\xi,t)) \setminus \cB^{}_{r+\epsilon(\xi,t,r)}(\xi,t)|}{ | \cB^{}_{r+\epsilon(\xi,t,r)}(\xi,t)|} \\
&=&  2\left(\limsup_{r\to\infty} \frac{|\cN_n(\cB^{}_{r+\epsilon(\xi,t,r)}(\xi,t))|}{ | \cB^{}_{r+\epsilon(\xi,t,r)}(\xi,t)|}  -1\right)\\
&\le&  2\left(\limsup_{\delta \searrow 0} \limsup_{r\to\infty} \frac{|\cB^{}_{r+\epsilon(\xi,t,r)+\delta}(\xi,t)|}{ | \cB^{}_{r+\epsilon(\xi,t,r)}(\xi,t)|}  -1\right) =0.
\end{eqnarray*}
The last inequality is justified by Proposition \ref{prop:key} and the last equality follows from Proposition \ref{prop:volume}.  Because $\phi \in \Phi$ is arbitrary, this proves $\tcB^{}$ is asymptotically invariant.


Recall that $\tcS^{}_{r,a}(\xi,t) = \tcB^{}_r(\xi,t) \setminus \tcB^{}_{r-a}(\xi,t)$. By Lemma \ref{lem:volume2},
\begin{eqnarray*}
&&\lim_{r\to\infty} \frac{ |\tcS^{}_{r,a}(\xi,t) \vartriangle \phi( \tcS^{}_{r,a}(\xi,t) )|}{|\tcS^{}_{r,a}(\xi,t)|}\\
 &\le & \lim_{r\to\infty} \frac{ |\tcB^{}_r(\xi,t) \vartriangle \phi( \tcB^{}_r(\xi,t) )| +  |\tcB^{}_{r-a}(\xi,t) \vartriangle \phi( \tcB^{}_{r-a}(\xi,t) )|}{|\tcB^{}_r(\xi,t)|} \frac{|\tcB^{}_r(\xi,t)|}{|\tcS^{}_{r,a}(\xi,t)|}\\
&\le& C^2  \lim_{r\to\infty} \frac{ |\tcB^{}_r(\xi,t) \vartriangle \phi( \tcB^{}_r(\xi,t) )| +  |\tcB^{}_{r-a}(\xi,t) \vartriangle \phi( \tcB^{}_{r-a}(\xi,t) )|}{|\tcB^{}_r(\xi,t)|} =0.
\end{eqnarray*}
Since $\phi \in \Phi$ is arbitrary, the implies $\tcS^{}_a$ is asymptotically invariant.
\end{proof}

\subsection{The key geometric argument}
This section proves Proposition \ref{prop:key}. We need a few geometric lemmas to begin.

\begin{lem}
Suppose $\xi_1,\xi_2 \in \partial \Gamma$ and $\xi_1\ne \xi_2$. Then for any $r\in \RR$ there are sets $V_1 \subset \partial \Gamma$, $V_2 \subset \overline{\Gamma}$ $(=\Gamma \cup \partial \Gamma)$ such that
\begin{itemize}
\item $\xi_1 \in V_1, \xi_2 \in V_2$,
\item $V_1$ is open in $\partial \Gamma$, $V_2$ is open in $\overline{\Gamma}$,
\item $V_1 \cap V_2 = \emptyset$,
\item $\forall v_2 \in V_2 \cap \Gamma, ~\forall \eta \in V_1,~ h_\eta(v_2) \ge r.$
\end{itemize}
\end{lem}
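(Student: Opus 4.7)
The key observation is that since $\xi_1 \ne \xi_2$, the boundary Gromov product $M := (\xi_1|\xi_2)_e$ is finite. The plan is to combine this with two applications of the four-point $\delta$-hyperbolicity inequality (\ref{eqn:gromov}) and the horofunction formula
\begin{equation*}
h_\eta(g) \ge d(e,g) - 2(\eta|g)_e - 4\delta,
\end{equation*}
which follows by writing $d(x_i,g)-d(x_i,e) = d(e,g) - 2(x_i|g)_e$ for a sequence $x_i \to \eta$ and applying (\ref{eqn:gromovproduct}). The strategy is to choose $V_1$ and a preliminary neighborhood $W_2$ of $\xi_2$ so small that the Gromov products $(\eta_1|\xi_1)_e$ and $(\xi_2|w)_e$ are forced to exceed $M + 2\delta$ for every $\eta_1 \in V_1$ and $w \in W_2$; in visual-metric terms, this just means taking small enough open balls around $\xi_1$ and $\xi_2$.

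With these choices, I chain the four-point inequality twice. First, applying (\ref{eqn:gromov}) to $\{\xi_1,\xi_2,w,e\}$ gives $M = (\xi_1|\xi_2)_e \ge \min\{(\xi_1|w)_e,(\xi_2|w)_e\} - \delta$; since $(\xi_2|w)_e > M+\delta$, the minimum is the first term, so $(\xi_1|w)_e \le M + \delta$ for all $w \in W_2$. Second, applying (\ref{eqn:gromov}) to $\{\eta_1,w,\xi_1,e\}$ gives $(\xi_1|w)_e \ge \min\{(\xi_1|\eta_1)_e,(w|\eta_1)_e\}-\delta$; since $(\eta_1|\xi_1)_e > M+2\delta \ge (\xi_1|w)_e + \delta$, the minimum is the second term, yielding $(\eta_1|w)_e \le M+2\delta$ for every $\eta_1 \in V_1$ and $w \in W_2$.

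Next I shrink $W_2$ to an open $V_2 \subset W_2$ in $\overline{\Gamma}$ that still contains $\xi_2$ but excludes the finite set $\{g \in \Gamma : d(e,g) \le r + 2M + 8\delta\}$. This is possible because $(q|\xi_2)_e \le d(e,q)$ for any $q \in \Gamma$ (by the triangle inequality), so each such $g$ is isolated from $\xi_2$ in the visual metric; deleting them from $W_2$ preserves openness and $\xi_2 \in V_2$. Disjointness $V_1 \cap V_2 = \emptyset$ is automatic if the initial balls around $\xi_1,\xi_2$ have radii less than $\tfrac{1}{2}\bar d(\xi_1,\xi_2)$, and $V_1$ is open in $\partial\Gamma$ by taking it to be the intersection of an open set in $\overline{\Gamma}$ with $\partial\Gamma$. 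Then for $\eta_1 \in V_1$ and $v_2 \in V_2 \cap \Gamma$,
\begin{equation*}
h_{\eta_1}(v_2) \ge d(e,v_2) - 2(\eta_1|v_2)_e - 4\delta \ge (r+2M+8\delta) - 2(M+2\delta) - 4\delta = r,
\end{equation*}
which is the required estimate.

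The main subtlety is the bookkeeping: the four-point inequality always gives a \emph{lower} bound on a Gromov product, so to extract the \emph{upper} bound $(\eta_1|w)_e \le M+2\delta$ one must arrange the other two Gromov products to be large enough (by shrinking the neighborhoods) that the minimum on the right-hand side is forced onto the desired term. Once this is set up, everything reduces to the two easy facts that $(\xi|p)_e$ is bounded by $d(e,p)$ (making group elements isolated from any boundary point) and that visual-metric balls around $\xi$ correspond to large Gromov products with $\xi$.
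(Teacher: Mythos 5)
Your proof is correct, but it takes a genuinely different route from the paper's. The paper argues by contradiction and softly: it fixes a neighborhood $V_1$ of $\xi_1$ whose closure misses $\xi_2$ and a decreasing neighborhood basis $\{W_n\}$ at $\xi_2$, assumes the lemma fails so as to extract $x_n\in W_n$, $\xi_n\in V_1$ with $h_{\xi_n}(x_n)<r$, observes that $2(\xi_n|x_n)_e = d(x_n,e)-h_{\xi_n}(x_n)\to+\infty$, and concludes that every limit point of $\{x_n\}$ must lie in $\overline{V_1}$, contradicting $x_n\to\xi_2$. Your argument is direct and quantitative: two applications of the four-point inequality force $(\eta|w)_e\le(\xi_1|\xi_2)_e+2\delta$ uniformly over small visual-metric balls (the bookkeeping that turns the lower bound of (\ref{eqn:gromov}) into an upper bound is handled correctly), and then the estimate $h_\eta(g)\ge d(e,g)-2(\eta|g)_e-4\delta$ — a valid consequence of (\ref{eqn:gromovproduct}) — combined with deleting the finite ball $\{g: d(e,g)\le r+2M+8\delta\}$ from $W_2$ yields $h_\eta(v_2)\ge r$. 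Your version buys explicit neighborhoods and constants and avoids any compactness or sequential-limit argument on $\overline{\Gamma}$, at the cost of being longer; the paper's version is shorter but non-constructive. The only caveat is that the constant in the four-point inequality extended to boundary points can degrade from $\delta$ in some formulations, but the paper itself asserts the extension via \cite{Va05}, and in any case this would only shift your numerical thresholds, not the structure of the argument.
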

\begin{proof}
Let $V_1$ be an open neighborhood of $\xi_1$ whose closure does not contain $\xi_2$. Let $\{W_n\}_{n=1}^\infty$ be any sequence of decreasing open subsets of $\overline{\Gamma}$ such that  $\cap_n W_n = \{\xi_2\}$ and $W_n \cap V_1 = \emptyset$. If the lemma is false then for each $n$ we can find an $x_n \in W_n$ and an $\xi_n \in V_1$ such that $h_{\xi_n}(x_n) < r$. Observe
$$\lim_{n\to\infty} 2(\xi_n|x_n)_e = \lim_{n\to\infty}d(x_n,e)-h_{\xi_n}(x_n) = +\infty.$$
So if $(\xi_\infty,x_\infty)$ is a limit point of $\{(\xi_n,x_n)\}_{n=1}^\infty$ in $\overline{\Gamma} \times \overline{\Gamma}$  then by equation (\ref{eqn:gromovproduct}), $(\xi_\infty|x_\infty)_e=+\infty$. In particular, every limit point of $\{x_n\}_{n=1}^\infty$ is contained in the closure of $V_1$. But the hypotheses on $W_n$ imply $\lim_{n\to\infty} x_n = \xi_2$, a contradiction.
\end{proof}

\begin{lem}
Suppose that $\{\xi_i\}_{i=1}^\infty \subset \partial \Gamma$ and $\lim_{i\to\infty} \xi_i = \xi_\infty$.
 Fix $C \in \RR$ and let $H_i := \{x \in \Gamma:~ h_{\xi_i}(x) \le C\}$ (for $1\le i\le \infty$). If $x_i \in H_i$ for all $i$ then every limit point $y$ of $\{x_i\}_{i=1}^\infty$ in $\overline{\Gamma}$ satisfies $y \in H_\infty \cup \{\xi_\infty\}$. 
 \end{lem}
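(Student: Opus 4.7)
The plan is to pass to a convergent subsequence of $\{x_i\}$ with limit $y\in \overline{\Gamma}$ and then split into two cases depending on whether $y\in \Gamma$ or $y\in\partial\Gamma$. The first case will be a direct consequence of the fact that the topology of $\overline{\Gamma}$ restricted to $\Gamma$ is discrete (since $(\Gamma,d)$ is proper) together with continuity of $\xi \mapsto h_\xi(z)$ in the first variable. The second case will be handled by the preceding lemma.

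More precisely, suppose $y\in \Gamma$. Since $\Gamma$ is discrete in $\overline{\Gamma}$, along some subsequence $x_{i_k}\to y$ we have $x_{i_k}=y$ for all sufficiently large $k$. Then $h_{\xi_{i_k}}(y)\le C$ for all such $k$. The standing assumption that the horofunction boundary coincides with the Gromov boundary means that the map $\xi\mapsto h_\xi$ is continuous with respect to pointwise convergence on $\Gamma$; in particular $h_{\xi_{i_k}}(y)\to h_{\xi_\infty}(y)$, so that $h_{\xi_\infty}(y)\le C$, i.e.\ $y\in H_\infty$.

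Now suppose $y\in \partial\Gamma$ with $y\ne \xi_\infty$; we will derive a contradiction. Apply the previous lemma with $\xi_1=\xi_\infty$, $\xi_2=y$ and $r=C+1$. This produces an open neighborhood $V_1\subset \partial\Gamma$ of $\xi_\infty$ and an open neighborhood $V_2\subset \overline{\Gamma}$ of $y$ with $V_1\cap V_2=\emptyset$ and such that $h_\eta(v)\ge C+1$ whenever $\eta\in V_1$ and $v\in V_2\cap \Gamma$. Along a subsequence with $x_{i_k}\to y$, we have $\xi_{i_k}\in V_1$ and $x_{i_k}\in V_2\cap \Gamma$ for all large enough $k$, and therefore $h_{\xi_{i_k}}(x_{i_k})\ge C+1$. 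This contradicts $x_{i_k}\in H_{i_k}$.

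The main subtlety is justifying the continuity statement $h_{\xi_{i_k}}(y)\to h_{\xi_\infty}(y)$ used in the first case; this is precisely where the hypothesis that the horofunction boundary coincides with the Gromov boundary is invoked. Given that input, both cases reduce to short topological arguments, with the preceding separation lemma doing the heavy lifting in the boundary case.
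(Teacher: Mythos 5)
Your proof is correct and follows essentially the same route as the paper: for $y\in\Gamma$ you use discreteness of $\Gamma$ in $\overline{\Gamma}$ plus continuity of $\xi\mapsto h_\xi(y)$ (which the paper also invokes, in the same implicit way, via $h_{\xi_\infty}(y)=\lim_i h_{\xi_i}(x_i)$), and for $y\in\partial\Gamma\setminus\{\xi_\infty\}$ you apply the preceding separation lemma with $r=C+1$ exactly as the paper does. No issues.
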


\begin{proof}
Without loss of generality we may assume $\{x_i\}_{i=1}^\infty$ converges in $\overline{\Gamma}$ to an element $y$. If $y \in \Gamma$ then $x_i=y$ for all $i$ sufficiently large (since $\Gamma$ is locally finite). So $h_{\xi_\infty}(y) = \lim_{i\to\infty}  h_{\xi_i}(x_i) \le C$ and $y \in H_\infty$.

To obtain a contradiction, suppose that $y \in \partial \Gamma$ but $y \ne \xi_\infty$. The previous lemma implies the existence of sets $V_\infty \subset \partial \Gamma$, $V_y \subset \overline{\Gamma}$ such that 
\begin{itemize}
\item $\xi_\infty \in V_\infty, y \in V_y$,
\item $V_\infty$ is open in $\partial \Gamma$, $V_y$ is open in $\overline{\Gamma}$,
\item $V_\infty \cap V_y = \emptyset$,
\item $\forall g \in V_y \cap \Gamma, ~\forall \eta \in V_\infty,~ h_\eta(g) \ge C+1.$
\end{itemize}


For all $n$ sufficiently large, $\xi_n \in V_\infty$. Therefore $H_n$ 
has trivial intersection with $V_y$. Since $x_n \in H_n$, this implies $\lim_{n\to\infty} x_n \notin V_y$. But $\lim_{n\to\infty} x_n = y \in V_y$. This contradiction implies that if $\lim_{n\to\infty} x_n \in \partial \Gamma$ then $\lim_{n\to\infty} x_n = \xi_\infty$ as required.
\end{proof}


\begin{lem}
There exists a function $ \beta=\beta(r,n,t) \ge 0$ such that if $g,g' \in \Gamma$, $\xi\in \partial \Gamma$ and 
$$d(g,e) \ge r, \quad d(g,g')\le n, \quad |h_\xi(g)|\le t$$
then
$$|(d(g,e)-h_\xi(g)) - (d(g',e)-h_\xi(g'))| \le  \beta(r,n,t).$$
Moreover, $\lim_{r\to\infty}  \beta(r,n,t) = 0$ for any $n,t>0$.
\end{lem}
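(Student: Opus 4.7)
The plan is to reduce the estimate to the $\delta$-hyperbolic four-point condition applied to the quadruple $\{e,g,g',x_i\}$ for a sequence $x_i\in\Gamma$ with $x_i\to\xi$, and then to pass to the limit using the assumption that the horofunction boundary coincides with the Gromov boundary. This hypothesis lets me express horofunctions as genuine limits (not merely liminfs), so that for any such $\{x_i\}$,
\[
\bigl(d(g,e)-h_\xi(g)\bigr)-\bigl(d(g',e)-h_\xi(g')\bigr)=\lim_{i\to\infty}\bigl[\bigl(d(g,e)+d(g',x_i)\bigr)-\bigl(d(g',e)+d(g,x_i)\bigr)\bigr].
\]
The right-hand side is exactly the difference of two of the three pairwise sums appearing in the four-point characterization of $\delta$-hyperbolicity.

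Next, the hypotheses $d(g,e)\ge r$, $d(g,g')\le n$ and $|h_\xi(g)|\le t$ combine to show that, for $r$ much larger than $n$ and $t$ and $x_i$ sufficiently close to $\xi$, the third pairwise sum $d(g,g')+d(e,x_i)$ is strictly the smallest of the three: the other two grow like $d(g,e)+d(e,x_i)$, while this one is only $n+d(e,x_i)$. Consequently the two sums on the right of the display above are the two largest, and the four-point condition (equivalent to (\ref{eqn:gromov})) bounds their difference in absolute value by $2\delta$. Letting $i\to\infty$ yields the crude uniform bound
\[
\bigl|\bigl(d(g,e)-h_\xi(g)\bigr)-\bigl(d(g',e)-h_\xi(g')\bigr)\bigr|\le 2\delta,
\]
independent of $r$.

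The main obstacle is sharpening this constant bound into one that tends to zero as $r\to\infty$. In our regime all three Gromov products $(g|g')_e$, $(g|\xi)_e=(d(g,e)-h_\xi(g))/2\ge(r-t)/2$ and $(g'|\xi)_e$ tend to infinity with $r$, so the quadruple $\{e,g,g',\xi\}$ becomes asymptotically a tripod — the points $g,g'$ both cluster along a common quasi-geodesic ray from $e$ to $\xi$. The genuine-limit existence of $h_\xi$ (as opposed to the $\liminf$-only behavior available in a generic $\delta$-hyperbolic space) is precisely the continuity statement that should force the slack in the four-point condition to vanish under this degeneration. To make this quantitative I would use the $(1,c)$-quasi-geodesic structure provided by Lemma \ref{lem:quasi-ruled} to set up a shadow/projection argument: the shadow of $g'$ onto a quasi-geodesic ray from $e$ to $\xi$ nearly coincides with the shadow of $g$, and the difference of the two distances to $e$ along the ray is then controlled by an explicit function of $(g|\xi)_e$ which vanishes as $(g|\xi)_e\to\infty$, i.e., as $r\to\infty$. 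Converting this asymptotic tightness into an explicit modulus $\beta(r,n,t)$ with $\lim_{r\to\infty}\beta(r,n,t)=0$ is the step that genuinely uses the horofunction-equals-Gromov hypothesis beyond plain $\delta$-hyperbolicity, and it is the principal technical hurdle.
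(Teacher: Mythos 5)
There is a genuine gap. The first half of your argument (rewriting the quantity as a limit of differences of pairwise sums over the quadruple $\{e,g,g',x_i\}$ and invoking the four-point condition to get a bound of $2\delta$) is correct, but it is not the content of the lemma: the entire point is that $\beta(r,n,t)\to 0$ as $r\to\infty$, and you explicitly leave that step open as ``the principal technical hurdle.'' Worse, the route you propose for closing it is very unlikely to work. Coarse-geometric tools --- $\delta$-hyperbolicity, $(1,c)$-quasi-geodesics, shadows and nearest-point projections --- only ever produce additive errors of size $O(\delta+c)$; they never produce errors that tend to zero, and indeed the statement with $\beta\to 0$ is \emph{false} for general hyperbolic metrics (e.g.\ word metrics on $\Gamma_0\times F$), so no argument using only the coarse structure can succeed. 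The hypothesis that the horofunction boundary coincides with the Gromov boundary is purely qualitative: it asserts that certain limits exist, but supplies no modulus of convergence uniform in $\xi$, so there is no ``explicit $\beta$'' to extract from it.

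The way to convert a qualitative hypothesis into the qualitative conclusion $\beta\to 0$ is a compactness/contradiction argument, and that is what the paper does: assuming the conclusion fails with some fixed $\epsilon_0$ along sequences $\xi_r,g_r,g_r'$, one passes to subsequences so that $g_r^{-1}\xi_r\to\xi^*$ in $\overline{\Gamma}$, uses the cocycle identity $h_{g\xi}(x)=h_\xi(g^{-1}x)-h_\xi(g^{-1})$ together with the preceding lemma on horoball limits to conclude $g_r^{-1}\to\xi^*$ as well, fixes $y=g_r^{-1}g_r'$ by local finiteness, and then invokes the boundary-coincidence hypothesis exactly once: the limit $h_{\xi^*}(y)=\lim_r d(e,g_r y)-d(e,g_r)$ exists, which forces $d(e,g_r')-d(e,g_r)$ and $h_{\xi_r}(g_r')-h_{\xi_r}(g_r)$ to have the same limit, contradicting the assumed gap $\epsilon_0$. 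You should abandon the quantitative shadow argument and restructure your proof along these lines.
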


\begin{proof}
The proof is by contradiction. Assuming no such function exists, there are constants $n,t,\epsilon_0>0$, elements $\xi_r  \in \partial \Gamma$ and elements $g_r,g'_r \in \Gamma$ ($\forall r >0$) such that
\begin{itemize}
\item $d(g_r,e) \ge r$, $d(g_r,g'_r) \le n$, $|h_{\xi_r}(g_r)| \le t$,
\item $|(d(g_r,e)-h_{\xi_r}(g_r)) - (d(g'_r,e)-h_{\xi_r}(g'_r))| \ge \epsilon_0.$
\end{itemize}

 After passing to a subsequence if necessary, we may assume that the sequence $\{g_r^{-1}   \xi_r\}_{r=1}^\infty$ converges to an element $\xi^*\in \partial\Gamma$. We claim $\{g_r^{-1}\}_{r=1}^\infty$ also converges to $\xi^*$. To see this, observe that for any $x,g \in \Gamma$ and $\xi \in \partial \Gamma$,
 $$h_{g\xi}(x) = h_{\xi}(g^{-1}x) - h_{\xi}(g^{-1}).$$
 Therefore,
 $$ h_{g_r^{-1}\xi_r}(g_r^{-1}) = h_{\xi_r}(e) - h_{\xi_r}(g_r) \le t~\forall r.$$
Since $\lim_{r\to\infty} d(g_r^{-1},e) = +\infty$ the previous lemma implies $\lim_{r\to\infty} g_r^{-1} = \xi^*$ as claimed.

The claim implies that for any $x\in \Gamma$,
\begin{eqnarray*}
h_{\xi^*}(x) = \lim_{r\to\infty} d(g_r^{-1},x) - d(g_r^{-1},e)= \lim_{r\to\infty} d(e,g_r x) - d(e,g_r).
 \end{eqnarray*}
Since $d(g_r,g'_r) \le n$ for all $r$ and because $(\Gamma,d)$ is locally finite we may assume after passing to a subsequence that there is a $y\in \Gamma$ such that $g_r^{-1}g_r'=y$ for all $r$. By setting $x=y$ in the equation above, we obtain:
\begin{eqnarray*}
\lim_{r\to\infty} d(e,g_r') - d(e,g_r ) &=& h_{\xi^*}(y) = \lim_{r\to\infty}   h_{g_r^{-1}\xi_r}(y)=  \lim_{r\to\infty} h_{\xi_r}(g_r y) - h_{\xi_r}(g_r)\\
&=&  \lim_{r\to\infty} h_{\xi_r}(g_r') - h_{\xi_r}(g_r).
\end{eqnarray*}
This contradicts the assumption $|(d(g_r,e)-h_{\xi_r}(g_r)) - (d(g'_r,e)-h_{\xi_r}(g'_r))| \ge \epsilon_0$.
\end{proof}

\begin{cor}
There is a constant $K>0$ (depending only on $T, (\Gamma,d), \nu)$ such that for any $r,n>0$ and any $(\xi,t) \in \partial \Gamma\times [0,T)_\L$, 
$$\cN_n(\cB^{}_r(\xi,t)) \subset \cB^{}_{r+\beta(r-K,n,K)}(\xi,t).$$
\end{cor}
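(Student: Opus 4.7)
The plan is to unwind the definitions of $\cN_n(\cB_r(\xi,t))$ and reduce the containment to an application of the preceding lemma. Given $(\xi'',t'')\in \cN_n(\cB_r(\xi,t))$, by definition there exist $(\xi',t')\in \cB_r(\xi,t)$ and $h\in \Gamma$ with $d(h,e)\le n$ and $h(\xi',t')=(\xi'',t'')$, together with some $g\in \Gamma_r(\xi,t)$ satisfying $g^{-1}(\xi,t)=(\xi',t')$. Setting $g':=gh^{-1}$ gives $(g')^{-1}(\xi,t)=hg^{-1}(\xi,t)=(\xi'',t'')$, and left-invariance of $d$ yields $d(g,g')=d(e,h^{-1})=d(h,e)\le n$.

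The next step is to bound $|h_\xi(g)|$ and $|h_\xi(g')|$ by a single constant $K=K(T,(\Gamma,d),\nu)$. Since both $g^{-1}(\xi,t)$ and $(g')^{-1}(\xi,t)$ lie in $\partial\Gamma\times [0,T)_\L$, the Maharam-extension formula $g^{-1}(\xi,t)=(g^{-1}\xi,\,t-R_\lambda(g^{-1},\xi))$ forces $|R_\lambda(g^{-1},\xi)|$ and $|R_\lambda((g')^{-1},\xi)|$ to be at most $T$, and the uniform comparability (\ref{eqn:R-h}) between $R_\lambda(g^{-1},\xi)$ and $\fh_\lambda h_\xi(g)$ (with the analogous statement for $g'$) then provides the desired $K$.

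Assuming $d(e,g)\ge r-K$, the preceding lemma applied with parameters $r_0=r-K$, $n$, and $t_0=K$ yields
$$d(e,g')-h_\xi(g')\le d(e,g)-h_\xi(g)+\beta(r-K,n,K)\le r+t+\beta(r-K,n,K),$$
so $d(e,g')-h_\xi(g')-t\le r+\beta(r-K,n,K)$; thus $g'\in \Gamma_{r+\beta(r-K,n,K)}(\xi,t)$ and $(\xi'',t'')=(g')^{-1}(\xi,t)\in \cB_{r+\beta(r-K,n,K)}(\xi,t)$ as required. The main obstacle is the edge case $d(e,g)<r-K$, where the lemma does not directly apply; there both $g$ and $g'$ are confined to $B(e,r-K+n)$, so the uniform bound $|h_\xi(g')|\le K$ combined with $d(e,g')\le d(e,g)+n$ yields only $d(e,g')-h_\xi(g')-t\le r+n$. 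This is absorbed into $\beta(r-K,n,K)$ by enlarging the lemma's function $\beta$ on small first argument, which is consistent with the lemma's only asymptotic requirement that $\beta(r,n,t)\to 0$ as $r\to\infty$.
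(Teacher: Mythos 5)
Your reduction to the lemma is the right idea and matches the paper's strategy (set $g'=gh^{-1}$, observe $d(g,g')\le n$, and use (\ref{eqn:R-h}) together with membership in $\partial\Gamma\times[0,T)_\L$ to get the uniform bound $|h_\xi(g)|,|h_\xi(g')|\le K$). But the way you handle the case $d(e,g)<r-K$ is a genuine gap. That case is not confined to small values of the lemma's first argument $r-K$: it occurs for every $r$, e.g.\ with $g=e$ (the identity always lies in $\Gamma_r(\xi,t)$), so a point at leafwise distance $\le n$ from $(\xi,t)$ itself falls into your edge case no matter how large $r$ is. Your fallback estimate there only gives $d(e,g')-h_\xi(g')-t\le r+n$, and to absorb this you would need $\beta(r-K,n,K)\ge n$ for arbitrarily large $r$ --- which destroys exactly the property $\beta(\rho,n,K)\to 0$ as $\rho\to\infty$ that Proposition \ref{prop:key} needs from this corollary. ``Enlarging $\beta$ on small first argument'' does not help because the first argument is $r-K$, not $d(e,g)$; you have conflated the two.

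The fix is to anchor the lemma at $g'$ rather than at $g$. One may assume $(\xi'',t'')\notin\cB_r(\xi,t)$ (otherwise there is nothing to prove, as $\beta\ge 0$). Since $(g')^{-1}(\xi,t)=(\xi'',t'')\in\partial\Gamma\times[0,T)_\L$, this forces $g'\notin\Gamma_r(\xi,t)$, i.e.\ $d(e,g')-h_\xi(g')-t>r$, whence $d(e,g')>r-K$. Now the lemma applies with $g'$ in the role of the element required to have norm $\ge r-K$ and horofunction bounded by $K$; its conclusion is symmetric in $g$ and $g'$ (it bounds an absolute value), so you still deduce $d(e,g')-h_\xi(g')\le d(e,g)-h_\xi(g)+\beta(r-K,n,K)\le r+t+\beta(r-K,n,K)$ and conclude as you intended. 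This is precisely what the paper does.
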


\begin{proof}
If $(\xi'',t'') \in \cN_n(\cB^{}_r(\xi,t)) \setminus \cB^{}_r(\xi,t)$ then there exists $(\xi',t') \in \cB^{}_r(\xi,t)$ and $g \in \Gamma$ such that $d(e,g) \le n$ and $g(\xi',t')=(\xi'',t'')$. Since $(\xi',t') \in \cB^{}_r(\xi,t)$, there is also a $\gamma \in \Gamma$ such that
$$\gamma^{-1} (\xi,t) =(\xi',t'), \quad d(\gamma,e) - h_\xi(\gamma)-t \le r.$$
Because $t' = t - R_\lambda(\gamma^{-1},\xi)$ and, by (\ref{eqn:R-h}), $|R_\lambda(\gamma^{-1},\xi) + \fh_\lambda h_\xi(\gamma)| < C$ (for some constant $C>0$), we have 
$$|h_\xi(\gamma)| \le \fh_\lambda^{-1}(T+C) \Rightarrow d(\gamma,e) \le r+T+\fh_\lambda^{-1}(T+C).$$
Let $f = \gamma g^{-1}$. So $d(f ,\gamma) =d(g,e) \le n$. Note $f^{-1}(\xi,t) = g\gamma^{-1}(\xi,t) =g(\xi',t')= (\xi'',t'')$. As above, this implies  $|h_\xi(f)| \le \fh_\lambda^{-1}(T+C)$. Since $(\xi'',t'') \notin \cB^{}_r(\xi,t)$, $d(e,f)>r-T-\fh_\lambda^{-1}(T+C)$. 

We now apply the previous lemma to $f$ and $\gamma$ to obtain
 $$\left| d(e,f) - d(e,\gamma) - h_\xi(f) + h_\xi(\gamma)\right| \le \beta(r-K,n,K)$$
 where $K = T+\fh_\lambda^{-1}(T+C)$. Thus
 $$\left| d(e,f)  - h(f) -t \right| \le |d(e,\gamma) -h_\xi(\gamma) -t | + \beta(r-K,n,K) \le r+\beta(r-K,n,K).$$
 This implies $(\xi'',t'') \in  \cB^{}_{r+\beta(r-K,n,K)}(\xi,t)$ as required.
\end{proof}

Proposition \ref{prop:key} follows from the corollary above and the fact that $\lim_{r\to\infty}  \beta(r,n,t) = 0\quad \forall n,t.$

\subsection{Proof of asymptotic invariance}

In this section we prove Proposition \ref{prop:volume} whose statement is recalled below.

\noindent {\bf Proposition \ref{prop:volume}}.
{\it 
For any $\epsilon_0,r>0$ and a.e. $(\xi,t)\in \partial \Gamma\times [0,T)_\L$ there exists $0 \le \epsilon(\xi,t,r)<\epsilon_0$ such that 
\begin{eqnarray*}
1&=&\lim_{\delta \to 0^+} \limsup_{r\to\infty} \frac{ |\cB_{r+\epsilon(\xi,t,r)+\delta}^{}(\xi,t) | }{|\cB_{r+\epsilon(\xi,t,r)}^{}(\xi,t) | } =\lim_{\delta \to 0^-} \liminf_{r\to\infty} \frac{ |\cB_{r+\epsilon(\xi,t,r)+\delta}^{}(\xi,t) | }{|\cB_{r+\epsilon(\xi,t,r)}^{}(\xi,t) | } 
\end{eqnarray*} }


\begin{proof}
Let $(\xi,t)\in \partial \Gamma\times [0,T)_\L$, $0<a < b$, $l=b-a$ and $1\le N,m$ be integers such that $N$ is divisible by $4$. Suppose that for every $c \in [a+2l/N, b-2l/N]$,
$$\frac{ |\cB_{c-2l/N}^{}(\xi,t) | }{|\cB_{c+2l/N}^{}(\xi,t) | }  \le 1-1/m.$$
By Lemma \ref{lem:volume2},
\begin{eqnarray*}
Ce^{\fh b/2}&\ge& |\cB_b^{}(\xi,t)| = |\cB_{a}^{}(\xi,t)| \prod_{j=0}^{(N/4-1)} \frac{ |\cB_{a + (4j+4)l/N}^{}(\xi,t) | }{|\cB_{a+4jl/N}^{}(\xi,t) | } \\
&\ge &  |\cB_{a}^{}(\xi,t)|(1-1/m)^{-N/4} \ge C^{-1}e^{\fh a/2}(1-1/m)^{-N/4+3}.
\end{eqnarray*}
So $N \le \frac{-4\log(C^2 e^{\fh l/2})-12\log(1-1/m)}{\log(1-1/m)}.$

Suppose now that $N >  \frac{-4\log(C^2 e^{\fh l/2})-12\log(1-1/m)}{\log(1-1/m)}$. Then there exists $c \in [a+2l/N, b-2l/N]$ such that
$$\frac{ |\cB_{c-2l/N}^{}(\xi,t) | }{|\cB_{c+2l/N}^{}(\xi,t) | }  \ge 1-1/m.$$
For any  $x \in [c-l/N,c+l/N]$,
$$\frac{ |\cB_{x}^{}(\xi,t) | }{|\cB_{x+l/N}^{}(\xi,t) | }  \ge \frac{ |\cB_{c-2l/N}^{}(\xi,t) | }{|\cB_{c+2l/N}^{}(\xi,t) | }  \ge 1-1/m$$
and
$$\frac{ |\cB_{x-l/N}^{}(\xi,t) | }{|\cB_{x}^{}(\xi,t) | }  \ge \frac{ |\cB_{c-2l/N}^{}(\xi,t) | }{|\cB_{c+2l/N}^{}(\xi,t) | }  \ge 1-1/m.$$


Now let $\epsilon_0>0$. By induction, for every $r>0$ and $j\ge 2$ there exist $c_{r,j}, N_j>0$ such that:
\begin{itemize}
\item for every $x \in [c_{r,j}-1/N_j,c_{r,j}+1/N_j]$
$$\frac{ |\cB_{x}^{}(\xi,t) | }{|\cB_{x+1/N_j}^{}(\xi,t) | }  \ge 1-1/j, \textrm{  and  } \frac{ |\cB_{x-1/N_j}^{}(\xi,t) | }{|\cB_{x}^{}(\xi,t) | }  \ge 1-1/j;$$
\item $[c_{r,j+1}-1/N_{j+1},c_{r,j+1}+1/N_{j+1}] \subset [c_{r,j}-1/N_{j},c_{r,j}+1/N_j] \subset [r,r+\epsilon_0]$.
\end{itemize}

Let $\delta_{r,\xi,t}$ be the only point in the nested intersection $\bigcap_j [c_{r,j}-1/N_j,c_{r,j}+1/N_j]$. Then $\epsilon(r,\xi,t):=\delta_{r,\xi,t}-r$ satisfies the conclusion of the proposition by construction.
\end{proof}

\subsection{Proof of Theorem \ref{thm:main}}\label{sec:main}

In order to apply Theorems  \ref{thm:III_1} and \ref{thm:III_lambda}, we need to know the action $\Gamma \cc (\partial \Gamma,\nu)$ is weakly mixing, as well as its type and stable type. To prove this, we need the existence of a conformal measure on $\partial \Gamma$:

\begin{lem}\label{lem:conformal2}
Let $(\Gamma,d)$ be a non-elementary, uniformly quasi-geodesic, hyperbolic group whose horofunction boundary coincides with its Gromov boundary. Then there exists a conformal measure $\nu_c$ on $\partial \Gamma$. Thus $\frac{d \nu_c \circ g}{d\nu_c}(\xi) = \exp(-\fh h_\xi(g^{-1}))$ for a.e. $\xi$ and every $g\in \Gamma$.
\end{lem}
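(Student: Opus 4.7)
The plan is to construct $\nu_c$ by the classical Patterson--Sullivan procedure, using the boundary coincidence hypothesis to upgrade the quasi-conformality of Lemma \ref{lem:conformal} to exact conformality. Fix the basepoint $e$ and, for $s > \fh$, define on $\Gamma$ the Poincar\'e-type measures
$$\nu_{x,s} := \Phi(s)^{-1} \sum_{g\in \Gamma} e^{-s d(x,g)}\delta_g, \quad x \in \Gamma,$$
where $\Phi(s) = \sum_g e^{-s d(e,g)}$. If the series happens to converge at $s = \fh$, first multiply each summand by a Patterson slowly-varying factor $f(d(e,g))$ so that the modified series diverges at the critical exponent; the volume estimate of Lemma \ref{lem:conformal}(1) ensures that Patterson's trick is applicable. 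Direct change of variables gives the prelimit equivariance and exact conformality
$$g_*\nu_{x,s} = \nu_{gx,s}, \qquad \frac{d\nu_{y,s}}{d\nu_{x,s}}(h) = e^{-s(d(y,h)-d(x,h))}.$$

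Next, pass to a weak$^*$ subsequential limit $\nu_x := \lim_n \nu_{x,s_n}$ along $s_n \searrow \fh$ on the compactification $\overline{\Gamma}$. The divergence of the modified Poincar\'e series at $\fh$ combined with Lemma \ref{lem:conformal}(1) forces mass to escape every finite ball, so $\nu_x$ is supported on $\partial \Gamma$; the equivariance $g_* \nu_x = \nu_{gx}$ passes to the limit. The hypothesis that the horofunction boundary coincides with the Gromov boundary is used precisely here: it says that for each fixed $x,y \in \Gamma$ the function $z \mapsto d(y,z) - d(x,z)$ extends continuously from $\Gamma$ to $\overline{\Gamma}$, with value $h_\xi(y) - h_\xi(x)$ at $\xi \in \partial \Gamma$. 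Continuity of this extension lets one transfer the conformality identity to the limit: testing against any $\varphi \in C(\overline{\Gamma})$,
$$\int_{\partial\Gamma} \varphi(\xi)\, e^{-\fh(h_\xi(y)-h_\xi(x))}\,d\nu_x(\xi) = \int_{\partial\Gamma} \varphi(\xi)\,d\nu_y(\xi),$$
whence $d\nu_y/d\nu_x(\xi) = e^{-\fh(h_\xi(y)-h_\xi(x))}$ almost everywhere.

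Finally, set $\nu_c := \nu_e$. By the limiting equivariance, $\nu_c \circ g = (g^{-1})_*\nu_c = \nu_{g^{-1}}$, so taking $x = e$ and $y = g^{-1}$ in the formula just derived yields
$$\frac{d\nu_c \circ g}{d\nu_c}(\xi) = \frac{d\nu_{g^{-1}}}{d\nu_e}(\xi) = e^{-\fh\, h_\xi(g^{-1})},$$
which is the desired conformal relation.

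The main obstacle will be the passage of the Radon--Nikodym identity from finite $s$ to the weak$^*$ limit: since weak$^*$ convergence does not generally commute with multiplication by merely measurable functions, one genuinely needs the exponent $d(y,\cdot) - d(x,\cdot)$ to converge to a continuous function on $\partial \Gamma$. The boundary coincidence hypothesis is essentially the only nontrivial input required to supply this continuous extension; without it, limits along different approximating sequences inside $\Gamma$ could disagree and one would recover only the quasi-conformal measure of Lemma \ref{lem:conformal}, whose multiplicative distortion could not be removed.
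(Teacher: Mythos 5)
Your proof is correct and follows essentially the same route as the paper: the Patterson--Sullivan construction, a weak$^*$ limit as $s\searrow \fh$ supported on $\partial\Gamma$, and passage of the exact prelimit conformality to the limit via the continuous extension of $z\mapsto d(y,z)-d(x,z)$ supplied by the boundary-coincidence hypothesis (the paper leaves this last verification as an exercise, which you have filled in). One small simplification: the lower volume bound of Lemma \ref{lem:conformal}(1) already forces the Poincar\'e series to diverge at $s=\fh$, so the Patterson slowly-varying correction you mention as a fallback is never needed.
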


\begin{proof}
For $s>0$ and $\gamma \in \Gamma$, let 
$$Z_s(\gamma):= \sum_{g\in \Gamma} e^{-s d(\gamma,g)}.$$
By Lemma \ref{lem:conformal}, there exist constants $C_0,a>0$ so that if
$N_k=|\{g \in \Gamma:~ ak \le d(e,g) < a(k+1)\}|$ then 
$$ C_0^{-1} e^{\fh ak} \le N_k \le C_0 e^{\fh ak}.$$ 
So there is a constant $C_1>0$ such that
$$C_1^{-1} \sum_{k=0}^\infty N_k e^{-ska}\le Z_s(\gamma) \le C_1 \sum_{k=0}^\infty N_k e^{-ska}.$$
So if $s > \fh$,
$$\frac{C_0^{-1}C_1^{-1}}{1-e^{a(\fh-s)}} \le Z_s(\gamma) \le  \frac{C_0C_1}{1-e^{a(\fh-s)}}.$$
For $s>\fh$ let
$$m_s := \frac{1}{Z(s)} \sum_{g\in \Gamma} e^{-s d(e,g)}\delta_g$$
where $\delta_g$ is the Dirac measure concentrated on $\{g\} \subset \Gamma$. We consider these as measures on $\overline{\Gamma}=\Gamma \cup \partial \Gamma$. Let $\nu_c$ be any weak* limit of $m_s$ as $s \searrow \fh$. Because $\lim_{s \searrow \fh} Z(s)=+\infty$, $\nu_c$ is supported on $\partial \Gamma$. An exercise left to the reader shows that $\nu_c$ is conformal as claimed.

\end{proof}

\begin{remark}
If $\lambda \in (0,1)$ then we do not know whether there exists a conformal measure $\nu_c$ on $\partial \Gamma$ which in addition satisfies $\log_\lambda\left(\frac{d\nu_c \circ g}{d\nu_c}(\xi)\right) \in \ZZ$ for every $g \in \Gamma$ and a.e. $\xi \in \partial \Gamma$. However, Lemma \ref{lem:qc-type} implies that $\nu_c$ is equivalent to a quasi-conformal measure $\nu$ which satisfies this condition.
\end{remark}

\begin{lem}
Let $(\Gamma,d)$ be a non-elementary uniformly quasi-geodesic  hyperbolic group whose Gromov boundary coincides with its horofunction boundary and $\nu$ be a quasi-conformal measure on $\partial \Gamma$. Then $\Gamma \cc (\partial \Gamma,\nu)$ is weakly mixing, type $III_\lambda$ and stable type $III_\tau$ for some $\lambda,\tau \in (0,1]$.
\end{lem}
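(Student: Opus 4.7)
The plan is to verify each of the three conclusions—weak mixing, type $III_\lambda$, and stable type $III_\tau$—by appealing to results already in the literature, after checking that their hypotheses are satisfied under our standing assumptions.

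First I would address the type and stable type statements, which essentially reduce to direct citations of \cite{Bo12}. The results in that paper establish that for a non-elementary Gromov hyperbolic group acting on its Gromov boundary with a quasi-conformal measure, the action is of type $III_\lambda$ for some $\lambda \in (0,1]$ and its stable type is $III_\tau$ for some $\tau \in (0,1]$. The hypotheses needed are met in our setting since $(\Gamma,d)$ is non-elementary, uniformly quasi-geodesic, and hyperbolic, and $\nu$ is quasi-conformal by assumption (cf.\ Lemma \ref{lem:conformal}). Note that this is also consistent with the explicit use of \cite{Bo12} earlier in the paper, in \S\ref{sec:type}.

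For weak mixing, the strategy is to identify $\Gamma \cc (\partial \Gamma, \nu)$ with a Poisson boundary action and then invoke the general fact that actions on Poisson boundaries are weakly mixing. The identification step uses the Connell--Muchnik theorem \cite{CM07}: for a word hyperbolic group with a quasi-conformal measure on its Gromov boundary, the boundary action is measurably isomorphic, as a measure-class-preserving action, to the Poisson boundary of some admissible random walk on $\Gamma$. Since by Lemma \ref{lem:conformal} any two quasi-conformal measures on $\partial \Gamma$ lie in the same measure class, and weak mixing depends only on the measure class of the action, it suffices to verify the property for a canonical conformal measure—such as the one produced in Lemma \ref{lem:conformal2}—that falls directly under the scope of \cite{CM07}. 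Once the identification with a Poisson boundary is in hand, the Aaronson--Lema\'nczyk theorem \cite{AL05}, which asserts that Poisson boundary actions of countable groups are weakly mixing, concludes the proof.

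The main technical issue, and the place where care is needed, is the verification of the hypotheses of \cite{CM07} in our slightly weaker framework: we allow $(\Gamma,d)$ to be merely uniformly quasi-geodesic rather than geodesic, and our metric need not be a word metric. Here the standing assumption that the Gromov boundary coincides with the horofunction boundary is crucial, because it ensures that the horofunctions $h_\xi$ are genuinely defined on $\partial \Gamma$ (not just up to bounded error or along subsequences), so that the Patterson--Sullivan type constructions and Martin boundary identifications used in \cite{CM07} go through in essentially the same form. Modulo this reduction—standard given the uniqueness of the quasi-conformal measure class—the proof is simply the combination of \cite{CM07}, \cite{AL05}, and \cite{Bo12}.
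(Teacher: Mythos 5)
Your proposal is correct and follows essentially the same route as the paper: reduce to a conformal measure using Lemma \ref{lem:conformal} (equivalence of quasi-conformal measures) and Lemma \ref{lem:conformal2}, invoke \cite[Corollary 0.2]{CM07} to realize the action as (a factor of) a Poisson boundary, apply \cite{AL05} for weak mixing, and cite \cite{Bo12} for the type and stable type. The only cosmetic difference is that the paper only needs the action to be a \emph{factor} of a Poisson boundary rather than isomorphic to one, which suffices since factors of weakly mixing actions are weakly mixing.
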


\begin{proof}
By Lemma \ref{lem:conformal} any two quasi-conformal measures on $\partial \Gamma$ are absolutely continuous to each other. So by Lemma \ref{lem:conformal2} we may assume $\nu$ is conformal. So the Radon-Nikodym derivatives $\frac{d\nu \circ g}{d\nu}$ are continuous. By  \cite[Corollary 0.2]{CM07}, $\Gamma \cc (\partial \Gamma,\nu)$ is a factor of a Poisson boundary. By \cite{AL05}, the action of $\Gamma$ on any of its Poisson boundaries is weakly mixing. This implies $\Gamma \cc (\partial \Gamma,\nu)$ is weakly mixing. The main theorem of \cite{Bo12} implies $\Gamma \cc (\partial \Gamma,\nu)$ is type $III_\lambda$ and stable type $III_\tau$ for some $\lambda,\tau \in (0,1]$.
\end{proof}

We now combine Theorems \ref{thm:III_1}, \ref{thm:III_lambda}, Corollary \ref{cor:regular} and Theorem \ref{thm:asymptoticinvariance} to prove Theorem \ref{thm:main}.

\begin{proof}[Proof of Theorem \ref{thm:main}]
Let $a,T \ge 0$ be sufficiently large so that the conclusions to Corollary \ref{cor:regular} and Theorem \ref{thm:asymptoticinvariance} hold. Also let $\epsilon_0>0$.

We use notation as in \S \ref{sec:type}. So let $E$ be the equivalence relation on $\partial \Gamma \times [0,T)_\L$ induced by the partial action of $\Gamma$. Let $\epsilon(\xi,t,r)$, $\tcS_a=\{\tcS_{r,a}\}_{r>0}$ be as in Theorem \ref{thm:asymptoticinvariance}. By Corollary \ref{cor:regular} and Theorem \ref{thm:asymptoticinvariance}, $\tcS_a$ is regular and asymptotically invariant.

Suppose that $\Gamma \cc (\partial \Gamma, \nu)$ has type $III_1$ (so $\L=\R, \lambda=1$). Let
$$\zeta_r(g) = T^{-1} \int_0^T \int |\{
w\in \Gamma:~w(\xi,t)\in \cS_{r,a}(\xi,t)\}|^{-1}1_{\cS_{r,a}(\xi,t)}(g^{-1}(\xi,t))~d\nu(\xi) dt.$$
From (\ref{eqn:R-h}) we have that if $(\xi,t), g^{-1}(\xi,t) \in \partial \Gamma \times [0,T)_\L$ then
$$|h_\xi(g)| \le \fh_\lambda^{-1}(T+C)$$
for some constant $C>0$. Since $r-a < d(e,g) - h_\xi(g)-t \le r$, we have
$$r-a-\fh_\lambda^{-1}(T+C)< d(e,g) \le r +\fh_\lambda^{-1}(T+C) + T.$$
So
$$\{g \in \Gamma:~ g^{-1}(\xi,t) \in \cS_{r,a}(\xi,t)\} \subset B(e,r+\rho) \setminus B(e,r-\rho)$$
where $\rho = a +\fh_\lambda^{-1}(T+C) + T$, which implies $\zeta_r$ is supported on $B(e,r+\rho) \setminus B(e,r-\rho)$. 

The previous lemma and Theorem \ref{thm:III_1} imply $\{\zeta_r\}$ is a pointwise ergodic family in $L\log L$. This finishes the type $III_1$ case. The type $III_\lambda$ case ($\lambda \in (0,1)$) is similar, using Theorem \ref{thm:III_lambda} instead of \ref{thm:III_1}.
\end{proof}

\linespread{0}
{\small

}

\end{document}